\theoremstyle{plain}
\newtheorem{theorem}{Theorem}
\newtheorem{definition}[theorem]{Definition}
\newtheorem{lemma}[theorem]{Lemma}
\newtheorem{proposition}[theorem]{Proposition}
\newtheorem{corollary}[theorem]{Corollary}
\newtheorem{example}[theorem]{Example}
\newtheorem{remark}[theorem]{Remark}
\newtheorem{question}[theorem]{Question}
\newcommand\es{\varnothing}
\newcommand\wt{\widetilde}
\newcommand\ol{\overline}
\newcommand\Aut{\mathrm{Aut}}
\newcommand\sH{{\mathcal H}}
\newcommand\HH{{\mathbb H}}
\newcommand\RR{{\mathbb R}}
\newcommand\ZZ{{\mathbb Z}}
\newcommand\NN{{\mathbb N}}
\newcommand\WW{{\mathbb W}}
\newcommand\LL{{\mathbb L}}
\newcommand\TT{{\mathbb T}}
\newcommand\LLp{\LL_+}
\renewcommand\AA{{\mathbb A}}
\newcommand\TLF{\sT}
\newcommand\F{\mathrm{F}}
\renewcommand\a{\alpha}
\newcommand\om{\omega}
\newcommand\g{\gamma}
\newcommand\be{\beta}
\newcommand\si{\sigma}
\newcommand\De{\Delta}
\newcommand\qq{\qquad}
\newcommand\q{\quad}
\newcommand\resp{respectively}
\newcommand\oo{\infty}
\newcommand\sG{{\mathcal G}}
\newcommand\sF{{\mathcal F}}
\newcommand\sP{{\mathcal P}}
\newcommand\sQ{{\mathcal Q}}
\newcommand\sT{{\mathcal T}}
\newcommand\sN{{\mathcal N}}
\newcommand\sS{{\mathcal S}}
\newcommand\sW{{\mathcal W}}
\newcommand\Ga{\Gamma}
\newcommand\Si{\Sigma}
\newcommand\de{\delta}
\newcommand\id{{\bf 1}}
\newcommand\rd{{\mathrm{d}}}
\newcommand\e{\mathrm{e}}
\newcommand\nea{\mathrm{ne}}
\newcommand\nw{\mathrm{nw}}
\newcommand\w{\mathrm{w}}
\newcommand\sw{\mathrm{sw}}
\newcommand\se{\mathrm{se}}
\renewcommand\ell{l}
\newcommand\pd{\partial}
\newcommand\ghf{graph height function}
\newcommand\GHF{group height function}
\newcommand\hdi{$\sH$-difference-invariant}
\newcommand\normal{\trianglelefteq}
\newcommand\wtilde{\widetilde}
\DeclareMathOperator*{\argmax}{argmax}
\newcommand\qua{quadrilateral}
\newcommand\Qh{\sQ_{\mathrm{harm}}}
\newcommand\sgn{{\mathrm {sign}}}
\newcommand\smi{\setminus}
\newcommand\arc[1]{\llbracket #1\rrbracket}
\newcounter{mycount1}\newcounter{mycount2}\newcounter{mycount3}\newcounter{mycount}
\newenvironment{romlist}{\begin{list}{\rm(\roman{mycount1})}%
   {\usecounter{mycount1}\labelwidth=1cm\itemsep 0pt}}{\end{list}}
\newenvironment{romlistp}{\begin{list}{\rm(\roman{mycount1}$'$)}%
   {\usecounter{mycount1}\labelwidth=1cm\itemsep 0pt}}{\end{list}}
\newenvironment{numlist}{\begin{list}{\rm\arabic{mycount2}.}%
   {\usecounter{mycount2}\labelwidth=1cm\itemsep 0pt}}{\end{list}}
\newenvironment{letlist}{\begin{list}{\rm(\alph{mycount3})}%
   {\usecounter{mycount3}\labelwidth=1cm\itemsep 0pt}}{\end{list}}
\newenvironment{Alist}{\begin{list}{\rm\MakeUppercase{\alph{mycount}}.}%
   {\usecounter{mycount}\labelwidth=1cm\itemsep 0pt}}{\end{list}}
\numberwithin{equation}{section}
\numberwithin{theorem}{section}
\numberwithin{figure}{section}
\title{Cubic graphs and the golden mean}
\author{Geoffrey R.\ Grimmett}
\address{Statistical Laboratory, Centre for
Mathematical Sciences, Cambridge University, Wilberforce Road,
Cambridge CB3 0WB, UK} 
\address{School of Mathematics \&\ Statistics, The University of Melbourne, 
Parkville, VIC 3010, Australia}
\email{g.r.grimmett@statslab.cam.ac.uk}
\urladdr{\url{http://www.statslab.cam.ac.uk/~grg/}}
\address{Department of Mathematics,
University of Connecticut,
Storrs, Connecticut 06269-3009, USA} \email{zhongyang.li@uconn.edu}
\urladdr{\url{http://www.math.uconn.edu/~zhongyang/}}
\author{Zhongyang Li}
\begin{document}

\begin{abstract}
The connective constant $\mu(G)$ of a graph $G$ is the exponential growth rate
of the number of self-avoiding walks starting at a given vertex. We investigate
the validity of the inequality $\mu \ge \phi$ for 
infinite, transitive, simple, cubic graphs, where $\phi:= \frac12(1+\sqrt 5)$ is
the golden mean. The inequality is proved for
several families of graphs including (i)  Cayley
graphs  of infinite groups with three generators and strictly positive first Betti number,  (ii)
infinite, transitive, topologically
locally finite (TLF) planar, cubic graphs, and (iii) cubic Cayley graphs with two ends. 
Bounds for $\mu$ are presented for transitive cubic graphs
with girth either $3$ or $4$, and 
for certain quasi-transitive cubic graphs.
\end{abstract}

\date{19 November 2016;  this revision 10 August 2019}

\keywords{Self-avoiding walk, connective constant, cubic graph,
vertex-transitive graph, quasi-transitive graph, girth, Grigorchuk group, TLF-planar graph,
Cayley graph, graph ends}
\subjclass[2010]{05C30, 20F65, 60K35, 82B20}
\maketitle

\section{Introduction}\label{sec:int}

Let $G$ be an infinite, transitive, simple, rooted graph, 
and let $\si_n$ be the number of $n$-step self-avoiding walks (SAWs) starting from the root.
It was proved by Hammersley \cite{jmhII} in 1957 that the limit
$\mu=\mu(G):=\lim_{n\to\oo}\si_n^{1/n}$ exists, 
and he called it the \lq connective constant' of $G$. 
A great deal of attention has been devoted to counting SAWs since that introductory mathematics paper, 
and survey accounts of many of the main features of the theory may be found at \cite{bdgs,GL-SAACC, ms}.

A graph is called \emph{cubic} if every vertex has degree $3$, and \emph{transitive} if it is vertex-transitive
(further definitions will be given in Section \ref{sec:def}).
Let $\sG_d$ be the set of infinite, transitive, simple graphs with degree $d$, and
let $\mu(G)$ denote the connective constant of $G \in \sG_d$. 
The letter $\phi$ is used throughout this paper to denote the golden mean $\phi:=\frac12(1+\sqrt 5)$, with numerical value
$1.618\cdots$. The basic question to be investigated here is as follows. 

\begin{question}[\cite{GL-Comb}] \label{qn:big}
Is it the case that $\mu(G) \ge \phi$ for $G \in \sG_3$?
\end{question}

This question has arisen within the study by the current authors
of the properties of connective constants of transitive graphs, see \cite{GL-SAACC} 
and the references therein.  The question is answered affirmatively here for 
certain subsets of $\sG_3$, but we have no complete answer to Question \ref{qn:big}. 
Note that, for $d \ge 4$, 
\begin{equation}\label{eq:lowerbound}
\mu(G)\ge \sqrt{d-1}> \phi, \qq G\in \sG_d,
\end{equation}
by \cite[Thm 1.1]{GL-Comb}.

Here is some motivation for the inequality $\mu(G)\ge \phi$ for $G \in \sG_3$.
It is well known and easily proved that the ladder  $\LL$ (see Figure \ref{fig:2}) has connective
constant $\phi$. Moreover, the number of $n$-step SAWs can be expressed in terms
of the Fibonacci sequence  (an explicit such formula is given in \cite{Zeil}).
It follows that $\mu(G)\ge \phi$ whenever there exists an injection from
a sufficiently large set of rooted $n$-step SAWs on 
$\LL$ to the corresponding set on $G$.
As domain for such injections, we take the set $\WW_n$ of $n$-step \lq eastward'
SAWs on the singly infinite ladder $\LLp$ of Figure \ref{fig:2} (see Section \ref{sec:proof1}).
One of the principal techniques of this article is to construct such injections for
certain families of cubic graphs $G$. 
We state some of our main results next,
and refer the reader to the appropriate sections for the precise terminology
in use.

\begin{theorem}\label{thm:summary}
Let $\sG_3$ be the set of connected, infinite, transitive, cubic graphs.
\begin{Alist}
\item The connective constant $\mu=\mu(G)$ satisfies $\mu\ge \phi$ whenever one 
or more of the following holds:
\begin{letlist}
\item  $G \in \sG_3$ has a transitive \ghf, (Theorem \ref{thm:1}(b)),
\item $G$ is the Cayley graph of the Grigorchuk group with three generators,
(Theorem \ref{thm:Grig}),
\item $G\in \sG_3$ is topologically locally finite, (Theorem \ref{tm93}),
\item $G\in \sG_3$ is the Cayley graph of a finitely presented group
with two ends, (Theorem \ref{thm:2endgp}).
\end{letlist}
Further to the last item, if $\Ga$ is a finitely presented group with infinitely
many ends, it possesses a minimal generator set with Cayley graph $G$
satisfying $\mu(G)\ge \phi$, (Theorem \ref{thm:minimal}).
\item $G \in\sG_3$ satisfies
$$ 
\mu \begin{cases} \in (1.529,1.770) &\text{if $G$ has girth $3$},\\
\in (1.513, 1.900) &\text{if $G$ has girth $4$}.
\end{cases}
$$
(Theorems \ref{g3} and \ref{g4}.)
\end{Alist}
\end{theorem}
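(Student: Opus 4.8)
The plan is to prove the four parts of statement (A) by constructing, in each case, an injection from the set $\WW_n$ of eastward $n$-step SAWs on the singly infinite ladder $\LLp$ into the set of $n$-step SAWs on $G$ rooted at some fixed vertex. Since $|\WW_n|$ grows like $\phi^n$ (the Fibonacci count for ladder SAWs, as recalled in the introduction), such an injection immediately yields $\mu(G)\ge\phi$. So the common engine is: identify inside $G$ a combinatorial structure rich enough to host all ladder-walks simultaneously without collisions.

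\medskip

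For part (a), the hypothesis is a transitive \ghf\ $h\colon V\to\ZZ$; the idea is that such a height function partitions the (cubic) graph into level sets and, at each vertex, prescribes which incident edges go \lq up', \lq down', or \lq sideways'. One shows that every vertex has an incident edge along which $h$ strictly increases, and, by transitivity of the height function together with degree $3$, that one can read off a ladder-like skeleton: an infinite path on which $h$ is monotone, together with the \lq rungs' provided by the remaining edges. The eastward ladder walk that at step $i$ either advances or crosses a rung is then mapped to the walk in $G$ that advances along the monotone path or crosses the corresponding rung edge; distinctness of the images follows because the $h$-value and the choice of incident edge at each step are recorded faithfully. The self-avoidance of the image is the delicate point: one needs the monotone structure of $h$ to prevent the image walk from revisiting a vertex, which is exactly where cubicity (each vertex has at most one \lq sideways' edge) is used. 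This is essentially Theorem~\ref{thm:1}(b), invoked as a black box once the ladder-injection lemma is in place.

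\medskip

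Parts (b), (c), (d) are then reductions to (a) or to direct injection constructions. For the Grigorchuk group with three generators one exhibits the relevant Cayley graph explicitly, locates a ladder subgraph (or a quotient/covering argument), and checks the injection by hand; the known structure of that group makes this a finite verification. For (c), a topologically locally finite cubic graph, one uses planarity plus transitivity: a TLF-planar cubic graph either has a face structure forcing a ladder (when some face is a quadrilateral or the graph is a \lq prism-like' strip) or, when girth is large, a crude bound $\mu\ge\sqrt2>\phi$-type estimate via counting non-backtracking walks already suffices; so the argument splits on girth, matching the structure of statement (B). For (d), a finitely presented group with two ends, the standard structure theory says $G$ is (virtually) a line-like graph — quasi-isometric to $\ZZ$ — so $G$ is a finite \lq bundle' over $\ZZ$; one then constructs a transitive \ghf\ by projecting to the $\ZZ$-factor and appeals to (a). The final addendum (Theorem~\ref{thm:minimal}) for infinitely-many-ends groups is not of the same type: here one is allowed to \emph{choose} the generating set, so one picks three generators whose Cayley graph visibly contains a ladder (e.g.\ by ensuring one generator has infinite order and a suitable commutation or near-commutation with the group structure coming from the free-product / Stallings decomposition).

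\medskip

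The main obstacle, I expect, is verifying self-avoidance of the image walks in part (a) with full generality: the height function gives a global $\ZZ$-grading, but two rungs at far-apart levels could a priori be incident to the same vertex if the graph \lq wraps around', and ruling this out requires carefully using transitivity of $h$ (not merely its existence) together with the degree constraint to show the ladder skeleton is genuinely a tree-like strip rather than something that closes up. The reductions in (b)–(d) are comparatively routine once (a) and the $\WW_n$-counting are available, with the ends-based structure theorems doing the heavy lifting; the only real work there is making the height function explicit in each case.
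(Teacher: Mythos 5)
Your common engine -- injecting the eastward ladder walks $\WW_n$ into the SAWs of $G$ and using $|\WW_n|\sim\phi^n$ -- is indeed the paper's central technique, and your sketch of part (a) is broadly in line with the proof of Theorem \ref{thm:1}(b). But several of your reductions contain genuine gaps. For the Grigorchuk group (item (b)) your plan to ``locate a ladder subgraph'' and do ``a finite verification'' cannot work as stated: this Cayley graph admits \emph{no} \ghf\ at all (see the reference to \cite[Thm 5.1]{GL-amen} in Section \ref{sec:ghf}), and the actual argument (Theorem \ref{thm:Grig}) is an infinite construction that lifts walks on the orbital Schreier graph of the ray $1^\oo$ to SAWs on $G$, augments them by rerouting around quadrilaterals, and compares the resulting generating function with blocks $Z_2(\zeta)=\zeta^2+2\zeta^3+\zeta^4$ satisfying $Z_2(1/\phi)=1$; verifying self-avoidance of the lifts is the substance of the proof and is not a finite check. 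For TLF-planar graphs (item (c)) your fallback ``a crude bound $\mu\ge\sqrt2>\phi$-type estimate suffices when girth is large'' is numerically false, since $\sqrt2\approx1.414<\phi$; no crude non-backtracking bound beats $\phi$ for cubic graphs (that is exactly why Question \ref{qn:big} is nontrivial), and Theorem \ref{tm93} instead proceeds by a case analysis over type-vectors, encoding H/V letters as right/left turns and using the discrete Gauss--Bonnet-type Lemmas \ref{l95}--\ref{l96} to exclude cycles, together with Fisher-transformation reductions for small faces.

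For two-ended groups (item (d)) your proposed reduction -- project to the $\ZZ$-factor and obtain a transitive \ghf, then cite (a) -- fails at a specific point: elements of $\Ga$ that swap the two ends negate height differences, so the projected (or coset-averaged) function is only \emph{skew}-difference-invariant, as in \eqref{eq:skew}, not \hdi\ for a transitively acting group; indeed groups such as infinite dihedral-type examples admit no surjection onto $\ZZ$. The paper's proof of Theorem \ref{thm:2endgp} therefore does not go through (a) at all, but builds a harmonic, sign-twisted function $g$ and then constructs maximal SAWs directly, with a separate Fibonacci-type count in each of the three cases for the increment vector $(a,b,c)$ with $a+b+c=0$. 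Similarly, Theorem \ref{thm:minimal} is not proved by exhibiting a visible ladder, but via Stallings' splitting, the normal form Theorem \ref{l23} for amalgamated products, and the case analysis of Theorem \ref{moe}. Finally, your proposal omits part (B) entirely: the bounds $(1.529,1.770)$ and $(1.513,1.900)$ require showing that a girth-$3$ (resp.\ girth-$4$) graph in $\sG_3$ is the Fisher graph of a graph in $\sG_3$ (resp.\ $\sG_4$), as in Lemmas \ref{uc3} and \ref{up4}, combined with Proposition \ref{lem:f4} and the sharp upper bound via covering by free-product graphs in Theorem \ref{g5}; none of this is addressed by the ladder-injection engine alone.
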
  

There are many infinite, transitive, cubic graphs, and we are unaware of a complete taxonomy.
Various examples and constructions are described in Section \ref{ex:ex}
(including the illustrious case of the hexagonal lattice, see \cite{ds}), and 
the inequality $\mu\ge\phi$ is discussed in each case. 
In our search for cubic graphs,
no counterexample has been knowingly revealed. 
Our arguments can frequently be refined to obtain stronger lower bounds for connective constants than $\phi$,
but we do not explore that here.

A substantial family of cubic graphs arises through the application of
the so-called \lq Fisher transformation' to a $d$-regular graph (see
Section \ref{sec:girth}). 
We make explicit mention of the Fisher transformation here since
it provides a useful technique in the study of connective constants.  

The family of Cayley graphs provides a set of transitive graphs of special interest and structure.
The Cayley graph of the Grigorchuk group
is studied by a tailored argument in Section \ref{sec:Grig}.
In Section \ref{sec:ends}, we treat $2$-ended Cayley graphs   
and certain $\oo$-ended Cayley graphs.

We make a final note concerning the numbers $\phi$ and $\sqrt 2$, in their roles in  
Question \ref{qn:big}, and in \eqref{eq:lowerbound} with $d=3$.
Bucher and Talambutsa \cite{BT1,BT2} have derived lower bounds and equalities for 
exponential growth rates of non-trivial free and amalgamated products. 
In particular, they show
there is a gap between $\sqrt 2$ and $\phi$ for the growth rates 
of free products.
They are able to study the infimum growth rate
of free groups over all  generator sets. 
It is elementary that the infimum
growth rate is a lower bound for the connective constants
of the corresponding Cayley graphs (studied in \cite{Gilch}).

This paper is structured as follows.
General criteria that imply $\mu\ge\phi$ are presented in Section \ref{sec:ineq}
and proved in Section \ref{sec:proof1}.
In Section \ref{ex:ex} is given a list of cubic graphs known to
satisfy $\mu\ge \phi$
(for some such graphs, the inequality follows from earlier
results as noted, and for others by the results of the current article). 
So-called \emph{transitive} graph height functions are discussed
in Section \ref{sec:ghf}, including sufficient conditions for their existence.
Upper and lower bounds for connective constants for cubic graphs with
girth $3$ or $4$ are stated and proved in Section \ref{sec:girth}. 
The Grigorchuk group  is considered in Section \ref{sec:Grig}.
In Section \ref{sec:tlf}, it is proved that $\mu\ge \phi$ for
all transitive, topologically locally finite (TLF) planar, cubic graphs. The final Section \ref{sec:ends}
is devoted to $2$- and $\oo$-ended Cayley graphs.
(Theorem \ref{thm:Grig} and much of the proof of Theorem \ref{thm:2endgp}
are due to Anton Malyshev (personal communication).)

\section{Preliminaries}\label{sec:def}

The graphs $G=(V,E)$ of this paper will be assumed to be connected, infinite, and simple
(parallel edges will make a brief appearance in and around Proposition \ref{lem:f4}).
We write $u \sim v$ if $\langle u,v\rangle \in E$, and say that $u$ and $v$ are \emph{neighbours}.
The set of neighbours of $v\in V$ is denoted $\pd v$.
The \emph{degree} $\deg(v)$ of vertex $v$ is the number of edges
incident to $v$, and $G$ is called \emph{cubic} if $\deg(v)=3$ for $v\in V$.  

The automorphism group of  $G$ is
written $\Aut(G)$. A subgroup $\Ga \le \Aut(G)$ is said to 
\emph{act transitively} 
if, for $v,w\in V$, there exists $\g \in \Ga$ with $\g v=w$.
It   \emph{acts quasi-transitively} if there is a finite
subset $W\subseteq V$ such that, for $v \in V$, there exist
$w \in W$ and $\g \in \Ga$ with $\g v =w$.
The graph is called \emph{(vertex-)transitive} 
(\resp, \emph{quasi-transitive}) if $\Aut(G)$ acts transitively
(\resp, quasi-transitively). 

A \emph{walk} $w$ on the (simple) graph $G$ is
a sequence $w=(w_0,w_1,\dots, w_n)$ of vertices $w_i$ such that $n \ge 0$ and
$e_i=\langle w_i, w_{i+1}\rangle\in E$ for $i \ge 0$. The \emph{length} $|w|$ of a walk $w$ 
is the number of its edges, and $w$ is called \emph{closed}
if $w_0=w_n$. The distance $d_G(v,w)$ between vertices $v$, 
$w$ is the length of the shortest walk of $G$ between them.

An \emph{$n$-step self-avoiding walk} (SAW) 
on $G$ is  a walk $w=(w_0,w_1,\dots, w_n)$ of length $n\ge 0$ with no repeated vertices.
The walk $w$ is
called \emph{non-backtracking} if $w_{i+1}\ne w_{i-1}$ for $i \ge 1$.
A \emph{cycle} is a walk $(w_0,w_1,\dots, w_n)$
with $n \ge 3$ such that $w_0=w_n$, and $w_i \ne w_j$ for $0 \le i < j < n$.
Note that a cycle has a specified orientation.
The \emph{girth} of $G$ is the length of its shortest cycle.
A \emph{triangle} (\resp, \emph{\qua}) is a 
cycle of length $3$ (\resp, $4$).

We denote by $\sG$ (\resp, $\sQ$) the set of infinite, rooted, connected, transitive (\resp, quasi-transitive), 
simple graphs 
with finite vertex-degrees.
The subset of $\sG$ (\resp, $\sQ$)  containing graphs with degree $d$ is denoted $\sG_d$
(\resp, $\sQ_d$), and the subset of $\sG_d$ (\resp, $\sQ_d$)  containing graphs with girth $g$ is denoted 
$\sG_{d,g}$ (\resp, $\sQ_{d,g}$) .
The root of such graphs is denoted $0$ (or $\id$ when the graph is a Cayley graph of a group
with identity $\id$).

Let $\Si_n(v)$ be the set of $n$-step SAWs starting at $v\in V$, and
$\si_n(v):=|\Si_n(v)|$ its cardinality.
Let $G\in \sQ$.
It is proved in \cite{jmhII,hm}  that the limit
\begin{equation}\label{def:cc}
\mu=\mu(G) :=\lim_{n\to\oo} \si_n(v)^{1/n}, \qq v \in V,
\end{equation}
exists, and $\mu(G)$ is called the \emph{connective constant} of $G$.
We shall have use for the SAW \emph{generating function}
\begin{equation}\label{eq:genfn}
Z_v(\zeta) = \sum_{\substack{\pi\text{ a SAW}\\\text{from $v$}}} \zeta^{|\pi|}
= \sum_{n=0}^\oo \si_n(v) \zeta^n,
\qq v \in V,\ \zeta \in \RR.
\end{equation}
By \eqref{def:cc}, each $Z_v$ has radius of convergence $1/\mu(G)$.
We shall sometimes consider SAWs joining midpoints of edges of $G$
(in the manner of \cite{ds,GrL2}).

There are two (related) types of graph functions relevant to this work.
We recall first the definition of a \lq\ghf', as introduced in \cite{GL-loc} in the context
of connective constants.

\begin{definition}[\cite{GL-loc}] \label{def:height}
Let $G \in \sQ$. A \emph{\ghf} on $G$ is a pair $(h,\sH)$ such that:
\begin{letlist}
\item $h:V \to \ZZ$ and $h(0)=0$, 
\item $\sH$ is a subgroup of $\Aut(G)$ acting quasi-transitively on $G$ 
such that $h$ is \emph{\hdi} in the sense that
$$
h(\a v) - h(\a u) = h(v) - h(u), \qq \a \in \sH,\ u,v \in V,
$$
\item for  $v\in V$, there exist $u,w \in \pd v$ such that
$h(u) < h(v) < h(w)$.
\end{letlist}
A \ghf\ $(h,\sH)$ of $G$ is called \emph{transitive} if $\sH$ acts transitively on $G$.
\end{definition}

The properties of normality and unimodularity of the group $\sH$ 
are discussed in \cite{GL-loc}, but do not appear to be especially relevant 
to the current work.

Secondly, we remind the reader of
the definition of a harmonic function on a graph $G=(V,E)$.
A function $h:V \to\RR$ is called \emph{harmonic} if
$$
h(v) = \frac1{\deg(v)}\sum_{u \sim v} h(u), \qq v \in V.
$$

Cayley graphs of finitely generated groups (with symmetric generator sets) make appearances in this
paper, and the reader is referred to \cite{GL-Cayley,  GL-amen} for background 
material on such graphs.  We denote by $\id$ the identity of any group $\Ga$
under consideration.

\section{General results}\label{sec:ineq}

Let $G=(V,E)$ be an infinite, connected graph. For $h:V\to\RR$,
we define two functions
$m:V\to V$ and $M:V\to\RR$, depending on $h$, by
\begin{equation}\label{eq:0}
m(u)\in \argmax\{h(x)-h(u): x \sim u\}, \q
M_u=h(m(u))-h(u), \qq u \in V.
\end{equation}
There may be more than one candidate vertex for $m(u)$, and hence more than
one possible value for the term $M_{m(u)}$. 
If so, we make a choice for the value $m(u)$, and we fix $m(u)$ thereafter.
Let $q(v)$ denote the unique neighbour of $v:=m(u)$ other than $u$ and $m(v)$.
We shall apply the functions $m$ and $q$ repeatedly, and shall omit parentheses in that,  
for example, $mqm(u)$ denotes the vertex $m(q(m(u)))$,
and $(qm)^2(u)$ denotes $qmqm(u)$.
This notation is illustrated in Figure \ref{fig:1}. 

\begin{figure}
\centerline{\includegraphics[width=0.4\textwidth]{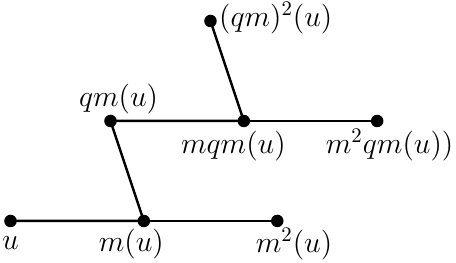}}
\caption{An illustration of the notation of equations \eqref{eq:1}--\eqref{eq:2}.}
\label{fig:1}
\end{figure}

Let $\Qh\subseteq \sQ_3$
be the subset of graphs $G$ with the following properties:
there exists $h:V\to\RR$ such that $h$ is harmonic and,
for $u \in V$,
\begin{align}\label{eq:1}
M_{m(u)}-M_u &< \min\{M_u,M_{qm(u)}\}, \\
2M_{qm(u) }&> M_{m(u)}-M_u +M_{mqm(u)}.\label{eq:2}
\end{align}
Although inequalities \eqref{eq:1} and \eqref{eq:2} lack obvious motivation,
they turn out to be useful (see Theorem \ref{thm:1}) in 
establishing certain cases of the inequality $\mu(G)\ge \phi$. 
We note two consequences of \eqref{eq:1} and \eqref{eq:2}:
\begin{letlist}
\item since $h$ is assumed harmonic, we have $M_u \ge 0$ for $u \in V$, and hence
$M_u>0$ by \eqref{eq:1},
\item it is proved at \eqref{eq:1new} that, subject to \eqref{eq:1} and \eqref{eq:2},
$$h(qm(u)) >h(u),\ h(mqm(u)) > h(m(u)),\ h((qm)^2(u)) > h(m(u)),
$$ 
whence $qm(u)\ne u$, $mqm(u)\ne m(u)$, $(qm)^2(u) \ne m(u)$.
\end{letlist}

Conditions \eqref{eq:1}--\eqref{eq:2} will be used in the proof of part (a) of the following theorem. 
Less obscure but still sufficient conditions are contained in Remark \ref{rem:lesser}, following.

\begin{theorem}\label{thm:1}
We have that $\mu(G) \ge \phi$ if any of the following hold.
\begin{letlist}
\item $G\in \Qh$.
\item $G\in\sG_3$ has a transitive \ghf.
\item $G\in\sQ_{3,g}$ where $g \ge 3$, and
there exists a function $h: V(G)\rightarrow\RR$ 
such that, for  $u\in V$,
\begin{gather}\label{qm}
h(qm(u))>h(u),\q h(mqm(u))>h(m(u)),\\
h((qm)^\g q(u))>h(u),\label{ag}
\end{gather}
where $\g=\lceil \frac12(g-1)\rceil$.
\item $G\in\sQ_{3,g}$ where $g \ge 3$, 
and there exists a harmonic function $h$ on $G$ 
satisfying \eqref{eq:1} and \eqref{ag}.
\end{letlist}
\end{theorem}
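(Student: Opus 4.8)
The plan is to prove all four parts of Theorem~\ref{thm:1} by constructing, for each $n$, an injection from the set $\WW_n$ of eastward $n$-step SAWs on the singly infinite ladder $\LLp$ into the set of rooted $n$-step SAWs on $G$ (or on $G$ with a bounded number of extra steps, which does not affect the exponential rate). Since $\mu(\LL)=\phi$ and $|\WW_n|$ grows like $\phi^n$, such an injection yields $\mu(G)\ge\phi$. The common engine is the observation that an eastward walk on the ladder, at each \lq rung position', either steps straight east or makes a detour using the rung; combinatorially this is encoded by a binary-type choice at each step, governed by the Fibonacci recursion. The task is to mimic this on $G$: we need, at each stage of a walk, \emph{two} distinct self-avoiding continuations that reconnect to a common subsequent vertex from which the construction can be iterated, and we need the resulting map to be injective (i.e. the continuation used must be recoverable from the image walk).

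For part~(b), the transitive \ghf\ $(h,\sH)$ supplies exactly this structure: from any vertex $v$ one can always step to a strictly higher neighbour (property~(c) of Definition~\ref{def:height}), and the \hdi ance lets one transport a fixed local configuration by group elements so that the \lq detour versus straight' dichotomy is available uniformly. I would make this precise by using $h$ to define the analogues of $m$ and $q$ and checking that iterating $m$ always strictly increases $h$, so the walk never revisits a vertex; the two-choice structure then comes from the degree-$3$ condition. Parts~(c) and~(d) are the heart of the matter: there the hypotheses \eqref{qm}, \eqref{ag} (resp.\ \eqref{eq:1}, \eqref{ag}) are precisely the inequalities on the $h$-values of the iterated vertices $qm(u)$, $mqm(u)$, $(qm)^\g q(u)$ needed to guarantee that the two candidate continuations are self-avoiding and remain disjoint for long enough — the bound $\g=\lceil\frac12(g-1)\rceil$ reflecting that a cycle of length $<g$ cannot close, so revisits are excluded once we control roughly $g/2$ iterates. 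I would show that \eqref{eq:1}--\eqref{eq:2} in the definition of $\Qh$ imply the displayed inequalities $h(qm(u))>h(u)$, $h(mqm(u))>h(m(u))$, $h((qm)^2(u))>h(m(u))$ (this is the promised equation~\eqref{eq:1new}), which is how part~(a) reduces to part~(d) with $g=3$; and more generally that harmonicity of $h$ forces $M_u\ge 0$, upgraded to $M_u>0$ by \eqref{eq:1}. Then part~(a) is the case $g=3$ of part~(d), and part~(b) reduces to part~(c) by taking $h$ to be the \ghf\ (whose defining properties directly give \eqref{qm} and \eqref{ag} since one can always keep increasing $h$).

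The core construction for parts~(c)--(d) is the inductive definition of the injection $\Phi\colon\WW_n\to\Si_{N}(0)$ for suitable $N=N(n)=\Theta(n)$. Reading an eastward ladder walk as a sequence of moves, I would map a \lq straight' move starting at the current endpoint $u$ to the single edge $u\to m(u)$, and a \lq detour' move to the two-edge (or in general $O(g)$-edge) excursion $u\to m(u)\to q(m(u))\to\cdots$, arranged so that in either case the walk arrives at a vertex lying strictly higher in $h$ than $u$, and so that the two cases arrive at \emph{different} vertices — this last point is what \eqref{ag} secures, since $(qm)^\g q(u)$ is forced above $u$ and hence distinct from the vertices visited during a straight move. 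Self-avoidance of the whole image follows because $h$ is strictly increasing along the \lq backbone' vertices $u, m(u), m(m(u)),\dots$ so no two backbone vertices coincide, while any local loop would be a cycle shorter than the girth $g$, which is impossible; the extra inequalities in \eqref{qm}/\eqref{eq:1} rule out the short-range collisions between a detour and the neighbouring backbone. Injectivity holds because from the image one can read off, at each backbone vertex, whether a detour was taken (by whether the distinguished detour edges are present), and the ladder walk is reconstructed. I expect the main obstacle to be the bookkeeping in the self-avoidance argument when $g$ is large: one must verify that none of the $O(g)$ vertices of a detour excursion collides with any vertex produced at a neighbouring step, and this is exactly the delicate role of the exponent $\g=\lceil\frac12(g-1)\rceil$ and of hypothesis \eqref{ag} — getting the counting of iterates exactly right (rather than off by one) is where care is needed. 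A secondary technical point is to confirm that $m$ and $q$ are well defined and that the whole scheme only uses the quasi-transitivity of $\Aut(G)$ through the existence of $h$, so that no genuine transitivity is required in parts~(c)--(d).
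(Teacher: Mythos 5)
Your high-level plan (inject $\WW_n$ into the rooted SAWs of $G$ and invoke \eqref{eq:3}) is the paper's plan, but two of your central steps have genuine gaps. First, the length bookkeeping. You allow the image walk to have length $N(n)=\Theta(n)$, describing a V-type move as a ``two-edge (or in general $O(g)$-edge) excursion'' and asserting that the extra steps ``do not affect the exponential rate''. They do: an injection of $\WW_n$ into $\Si_{cn}(0)$ with $c>1$ only yields $\mu\ge\phi^{1/c}<\phi$. The construction must be length-preserving, and in the paper it is: each letter of $w\in\WW_n$ is mapped to exactly one edge, H to the $m$-step and V to the $q$-step (Definition \ref{saw-def}), so $f(w)$ has exactly $n$ steps; nothing about the construction changes with the girth. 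Related to this, your requirement that the two continuations ``reconnect to a common subsequent vertex'' is neither available nor needed (it fails already on trees, which are covered by part (b)); no reconnection occurs, and injectivity is immediate from $m^2(u)\ne qm(u)$, two distinct neighbours of $m(u)$ (Lemma \ref{lem-1}). In particular your attribution of injectivity/distinguishability to \eqref{ag} is misplaced.

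Second, the self-avoidance argument, which is the real content of parts (a), (c), (d), is not supplied. What is needed is an inductive statement tracking exactly which earlier vertices are $h$-dominated by the current endpoint: after an H-step, all earlier vertices; after a V-step, all vertices at even distance back, and those at odd distance at least $2\g+1$ back (this last being where \eqref{ag}, applied along the alternating pattern VHV$\cdots$V, enters); the surviving candidates for a collision, at odd distances $1,3,\dots,2\g-1$, are then killed by the girth since $2\g-1<g$ (paper's statements $T_k$ and Lemma \ref{lem:10}; for part (a) the stronger statements $S_k$ make the girth unnecessary). You gesture at the right intuition for $\g=\lceil\tfrac12(g-1)\rceil$ but never formulate the induction, and without it the assertion that ``any local loop would be a cycle shorter than the girth'' is unsupported: a priori the walk could return to a vertex arbitrarily far back, and it is precisely the even/odd height bookkeeping that reduces all possible returns to short-cycle returns. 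Your reductions of (a) to the girth-$3$ case of (d) and of (b) to (c) are workable in spirit (the derivation of \eqref{qm} and of $h((qm)^2(u))>h(m(u))$ from harmonicity and \eqref{eq:1}--\eqref{eq:2} is the paper's \eqref{eq:5a}--\eqref{eq:6}), but for (b) you still need the case analysis on the third height increment $\eta\in\{-M,0,M\}$, since when $\eta=0$ the $q$-steps keep $h$ constant rather than increasing it, and when $\eta=-M$ one must first replace $h$ by $-h$; ``one can always keep increasing $h$'' elides this.
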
 
 
\begin{remark}\label{rem:lesser}
Condition \eqref{eq:1} holds whenever there exists  $A>0$ and 
a harmonic function $h:V\to\RR$ such that, for $u \in V$,  $A< M_u \le 2A$.
Similarly, both \eqref{eq:1} and \eqref{eq:2} hold whenever there exists  $A>0$
such that, for $u \in V$,  $2A< M_u \le 3A$.
\end{remark}

Let $\Ga$ be an infinite, finitely presented group, and let $G$ be a locally finite 
Cayley graph of $\Ga$. If there exists a surjective homomorphism 
$F$ from $\Ga$ to $\ZZ$, then $F$ is a transitive \ghf\ on $G$ (see \cite{GL-Cayley}).
Such a \ghf\ is called a \emph{\GHF}.

\begin{example}\label{examples}
Here are some examples of Theorem \ref{thm:1} in action.
\begin{letlist}
\item The hexagonal lattice $\HH$ supports a harmonic function $h$ with $M_u \equiv 1$,
so that part (a) of the theorem applies (see Remark \ref{rem:lesser}). To see this,
we embed $\HH$ into the plane as in the dashed lines of Figure \ref{4612ed}.
Let each edge have length $1$, and let $h(u)$ be the horizontal coordinate of the vertex $u$.
(The exact value of $\mu(\HH)$ was proved in \cite{ds}.)
\item The Cayley graph of a finitely presented group $\Ga=\langle S\mid R\rangle$
with $|S|=3$ has a transitive \ghf\ whenever it has a \GHF, and hence part (b) applies.
See Theorem \ref{qtt} for a sufficient condition on a transitive cubic graph
to possess a transitive \ghf.
\item The Archimedean lattice $\AA=\arc{4,6,12}$ 
lies in $\sQ_{3,4}$ and possesses a harmonic 
function satisfying \eqref{eq:1}
and \eqref{eq:2}. 
The harmonic function in question is illustrated in Figure \ref{fig:arch},
and the claimed inequalities may be checked from the figure.
See also Remark \ref{rem:arch-hex2}.
\item The inequality $\mu(\AA)\ge\phi$ may be proved also as follows. 
The lattice $\AA$ can be embedded into the plane as in the solid lines of Figure \ref{4612ed}.
As in (a) above, let $h(u)$ be the horizontal coordinate of $u$. 
By Theorem \ref{thm:1}(c), the connective constant  is at least $\phi$.
\end{letlist}
\end{example}

\begin{figure}
\centerline{\includegraphics[width=0.7\textwidth]{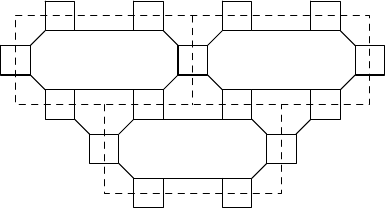}}
\caption{The dashed lines form the hexagonal lattice $\HH$, 
and the solid lines  the $\arc{4,6,12}$ lattice}  
\label{4612ed}
\end{figure}

The proof of Theorem \ref{thm:1} is found in Section \ref{sec:proof1}.

\section{Examples of infinite, transitive, cubic graphs}\label{ex:ex}

\subsection{Cubic graphs with $\mu\ge\phi$}\label{sec:exs}

We list here examples of infinite, cubic graphs $G$ with $\mu(G) \ge \phi$.
As mentioned earlier, we have no example that violates the inequality
(however, see Section \ref{sec:oq}). 
A number of these examples are well known, and 
others have been studied by other authors.
In some cases, Theorem \ref{thm:1} may be applied,
and such cases are prefixed by
the part of the theorem that applies. Most of these examples
are transitive, and all are quasi-transitive.

\begin{Alist}
\item (b) The  $3$-regular tree has connective constant $2$.

\item (a) The \lq ladder'  $\LL$ (see Figure \ref{fig:2})
has $\mu=\phi$. This exact value is elementary
and well known; see, for example, \cite[p.\ 284]{GL-Comb}.

\item\label{twistedladder} 
The \lq twisted ladder' $\TT\LL$ (see Figure \ref{fig:rotL}) has $\mu=
\sqrt{1+\sqrt 3}) \approx 1.653 > \phi$. To see this, observe that the
generating function of SAWs from $0$ (see \eqref{eq:genfn}) that move only 
eastwards or within \qua s is $Z(\zeta)=\sum_{m=0}^\oo  f(\zeta)^m$,
where $f(\zeta) = 2\zeta^2+2\zeta^4$. The radius of convergence, $1/\mu(\TT\LL)$, of $Z$ is the root of the equation $f(\zeta)=1$.

\item (a) 
The hexagonal lattice $\HH$ satisfies $\mu(\HH)\ge\phi$, by Example \ref{examples}(a).
It has been proved in \cite{ds} that  $\mu=\sqrt{2+\sqrt 2}$. 

\item (a) It is explained in \cite[Ex.\ 4.2]{GrL3} that the square/octagon lattice 
$\arc{4,8,8}$
satisfies $\mu > \phi$.

\item  (a,\,c) 
The Archimedean $\arc{4,6,12}$ lattice has connective constant at least
$\phi$. See Example \ref{examples}(c,\,d) and Remark \ref{rem:arch-hex2}.

\item (b) The Cayley graph of the lamplighter group has
a so-called \GHF, and hence a transitive \ghf. See 
Example \ref{examples}(b) and \cite[Ex.\ 5.3]{GL-Cayley}.

\item
The following examples concern so-called Fisher graphs 
(see \cite{GrL2} and Section \ref{sec:girth}). 
For $G \in \sG_3$, the \emph{Fisher graph} $G_\F$  ($\in\sQ_3$) is obtained by  
replacing each vertex by a triangle.  
It is shown at \cite[Thm 1]{GrL2} that the value of $\mu(G_\F)$ may be deduced 
from that of $\mu(G)$, and furthermore that  $\mu(G_\F) > \phi$
whenever $\mu(G) > \phi$.

\item\label{31212} In particular, the Fisher graph $\HH_\F$ 
of $\HH$ satisfies $\mu(\HH_\F) > \phi$.

\item The Archimedean lattices mentioned above are 
the hexagonal lattice $\HH=\arc{6,6,6}$, the square/octagon lattice $\arc{4,8,8}$,
together with $\arc{4,6,12}$, and $\HH_\F=\arc{3,12,12}$.
To this list we may add the ladder  $\LL=\arc{4,4,\oo}$.

These are examples of so-called transitive, TLF-planar graphs \cite{DR09}, and all such
graphs are shown in Section \ref{sec:tlf} to satisfy $\mu\ge\phi$.

\item More generally, if $G \in \sG_d$ where $d \ge 3$, and 
$$
\frac1{\mu(G)} \le 
\begin{cases} \dfrac1{\phi^{r+1}} + \dfrac1{\phi^{r+2}}&\text{if } d=2r+1,\\
\dfrac2{\phi^{r+1}} &\text{if } d=2r,
\end{cases}
$$
then its (generalized) Fisher graph satisfies $\mu(G_\F)\ge \phi$.
See Proposition \ref{lem:f4}.

Since $\mu\le d-1$, the above display can be satisfied only if $d\le 10$.

\item The Cayley graph $G$ of the group 
$\Gamma=\langle S\mid R\rangle$, where $S=\{a,b,c\}$ and $R=\{c^2,ab,a^3\}$, 
is the Fisher graph of the $3$-regular tree, and hence $\mu(G) > \phi$.
The exact value of $\mu(G)$ may be calculated by \cite[Thm 1]{GrL2}
(see also Proposition \ref{lem:f4}(a) and \cite[Ex.\ 5.1]{Gilch}).

We note that the $\arc{3,12,12}$ lattice
is a quotient graph of $G$ by adding the further relator $(ac)^6$.
Since the last lattice has connective constant at least $\phi$, so does $G$
(see \cite[Cor.\ 4.1]{GrL3}).

\item The Cayley graph $G$ of the group 
$\Gamma=\langle S\mid R\rangle$, where $S=\{a,b,c\}$ and $R=\{a^2,b^2,c^2,(ac)^2\}$,
is the generalized Fisher graph of the $4$-regular tree. 
The connective constant $\mu(G)$ may be calculated exactly, as in Theorem \ref{g5},
and satisfies $\mu>\phi$.

Since the ladder  $\LL$ is the quotient graph of $G$ obtained by 
adding the further relator $(bc)^2$, we have by \cite[Cor.\ 4.1]{GrL3} that $\mu(G)>\phi$.
(see \cite{GrL3}).

\item The Cayley graph of the Grigorchuk group with three generators has $\mu\ge \phi$. The proof
uses a special construction 
due to Malyshev
based on the orbital Schreier graphs, and is presented in Section \ref{sec:Grig}.

\item (b)
A \GHF\ of a Cayley graph is also a transitive \ghf\ (see \cite{GL-Cayley}). 
Therefore, any cubic Cayley graph 
with a \GHF\  satisfies $\mu\ge \phi$.

\item (b)
Let $G\in \sG_3$ be such that: there exists $\sH \le \Aut(G)$ that acts transitively but is not unimodular. 
By \cite[Thm 3.5]{GL-Cayley}, $T$ has a transitive \ghf,
whence $\mu\ge \phi$.

\end{Alist}

\subsection{Open question}\label{sec:oq}

We mention a general situation in which we are unable to
show that $\mu\ge\phi$.
 Let $G$ be the Cayley graph of an infinite, finitely generated, 
virtually abelian group $\Ga=\langle S\mid R\rangle$ with $|S|=3$. 
Is it generally true that $\mu(G)\ge \phi$?
Whereas such groups are abelian-by-finite, the finite-by-abelian case is
fairly immediate (see Theorem \ref{thm:exact}).

A method for constructing such graphs was 
described by Biggs \cite[Sect.\ 19]{Biggs} and developed by Seifter \cite[Thm 2.2]{NS}. 
Cayley graphs with two or more ends are considered in Section \ref{sec:ends}.

\section{Proof of Theorem \ref{thm:1}}
\label{sec:proof1}

We begin with some notation that will be used throughout this article.
Let $\LLp$ be the singly-infinite ladder  of Figure \ref{fig:2}. An
\emph{eastward} SAW on $\LLp$
is a SAW  starting at $0$ that, at each stage, steps either to the right 
(that is, \emph{horizontally}, denoted H) or between 
layers (that is, \emph{vertically}, denoted V). 
Note that the first step of an eastward walk is necessarily H,
and every V step is followed by an H step.
Let $\WW_n$ be the set of $n$-step eastward SAWs on $\LLp$.  

\begin{figure}
\centerline{\includegraphics[width=0.5\textwidth]{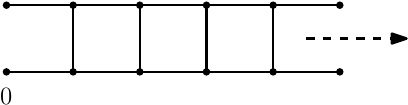}}
\caption{The singly infinite ladder  $\LLp$. 
The doubly infinite ladder $\LL$
extends to infinity both leftwards and rightwards.}
\label{fig:2}
\end{figure}

It is clear that $\WW_n$ is in one--one correspondence 
with the set of $n$-letter words $w$ in the alphabet $\{$H,\,V$\}$
that start with the letter H and have
no pair of consecutive appearances of the letter V.
We shall frequently consider $\WW_n$ as this set of words, and we shall make use of 
the set $\WW_n$ throughout this paper.

It is elementary, by considering the first two steps,  
that $\eta_n=|\WW_n|$ satisfies the recursion
$$
\eta_n = \eta_{n-1}+\eta_{n-2}, \qq n \ge 2,
$$
with $\eta_0=\eta_1=1$. Therefore,
\begin{equation}\label{eq:3}
\lim_{n\to\oo} \eta_n^{1/n} = \phi.
\end{equation}

\begin{figure}
\centerline{\includegraphics[width=0.6\textwidth]{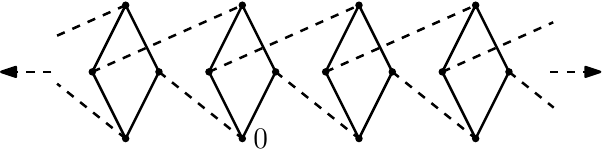}}
\caption{The doubly infinite 
\lq twisted ladder' $\TT\LL$ is obtained from the ladder by twisting
every other \qua.}
\label{fig:rotL}
\end{figure}

Let $G =(V,E)\in\sQ_3$, and 
let $\sW_n$ denote the set of $n$-step walks starting at the root $0$.
Let $h:V \to\RR$.
We shall first construct an injection $f:\WW_n \to \sW_n$,
and then we will show that, subject to appropriate conditions,
each $f(w)$ is a SAW. 
In advance of giving the formal definition of $f$,
we explain it informally. When thinking of an element of $\WW_n$ as a word of 
length $n$, we apply the function $m$ at every appearance of H, 
and $q$ at every appearance of V; for example, the word HVHH corresponds to 
the vertex $m^2qm(0)$.

\begin{definition}\label{saw-def}
For $w=(w_1w_2\cdots w_n) \in\WW_n$, we
let $f(w)=(f_0,f_1,\dots, f_n)$ be the $n$-step walk on $G$ given as follows.\begin{numlist}
\item $f_0=0$, $f_1=m(f_0)$.
\item Assume $k \ge 1$ and $(f_0,f_1,\dots,f_k)$ have been defined.
\begin{letlist} 
\item If $w_{k+1}=$ {\rm H}, then $f_{k+1}=m(f_k)$.
\item If $w_{k+1}=$ {\rm V}, then $f_{k+1}=q(f_k)$.
\end{letlist}
\end{numlist}
\end{definition}

\begin{lemma}\label{lem-1}
The function $f$ is an injection from $\WW_n$ to $\sW_n$.
\end{lemma}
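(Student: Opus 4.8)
The plan is to show that the map $f\colon \WW_n \to \sW_n$ of Definition \ref{saw-def} is well defined (i.e.\ genuinely produces an $n$-step walk on $G$) and then injective, by recovering the word $w$ from the walk $f(w)$ one letter at a time. First I would check well-definedness: since $G$ is cubic, $q(f_k)$ is always the well-defined third neighbour of $v=m(f_{k-1})$ other than $f_{k-1}$ and $m(v)$, provided the preceding step was an H step that landed at $f_k$ via $m$; the constraint defining $\WW_n$ (no two consecutive V's, and the first letter is H) guarantees that whenever we are about to apply $q$ at position $k+1$, the previous letter $w_k$ was H, so $f_k=m(f_{k-1})$ and $q(f_k)$ makes sense. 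Likewise $m(f_k)$ is always defined since $m$ is a total function on $V$. Thus $f(w)$ is a walk of length exactly $n$ starting at $0$, so $f$ maps into $\sW_n$.

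Next, for injectivity I would argue that the word $w$ is determined by the walk $f(w)=(f_0,\dots,f_n)$. The key observation is that at each step the walk either applies $m$ (for an H) or $q$ (for a V), and that these two moves are distinguishable from the geometry of the walk together with the function $h$. Concretely, when $w_{k+1}=\mathrm{H}$ we move from $f_k$ to the neighbour maximizing $h$, whereas the $q$-move (which is legal only right after an $m$-move) goes to a different neighbour. More precisely, suppose $w,w'\in\WW_n$ with $f(w)=f(w')$; let $k$ be the first index with $w_{k+1}\ne w'_{k+1}$. Then $f_0,\dots,f_k$ agree, and WLOG $w_{k+1}=\mathrm H$, $w'_{k+1}=\mathrm V$. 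The V step forces $w_k=\mathrm H$ (no consecutive V's; if $k=0$ this is automatic since the first letter must be H, contradicting $w'_1=\mathrm V$), so $f_k=m(f_{k-1})$ for $k\ge 1$, or $f_k=f_1=m(f_0)$ if $k=0$ which again contradicts $w'_1=\mathrm V$. Hence $k\ge1$, $f_k=m(f_{k-1})$, and then $w_{k+1}=\mathrm H$ gives $f_{k+1}=m(f_k)$ while $w'_{k+1}=\mathrm V$ gives $f_{k+1}=q(f_k)$. By the definition of $q(f_k)$ as the neighbour of $f_k$ distinct from both $f_{k-1}$ and $m(f_k)$, we get $q(f_k)\ne m(f_k)$, so $f_{k+1}\ne f_{k+1}$, a contradiction. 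Therefore $w=w'$ and $f$ is injective.

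I expect the only subtle point — the main obstacle — to be handling the boundary cases at $k=0$ and the precise role of the "no consecutive V" and "starts with H" constraints in guaranteeing that $q$ is only ever applied at a vertex of the form $m(\cdot)$, so that the distinguishing step $q(f_k)\ne m(f_k)$ is available. This is exactly why $\WW_n$ was defined with those word constraints. Note that at this stage we are \emph{not} claiming $f(w)$ is self-avoiding; that is the content of the subsequent argument using hypotheses such as \eqref{eq:1}--\eqref{eq:2} or \eqref{qm}--\eqref{ag}. Here we need only that $f$ lands in $\sW_n$ and is one-to-one, which follows from the cubic structure of $G$ and the recursive definition of $f$ alone, without any use of $h$ beyond the definition of $m$.
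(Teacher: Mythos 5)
Your proof is correct and takes essentially the same route as the paper: locate the first index at which the two words differ, observe that the preceding letter must be H, and conclude via $q(f_k)\ne m(f_k)$ (the paper's $m^2(u)\ne qm(u)$); your additional check that $f$ indeed lands in $\sW_n$ is a harmless elaboration the paper leaves implicit. (Only a typo: ``$f_{k+1}\ne f_{k+1}$'' should read $f_{k+1}(w)\ne f_{k+1}(w')$.)
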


\begin{proof}
Let $w,w'\in \WW_n$ satisfy $w\ne w'$, 
and let $l$ be such that $w_i=w'_i$ for $1\le i<l$, and $w_l=$ H, $w_l'=$ V.
It is necessarily the case that $l \ge 2$ and $w_{l-1}=w'_{l-1}=$ H.
We have that $f_i(w)=f_i(w')$ for $1\le i < l$, and
\begin{equation*}
f_{l}(w)=m^2(u),\qq f_{l}(w')=qm(u),
\end{equation*}
where $u=f_{l-2}(w)$. 
Since $m^2(u)\neq qm(u)$, we have $f(w)\neq f(w')$ as required.
\end{proof}

\begin{proof}[Proof of Theorem \ref{thm:1}(a)]
Let $G =(V,E)\in \Qh$, and let $h:V \to\RR$ be harmonic such that
\eqref{eq:1}--\eqref{eq:2} hold. 

\begin{lemma}\label{lem1}
The function $f$ is an injection from $\WW_n$ to $\Si_n(0)$.
\end{lemma}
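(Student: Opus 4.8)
The plan is to show that, under hypotheses \eqref{eq:1}--\eqref{eq:2}, the walk $f(w)$ produced by Definition \ref{saw-def} is not merely a walk but is genuinely self-avoiding, so that the injection of Lemma \ref{lem-1} restricts to an injection $\WW_n \to \Si_n(0)$. Combined with Lemma \ref{lem-1} this gives the statement immediately, and combined with \eqref{eq:3} and Hammersley's limit \eqref{def:cc} it yields $\mu(G)\ge\phi$ (that last deduction is the point of the whole construction, but the lemma itself only asks for the injectivity-into-SAWs claim).

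First I would record the elementary structural facts about $f(w)$. Because every V in the word $w\in\WW_n$ is preceded and followed by an H (the word starts with H and has no VV), the vertex sequence $f(w)$ decomposes into \emph{blocks}: maximal runs of the form $m^k(u)$ corresponding to consecutive H's, separated by single applications of $q$. Writing $v_j$ for the endpoints of these blocks, each step of $f(w)$ is either $x\mapsto m(x)$ or $x\mapsto qm(x)$ from a previously reached $m$-vertex. The key local geometric input is consequence (b) stated after \eqref{eq:2}: subject to \eqref{eq:1}--\eqref{eq:2} one has $h(qm(u))>h(u)$, $h(mqm(u))>h(m(u))$, and $h((qm)^2(u))>h(m(u))$ for all $u$. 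I would first give the derivation of these inequalities (this is the promised equation \eqref{eq:1new}): from the definition of $M$, $h(m(u))-h(u)=M_u$ and $h(qm(u)) = h(m(u)) - M_{m(u)} + M_{qm(u)}$ or a similar identity tracking how $q$ and $m$ move $h$, so that $h(qm(u))-h(u) = M_u - M_{m(u)} + \big(\text{correction}\big)$, and \eqref{eq:1} forces this to be positive; the second and third follow the same way using both \eqref{eq:1} and \eqref{eq:2}.

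Next I would use these inequalities to prove self-avoidance. The strategy is to show $h$ is \emph{strictly increasing along every two consecutive steps} of $f(w)$ in a suitable sense — more precisely, that along the subsequence of block-endpoints $h$ is strictly increasing, and that within and across a single $q$-transition no vertex repeats. Concretely: within a pure-H block, $M_x>0$ (consequence (a)) gives $h$ strictly increasing, so no repeats there; at a V-step we pass $m(u)\mapsto q(m(u))$, and the inequalities $h(qm(u))>h(u)$ and $h(mqm(u))>h(m(u))$ and $h((qm)^2(u))>h(m(u))$ guarantee that the newly visited vertices $qm(u)$ and its $m$-successor have $h$-value exceeding that of the vertices two steps back, which (together with the fact that $h$ only ever increases as we move forward through the construction after accounting for the one-step dips of size $M_{m(u)}-M_u$, controlled to be small by \eqref{eq:1}) prevents any collision with earlier vertices. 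The cleanest formulation: define the ``height profile'' of $f(w)$ and check that the only way two indices $i<j$ could have $f_i=f_j$ is ruled out by examining the at-most-three-step window around a $q$, where \eqref{eq:1}--\eqref{eq:2} were tailored precisely to exclude the three offending return patterns $qm(u)=u$, $mqm(u)=m(u)$, $(qm)^2(u)=m(u)$ and their compositions.

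The main obstacle I anticipate is the global (rather than local) part of self-avoidance: ruling out that the walk returns to a vertex visited many steps earlier, not just an immediate backtrack. For this I expect to argue by a monotonicity/potential argument — showing that although $h$ can decrease by $M_{m(u)}-M_u$ at certain steps, \eqref{eq:1} bounds this decrease strictly below $M_u$ and below $M_{qm(u)}$, so the running value of $h$ never drops back to a level already achieved at an earlier stage of the same ``phase''; one then needs to check the bookkeeping across an H-block followed by a V-step followed by the next H-block, which is exactly what \eqref{eq:2} controls ($2M_{qm(u)}>M_{m(u)}-M_u+M_{mqm(u)}$ says the ``gain'' over a $q$-transition dominates the ``loss''). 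I would organize this as: (i) prove $h(f_{k}) $ is eventually strictly larger than $h(f_{j})$ for all $j$ in the preceding block once $k$ is past the current $q$; (ii) conclude no vertex is repeated within a block or between adjacent blocks; (iii) chain (ii) to get global self-avoidance since any repeat would force a repeat between two specific nearby blocks. Once self-avoidance is established, $f(w)\in\Si_n(0)$ and injectivity is inherited from Lemma \ref{lem-1}, completing the proof.
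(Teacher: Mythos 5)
Your plan follows the paper's own proof in all essentials: by Lemma \ref{lem-1} it suffices to show each $f(w)$ is self-avoiding; the three inequalities of \eqref{eq:1new} are obtained from harmonicity — the ``similar identity'' you gesture at is $h(qm(u))-h(m(u))=M_u-M_{m(u)}$, coming from averaging over the three neighbours of $m(u)$ — combined with \eqref{eq:1}--\eqref{eq:2}; and self-avoidance is then proved by induction on precisely your domination invariant, which the paper formalizes as the statement $S_k$ that $h(f_k)>h(f_i)$ for all $i\le k-1$ (resp.\ $i\le k-2$) when $w_k=$ H (resp.\ V). The one caution is that your step (iii) should be phrased as maintaining that global invariant inductively (with \eqref{eq:2}, via \eqref{eq:6}, supplying the needed comparison for the VHV pattern), since reducing an arbitrary distant repeat to a repeat between nearby blocks is not by itself a valid reduction; organized as the induction on $S_k$, your argument is exactly the paper's.
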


\begin{proof}
In the light of Lemma \ref{lem-1}, it suffices to show that each $f(w)$ is a SAW.

Let $u \in V$. The three neighbours of $m(u)$ are $u$, $qm(u)$, $m^2(u)$ (see Figure \ref{fig:1}).
Since $h$ is harmonic,
$$
3h(m(u)) = h(u)+h(qm(u))+h(m^2(u)), \qq u \in V,
$$
so that
\begin{equation}\label{eq:harm}
h(qm(u))-h(m(u)) = M_u-M_{m(u)}.
\end{equation}
Therefore, by \eqref{eq:1},
\begin{align}\label{eq:5a}
h(qm(u))-h(u) &= M_u-M_{m(u)} +\bigl[h(m(u))-h(u)\bigr]\\
&=2M_u-M_{m(u)} > 0, \nonumber\\
h(mqm(u))-h(m(u))&= M_u-M_{m(u)}+\bigl[h(mqm(u))-h(qm(u)\bigr]\label{eq:5b}\\
&  =M_u-M_{m(u)}+M_{qm(u))}  >0,\nonumber
\end{align}
and, by \eqref{eq:harm} with $u$ replaced by $qm(u)$, and \eqref{eq:2},
\begin{align}\label{eq:6}
h((qm)^2(u)) -h(m(u)) &= M_u-M_{m(u)} + M_{qm(u)}+\bigl(M_{qm(u)}-M_{mqm(u)}\bigr)\\
&  >0. \nonumber
\end{align}
See Figure \ref{fig:1} again.
By \eqref{eq:5a}--\eqref{eq:6},
\begin{equation}\label{eq:1new}
qm(u) \ne u,\qq mqm(u)\ne m(u), \qq (qm)^2(u) \ne m(u),
\end{equation}
as claimed above Theorem \ref{thm:1}.

Let $w \in \WW_n$.
Let $S_k$ be the statement that
\begin{letlist}
\item $f_0,f_1,\dots, f_k$ are distinct, and
\item if $w_k=$ H, then $h(f_{k})>h(f_i)$ for $0\le i \le k-1$, and
\item if $w_k=$ V, then $h(f_{k}) > h(f_i)$ for $0\le i \le k-2$.
\end{letlist}
If $S_k$ holds for every $k$, then the $f_k$ are distinct, whence $f(w)$ is a SAW.
We shall prove the $S_k$ by induction.

Evidently, $S_0$ and $S_1$ hold.
Let $K\ge 2$ be such that $S_k$ holds for $k<K$, and consider $S_K$. 

\begin{numlist}
\item
Suppose first that $w_K=$ V, so that $w_{K-1}=$ H. By \eqref{eq:5a} (or \eqref{eq:1new})
with $u=f_{K-2}$ and $v=m(f_{K-2})=f_{K-1}$, we have that $h(f_K) > h(f_{K-2})$.
\begin{letlist}
\item If $w_{K-2}=$ H, the claim follows by $S_{K-2}$.
\item Assume $w_{K-2}=$ V (so that, in particular, $K \ge 4$).
We need also to show that $h(f_K)>h(f_{K-3})$. 
In this case, we take $u=f_{K-4}$ so that $m(u)=m(f_{K-4})=f_{K-3}$,
and $(qm)^2(u)=f_{K}$  in \eqref{eq:6},
thereby obtaining that $h(f_K)>h(f_{K-3})$ as required.
\end{letlist}

\item
Assume next that $w_K=$ H.
\begin{letlist}
\item If $w_{K-1}=$ H, the relevant claims of $S_K$ follow by $S_{K-1}$ and the fact
that $f_K = m(f_{K-1})$. 
\item If $w_{K-1}=$ V, then $w_{K-2}=$ H. By \eqref{eq:5b}, $h(f_K)>h(f_{K-2})$,
and the claim follows by $S_{K-1}$ and $S_{K-2}$.
\end{letlist}
\end{numlist}
This completes the induction, and the lemma is proved.
\end{proof}

By Lemma \ref{lem1}, $|\Si_n(0)| \ge |\WW_n|$, and part (a) follows by \eqref{eq:3}.
\end{proof}

\begin{proof}[Proof of Theorem \ref{thm:1}(b)]
Let $G\in\sG_3$ and let $(h,\sH)$ be a transitive \ghf. 
For $u\in V$, let
$M=\max\{h(v)-h(u): v \sim u\}$ as in \eqref{eq:0}. We have that 
$M>0$ and, by transitivity, $M$ does not depend
on the choice of $u$. 
Since $h$ is \hdi, the neighbours of any
$v\in V$ may be listed as $v_1$, $v_2$, $v_3$ where 
\begin{equation*}
h(v_i)-h(v) = \begin{cases} M &\text{ if } i=1,\\
-M &\text{ if } i=2,\\
\eta &\text{ if } i=3,
\end{cases}
\end{equation*}
where $\eta$ is a constant satisfying $|\eta| \le M$.
By the transitive action of $\sH$, we have that $-\eta\in\{-M,\eta,M\}$, whence
$\eta\in\{-M,0,M\}$.

If $\eta=0$, $h$ is harmonic and satisfies \eqref{eq:1}--\eqref{eq:2}, and the claim
follows by part (a).  If $\eta=M$, it is easily seen that the construction of Definition \ref{saw-def}
results in an injection from $\WW_n$ 
to $\Si_n(v)$. If $\eta=-M$, we replace $h$ by $-h$
to obtain the same conclusion.
\end{proof}

\begin{proof}[Proof of Theorem \ref{thm:1}(c)]
This is a variant of the proof of part (a). 
With $\g=\lceil\frac12(g-1)\rceil$ as in the theorem,   let $T_k$ be the statement that:
\begin{letlist}
\item if $w_k=$ H, 
then $h(f_{k})>h(f_i)$ for $0\le i \le k-1$, and
\item if $w_k =$ V, 
then $h(f_{k}) > h(f_i)$ whenever $i$ satisfies either
\begin{romlist}
\item $i=k-2s\ge 0$ for $s \in \NN$, or
\item $i=k-(2t+1)\ge 0$ for $t\in\NN$, $t\geq \g$.
\end{romlist}
\end{letlist}

\begin{lemma}\label{lem:10}
Assume  that $T_k$ holds for every $k$. The vertices $f_k$ are distinct, 
so that each $f(w)$ is a SAW.
\end{lemma}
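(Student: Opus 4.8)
The plan is to observe that Lemma \ref{lem:10} is the direct analogue, for girth $g\ge 3$, of the statement that the map $f$ of Definition \ref{saw-def} produces self-avoiding walks once the height conditions $T_k$ are known. So first I would note that, since $f$ is already an injection into $\sW_n$ by Lemma \ref{lem-1}, it suffices to prove that each $f(w)$, $w\in\WW_n$, has no repeated vertices; and by part (a) of $T_k$ (which by hypothesis holds for every $k$) the vertices $f_0,f_1,\dots,f_k$ are distinct for every $k$, so in particular $f_0,\dots,f_n$ are distinct, which is exactly the statement that $f(w)\in\Si_n(0)$.

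But this immediate argument hides the real content: part (a) of $T_k$ is itself what needs justification, and the natural way to run it is by induction on $k$, using parts (b) and (c) together with the girth hypothesis. So the substance of the proof is to show that parts (b), (c) of $T_k$ — the height comparisons — actually force part (a). The key step is: if $w_k=$ H then $f_k=m(f_{k-1})$ and, since $w_k=$ H gives $h(f_k)>h(f_i)$ for all $i\le k-1$ by (b), the vertex $f_k$ is strictly higher than all earlier vertices and hence new. If $w_k=$ V then $f_k=q(f_{k-1})$, and (c) gives $h(f_k)>h(f_i)$ for $i$ of the form $k-2s$ or $k-(2t+1)$ with $t\ge\g$; the indices \emph{not} covered are $i=k-1,k-3,\dots,k-(2\g-1)$, i.e. the odd offsets below $2\g$. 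For these, one must rule out $f_k=f_i$ directly: if $f_k=f_i$ with $k-i=2r+1\le 2\g-1$ odd, then the walk segment $(f_i,f_{i+1},\dots,f_k)$ together with the coinciding endpoints would close up into a closed walk of length $k-i\le 2\g-1\le g-1$ (using $\g=\lceil\frac12(g-1)\rceil$, so $2\g-1\le g-1$ when $g$ is odd, and $2\g-1\le g-1$ when... in fact $2\g-1=g-1$ or $g$), and one shows this segment is in fact a cycle (non-backtracking, using $m^2(u)\ne qm(u)$ and the analogous facts $qm(u)\ne u$, $mqm(u)\ne m(u)$, $(qm)^2(u)\ne m(u)$ from the consequences of \eqref{eq:1}, \eqref{ag}), contradicting that $G$ has girth $g$. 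The even offsets $i=k-2s$ are handled by (c)(i) directly, as are the large odd offsets by (c)(ii).

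The main obstacle I anticipate is the bookkeeping in the short-odd-offset case: one has to check that the putative closed segment $(f_i,\dots,f_k)$ has length at most $g-1$ and really is a genuine cycle (no chord-induced shortcut, correct orientation, no immediate backtrack), so that the girth bound applies and yields the contradiction. This is where the precise choice $\g=\lceil\frac12(g-1)\rceil$ and the hypothesis \eqref{ag} (that $h((qm)^\g q(u))>h(u)$) get used: \eqref{ag} controls exactly the longest "ladder excursion" that could return to its start within girth $g$, and the ceiling matches the parity of $g$. Once that case is closed, the induction for part (a) of $T_k$ runs to completion, and the lemma follows.

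Concretely, I would organise the write-up as: (1) reduce to showing each $f(w)$ is self-avoiding; (2) recall the non-coincidence facts $qm(u)\ne u$, $m^2(u)\ne qm(u)$, $mqm(u)\ne m(u)$, $(qm)^2(u)\ne m(u)$ and, more generally, the chain of strict height increases along a "ladder" segment $u, qm(u), mqm(u), \dots$ that follows from iterating \eqref{eq:harm}-type identities under \eqref{eq:1} and culminates in \eqref{ag}; (3) prove part (a) of $T_k$ by induction on $k$, splitting on $w_k\in\{$H,\,V$\}$ and, in the V case, on whether the offset to an earlier vertex is even, large-odd, or small-odd, handling the first two by parts (c)(i),(c)(ii) and the third by the girth contradiction just described; (4) conclude that $f_0,\dots,f_n$ are distinct, hence $f(w)\in\Si_n(0)$.
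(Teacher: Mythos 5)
Your argument is correct and is essentially the paper's proof: the H case follows from $T_k$(b), and in the V case the indices not covered by $T_k$(c) are exactly the odd offsets $k-1,k-3,\dots,k-(2\g-1)$, where a coincidence $f_k=f_i$ would give a closed walk of odd length at most $2\g-1\le g-1$ and hence a cycle shorter than the girth, a contradiction. Two minor points: you need not verify that the offending segment is a genuine (non-backtracking) cycle, since any closed walk of odd length contains an odd cycle of no greater length; and \eqref{ag} plays no role in this lemma — it is used only in the subsequent induction establishing the $T_k$ (your opening remark that part (a) of $T_k$ already yields the conclusion is literally valid, but the paper's proof deliberately derives distinctness from (b), (c) and the girth alone, which is what makes the argument reusable in that induction).
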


Part (c) follows from this by Lemma \ref{lem-1}, as in the proof of part (a).

\begin{proof}[Proof of Lemma \ref{lem:10}]
Let $k \ge 1$.
If $w_k=$ H then, by $T_k$, $f_k \ne f_0,f_1,\dots, f_{k-1}$. Assume that
$w_k = $ V. By $T_k$, we have that $f_k \ne f_i$ for $0\le i<k$ except possibly for the values
$i\in I:=\{k-1, k-3, \dots, k-(2\g-1)\}$.
If $f_k=f_i$ with $i\in I$, then $G$ has girth not exceeding $2\g-1$ ($<g$), a contradiction.
Since this holds for all $k$, the $f_k$ are distinct, and hence $f(w)$
is a SAW.
\end{proof}

We next prove the $T_k$ by induction. Evidently, $T_0$ and $T_1$ hold.
Let $K\ge 2$ be such that $T_k$ holds for $k<K$, and consider $T_K$. 

\emph{Suppose first that} $w_K=$ V, so that $w_{K-1}=$ H. By \eqref{qm} 
with $u=f_{K-2}$,
\begin{equation}
h(f_K) > h(f_{K-2}).\label{kk21}
\end{equation}
\begin{Alist}
\item Assume $w_{K-2}=$ H.
By \eqref{kk21} and $T_{K-2}$, we have that $h(f_K)>h(f_i)$ for $i\leq K-2$.
\item Assume $w_{K-2}=$ V (so that, in particular, $K \ge 4$). By $T_{K-2}$,
\begin{alignat*}{2}
h(f_{K-2})&> h(f_{K-2-2s}),\ &&\text{for}\  s\in\NN, \ K-2-2s\geq 0,\\
h(f_{K-2})&> h(f_{K-2-(2t+1)}),\q&& \text{for}\ t\geq \g,\ K-2-(2t+1)\geq 0.
\end{alignat*}
Hence, by \eqref{kk21},
\begin{alignat}{2}
h(f_{K})&> h(f_{K-2s}),\ &&\text{for}\ s\in\NN,\ K-2s\geq 0,\label{kge}\\
h(f_{K})&> h(f_{K-(2t+3)}),\q&& \text{for}\ t\geq \g,\ K-2-(2t+1)\geq 0.\notag
\end{alignat}
It remains to show that 
\begin{equation}\label{kk6}
h(f_K)>h(f_{K-(2\g+1)}).
\end{equation}  
Exactly one of the following two cases  occurs.
\begin{romlist}
\item There are two (or more) consecutive appearances of H in $w_K,\dots,w_{K-2\g}$. 
In this case there exists $1\leq t\leq \g$ such that $w_{K-2t}=$ H, implying 
by $T_{K-2t}$ that
\begin{equation*}
h(f_{K-2t})>h(f_i), \qq 0\leq i \le K-{2t}-1.
\end{equation*}
Inequality \eqref{kk6} follows by \eqref{kge}.
\item We have that $(w_K,\dots,w_{K-2\g})= ($V,H,V,H,$\dots$,V$)$,
in which case \eqref{kk6} follows from  \eqref{ag}.
\end{romlist}
\end{Alist}

\emph{Suppose next that} $w_K=$ H.
\begin{Alist}
\item If $w_{K-1}=$ H, the relevant claims of $T_K$ follow by $T_{K-1}$ and the fact
that $f_K = m(f_{K-1})$. 
\item If $w_{K-1}=$ V, then $w_{K-2}=$ H. By \eqref{qm} and $T_{K-2}$, 
$h(f_K)>h(f_{K-2})>h(f_i)$ for $0\le i \le K-3$.
Finally, $h(f_K)>h(f_{K-1})$ since $f_K = m(f_{K-1})$.
\end{Alist}
This completes the induction.
\end{proof}

\begin{proof}[Proof of Theorem \ref{thm:1}(d)]
It suffices by part (c) to show that the harmonic function $h$ satisfies \eqref{qm}.
This holds as in \eqref{eq:5a} and \eqref{eq:5b}.
\end{proof}

\section{Transitive graph height functions}\label{sec:ghf}

By Theorem \ref{thm:1}(b), the possession of a transitive \ghf\ 
suffices for the inequality $\mu(G)\ge\phi$.
It is not currently known exactly which $G \in \sG_3$ possess  transitive \ghf s, and it is shown
in \cite[Thms 5.1, 8.1]{GL-amen} that the Cayley graph of neither the Grigorchuk group 
nor the Higman group has a \ghf\ at all.
We pose a weaker question here. 
Suppose $G \in\sG_3$ possesses a \ghf\ $(h,\sH)$. Under what
further condition does $G$ possess a \emph{transitive} \ghf? A natural 
candidate function $g:V\to\ZZ$ is obtained as follows. 

\begin{proposition}\label{prop1}
Let $\Ga$ act transitively on $G=(V,E)\in \sG_d$ where $d \ge 3$.
Assume that $(h,\sH)$ is a \ghf\ of $G$, where $\sH \normal \Ga$ and $[\Ga:\sH]<\oo$.
Let $\kappa_i\in \Ga$ be representatives of the cosets, so that $\Ga/\sH=\{\kappa_i\sH: i \in I\}$, and let
\begin{equation}\label{eq:7}
g(v) = \sum_{i\in I} h(\kappa_i v), \qq v \in V.
\end{equation}
The function $g:V\to\ZZ$ is $\Ga$-difference-invariant.
\end{proposition}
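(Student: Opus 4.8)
The plan is to verify the defining property of $\Ga$-difference-invariance directly, namely that $g(\a v)-g(\a u)=g(v)-g(u)$ for every $\a\in\Ga$ and all $u,v\in V$; the assertion that $g$ is $\ZZ$-valued is then immediate, since $I$ is finite (as $[\Ga:\sH]<\oo$) and $h$ takes values in $\ZZ$.

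First I would isolate an elementary consequence of the $\sH$-difference-invariance of $h$: for each fixed $\eta\in\sH$, rearranging $h(\eta v)-h(\eta u)=h(v)-h(u)$ shows that $h(\eta w)-h(w)$ is independent of $w\in V$; write $c(\eta)$ for this common value, so $h(\eta w)=h(w)+c(\eta)$ for all $w\in V$. The second ingredient is coset bookkeeping. Since $\sH\normal\Ga$ and $[\Ga:\sH]<\oo$, the quotient $\Ga/\sH$ is a finite group, and right multiplication by the class of $\a$ permutes its elements; in terms of the transversal $\{\kappa_i:i\in I\}$ this says that for each $i$ there is a unique $\rho_\a(i)\in I$ with $\kappa_i\a\,\sH=\kappa_{\rho_\a(i)}\sH$, and $\rho_\a$ is a bijection of $I$. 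Using normality once more to replace the left coset $\kappa_{\rho_\a(i)}\sH$ by the right coset $\sH\,\kappa_{\rho_\a(i)}$, I may write
\[
\kappa_i\a=\eta_i\,\kappa_{\rho_\a(i)},\qquad \eta_i=\eta_i(\a)\in\sH,
\]
where $\eta_i$ and $\rho_\a(i)$ depend only on $\a$, $i$, and the fixed transversal, not on any vertex.

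With these in hand the computation is routine: for any $v\in V$,
\begin{align*}
g(\a v)&=\sum_{i\in I}h(\kappa_i\a v)=\sum_{i\in I}h\bigl(\eta_i\,\kappa_{\rho_\a(i)}v\bigr)\\
&=\sum_{i\in I}\bigl[h(\kappa_{\rho_\a(i)}v)+c(\eta_i)\bigr]=g(v)+\sum_{i\in I}c(\eta_i),
\end{align*}
where the penultimate equality uses the displayed property of $c$ (with $w=\kappa_{\rho_\a(i)}v$) and the last uses that $\rho_\a$ permutes $I$, so $\sum_i h(\kappa_{\rho_\a(i)}v)=\sum_j h(\kappa_j v)=g(v)$. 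Setting $C(\a):=\sum_{i\in I}c(\eta_i(\a))$, which does not depend on $v$, gives $g(\a v)=g(v)+C(\a)$; subtracting the same identity with $u$ in place of $v$ yields $g(\a v)-g(\a u)=g(v)-g(u)$, as required.

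I expect the only delicate point to be the passage from $\kappa_i\a=\kappa_{\rho_\a(i)}\eta_i'$ (which is all that the coset relation literally provides, with $\eta_i'\in\sH$ on the \emph{right}) to $\kappa_i\a=\eta_i\,\kappa_{\rho_\a(i)}$ with the $\sH$-factor on the \emph{left}: this is exactly where the hypothesis $\sH\normal\Ga$ is used, and it is essential, since otherwise one is left with a term $h(\kappa_{\rho_\a(i)}\eta_i' v)$ to which the $\sH$-difference-invariance of $h$ does not apply. Beyond this, the argument is the simple principle that summing an additive $\sH$-cocycle over a transversal of $\sH$ in $\Ga$ produces an additive $\Ga$-cocycle.
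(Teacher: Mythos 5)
Your argument is correct, and it uses the same two ingredients as the paper (normality of $\sH$, and the fact that translating the transversal by a group element re-enumerates the cosets), but it arranges them differently. The paper proceeds in two steps: it first checks that the differences $g(v)-g(u)$ do not depend on the choice of coset representatives (replacing $\kappa_1$ by $\kappa_1'=\eta\kappa_1\in\kappa_1\sH$ and invoking the \hdi\ property of $h$), and then, writing $\a=\kappa_j\eta$ with $\eta\in\sH$, notes that $\{\kappa_i\kappa_j\sH:i\in I\}$ is again the full list of cosets, so that $g(\a v)-g(\a u)=g(\eta v)-g(\eta u)=g(v)-g(u)$. You instead work with values rather than differences: you introduce the cocycle $c(\eta)=h(\eta w)-h(w)$, decompose $\kappa_i\a=\eta_i\kappa_{\rho_\a(i)}$ with $\eta_i\in\sH$ and $\rho_\a$ a permutation of $I$, and conclude the slightly stronger statement $g(\a v)=g(v)+C(\a)$ with $C(\a)$ independent of $v$, from which difference-invariance is immediate. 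Your version buys a single self-contained computation and makes completely explicit where normality enters (moving the $\sH$-factor from the right of $\kappa_{\rho_\a(i)}$ to its left, exactly as you flag); the paper's version buys a reusable observation --- representative-independence of the differences of $g$ --- which it also relies on elsewhere (for instance, in the variant used in the proof of Theorem \ref{thm:2endgp}). Both are complete proofs of the proposition, including the $\ZZ$-valuedness, which as you say is immediate from $h:V\to\ZZ$ and $|I|<\oo$.
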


A variant of the above will be useful in the proof of Theorem \ref{thm:2endgp}.

\begin{proof}
The function $g$ is given in terms of the representatives $\kappa_i$ of the cosets, but its differences
$g(v)-g(u)$
do not depend on the choice of the $\kappa_i$. To see this, suppose $\kappa_1$ is replaced in \eqref{eq:7}
by some $\kappa_1'\in \kappa_1\sH$. Since $\sH$ is a normal subgroup, $\kappa_1'=\eta\kappa_1$ for some
$\eta\in\sH$. The new function $g'$ satisfies
$$
g'(v)-g(v)=h(\kappa_1'v)-h(\kappa_1v)= h(\eta\kappa_1v)-h(\kappa_1v),
$$
so that
\begin{align*}
[g'(v)-g'(u)]-[g(v)-g(u)] = [h(\eta\kappa_1v)-h(\kappa_1v)] -[h(\eta\kappa_1u)-h(\kappa_1u)] =0,
\end{align*}
since $\eta\in\sH$ and $h$ is \hdi.

We show as follows that $g$ is $\Ga$-difference-invariant.
Let $\a\in\Ga$, and write $\a= \kappa_j\eta$ for some 
$j\in I$ and $\eta\in\sH$. Since $\Ga/\sH$ can be written in the form $\{\kappa_i\kappa_j\sH: i \in I\}$,
\begin{align*}
g(\a v) - g(\a u) &= \sum_{i\in I} \Bigl[h(\kappa_i\kappa_j\eta v) - h(\kappa_i\kappa_j\eta u)\Bigr]\\
&=g(\eta v) - g(\eta u)\\
&= g(v)-g(u),
\end{align*}
since $g$ is \hdi.
\end{proof}

If the function $g$ of \eqref{eq:7} is non-constant, it follows
that $(g-g(0),\Ga)$ is a transitive
\ghf, implying  by Theorem \ref{thm:1}(b) that $\mu(G) \ge \phi$. 
This is not invariably the case, as the following example indicates.

\begin{figure}
\centerline{\includegraphics[width=0.4\textwidth]{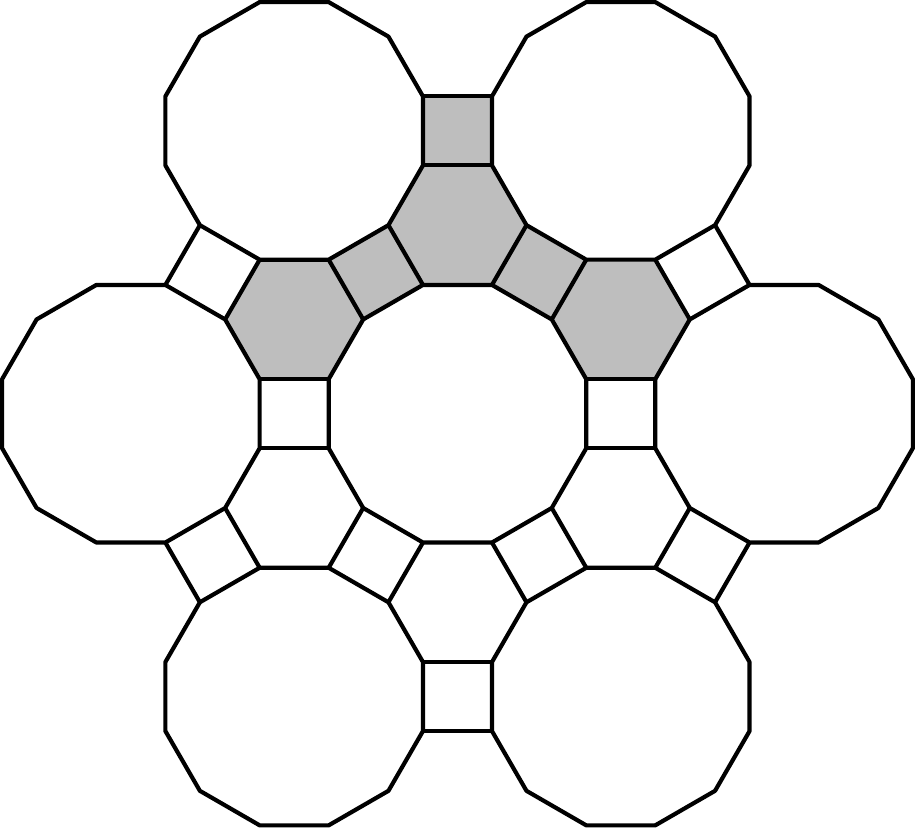}
\q\raisebox{1.8cm}{\includegraphics[width=0.5\textwidth]{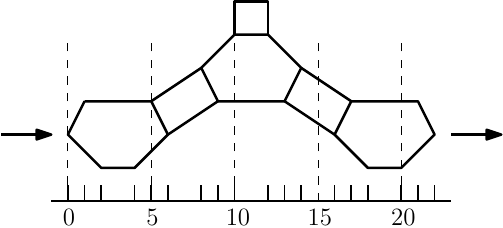}}}
\caption{The left figure depicts part of the Archimedean lattice $\AA=\arc{4,6,12}$.
Potentials may be assigned to the vertices as illustrated in the right figure,
and the potential differences are duplicated by translation, and by reflection
in a horizontal axis. 
The resulting harmonic function satisfies \eqref{eq:1}.}
\label{fig:arch}
\end{figure}

\begin{example}\label{ex:arch4}
Consider the Archimedean lattice $\AA=\arc{4,6,12}$ of Figure \ref{fig:arch}. 
Then $\AA$ is transitive and cubic,
but it has no transitive \ghf. This is seen by examining the structure of $\AA$. 
There are a variety of ways of showing $\mu(\AA)\ge\phi$,
and we refer the reader to Theorem \ref{thm:1} and the stronger inequality
of Remark \ref{rem:arch-hex2}.
\end{example}

\begin{theorem}\label{qtt}
Let $\Ga$ act transitively on $G=(V,E)\in \sG_3$.
Let $(h,\sH)$ be a \ghf\ of $G$, where $\sH \normal \Ga$ and $[\Ga:\sH]<\oo$.
Pick $\kappa_i\in \Ga$ such that $\Ga/\sH=\{\kappa_i\sH: i \in I\}$, and let
$g:V\to\ZZ$ be given by \eqref{eq:7}. If there exists a constant $C<\oo$ such that
\begin{equation}\label{eq:distbnd}
d_G(v,\kappa_i v) \le C,\qq v \in V, \ i\in I,
\end{equation}
then $(g-g(0),\Ga)$ is a transitive \ghf.
\end{theorem}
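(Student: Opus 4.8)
The plan is to show that the function $g'=g-g(0)$ together with the full transitive group $\Ga$ forms a transitive graph height function, by verifying the three defining conditions of Definition \ref{def:height}. Proposition \ref{prop1} already supplies the key algebraic fact: $g$ is $\Ga$-difference-invariant (a fortiori $\sH$-difference-invariant), so $g'$ satisfies condition (a) trivially ($g'(0)=0$), and condition (b) holds with $\sH$ replaced by $\Ga$, which acts transitively by hypothesis. So the entire burden falls on condition (c): for every $v\in V$ there exist neighbours $u,w$ of $v$ with $g(u)<g(v)<g(w)$. Equivalently, since $g$ is integer-valued, it suffices to show that $g$ is non-constant on $\pd v$ with $g(v)$ strictly between the extreme values — but for a graph-height-function one really just needs the existence of an up-neighbour and a down-neighbour, and by the difference-invariance it is enough to check this at the root $0$.

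First I would fix $v\in V$ and estimate $g(v)-g(u)$ for $u\sim v$. Write $g(v)-g(u)=\sum_{i\in I}\bigl[h(\kappa_i v)-h(\kappa_i u)\bigr]$. Since $h$ is a graph height function with respect to $\sH$, each vertex $\kappa_i v$ has an $h$-up-neighbour and an $h$-down-neighbour; the point is to relate these to neighbours of $v$ itself. Here is where the distance bound \eqref{eq:distbnd} enters: because $d_G(v,\kappa_i v)\le C$ for all $i$, the images $\kappa_i v$ all lie in the ball $B(v,C)$, and more usefully $h(\kappa_i v)-h(v)$ is bounded in absolute value by $C\cdot\max_{x\sim y}|h(x)-h(y)| =: CH$ (the maximum $h$-increment across an edge is finite since $G$ is locally finite and quasi-transitive under $\sH$). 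So each summand $h(\kappa_i v)$ is within $CH$ of $h(v)$, uniformly in $i$.

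Now for condition (c) at the root: I want a neighbour $w\sim 0$ with $g(w)>g(0)$. Consider the orbit sum more carefully. For the identity coset representative $\kappa_{i_0}=\id$, pick the $h$-up-neighbour $w$ of $0$, so $h(w)-h(0)=M>0$ where $M$ is the maximal $h$-increment. For the other indices $i$, the difference $h(\kappa_i w)-h(\kappa_i 0)$ lies in the range $[-H,H]$; I need to ensure the positive contribution from $i=i_0$ is not swamped. The clean way is to choose $w$ to be the neighbour of $0$ maximizing $g$, i.e. $g(w)=\max_{x\sim 0}g(x)$, and suppose for contradiction that $g(w)\le g(0)$, hence $g(x)\le g(0)$ for all $x\sim 0$; then iterate. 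If $g$ has a global maximum along some direction it contradicts the strict monotonicity forced by $h$ on a single orbit. Concretely: the set $U=\{v:g(v)=\max\}$ would have to be empty or, if $g$ is bounded above, following $h$-up-steps (which exist at every vertex and strictly increase $h$) one eventually finds a vertex where $g$ decreases along every edge while $h$ increases along some edge; summing over the orbit and using that the $\kappa_i$-translates stay within a bounded $h$-window gives a contradiction once the walk is long enough. The symmetric argument with $-h$ (or $h$-down-neighbours) produces the required down-neighbour, and difference-invariance transports condition (c) from $0$ to every $v$.

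The main obstacle I expect is exactly this last step: ruling out the degenerate possibility that the orbit-averaged potential $g$ is constant, or has the wrong sign of increment at the root, purely from the distance bound \eqref{eq:distbnd}. The distance bound guarantees the translates $\kappa_i v$ track $v$ uniformly, so $g(v)-h(v)\cdot|I|$ is a bounded function; hence $g$ and $|I|\cdot h$ differ by a bounded amount. Since $h$ itself is unbounded above and below (it has up- and down-neighbours everywhere, so along a suitable self-avoiding path $h\to\pm\infty$), $g$ is also unbounded in both directions, so $g$ is certainly non-constant. The finer point — that $g$ actually increases \emph{along an edge} at each vertex — then follows because $|g-|I|h|$ bounded plus $h$ having an up-neighbour with increment exactly $M$ at $v$ forces $g$ to have, among the $h$-up-neighbours reachable within bounded distance, one along which $g$ has increased; pushing this through cleanly (choosing the right neighbour and invoking difference-invariance to reduce to the root) is the delicate bookkeeping, but it is forced and elementary once the ``$g\approx |I|h$ up to $O(C)$'' estimate is in hand.
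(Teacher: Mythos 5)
Your proposal is correct and follows essentially the paper's own route: the distance bound \eqref{eq:distbnd}, together with the finiteness of the maximal $h$-increment along an edge, shows that $g-|I|\,h$ is a bounded function, so $g$ is non-constant (indeed unbounded in both directions), and the $\Ga$-difference-invariance supplied by Proposition \ref{prop1} then yields the transitive graph height function. The paper compresses the final step into the remark preceding Example \ref{ex:arch4} (non-constancy suffices), whereas you verify the up-/down-neighbour condition directly at the root; the content is the same.
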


\begin{proof}
By the comment prior to Example \ref{ex:arch4}, 
we need to show that $g$ is non-constant.
Since $(h,\sH)$ is a graph height function, we may pick $v\in V$ such that
$h(v) > 2C\de$, where
$$
\de:= \max\{|h(v)-h(u)|: u \sim v\}.
$$
By \eqref{eq:distbnd}, 
$$
[h(\kappa_i v) - h(\kappa_i)]  \in [h(v)-h(\id)] + [-2C\de,2C\de].
$$
Therefore, by \eqref{eq:7},
$$
[g(v)-g(\id)]  \in |I|h(v) + \bigl[-2C\de|I|,2C\de|I|\bigr],
$$
so that $g(v) > g(\id)$ as required.
\end{proof}

\begin{corollary}
Let $\Ga=\langle S\mid R\rangle$ be an infinite, finitely-generated group. 
Let $\sH\normal \Ga$ be a finite-index normal 
subgroup, and let $(h,\sH)$ be a graph height function of the Cayley graph $G$
(so that it is a \lq strong\rq\ \ghf, see \cite{GL-Cayley}). 
Pick $\kappa_i\in \Ga$ such that $\Ga/\sH=\{\kappa_i\sH: i \in I\}$, and let
$g:V\to\ZZ$ be given by \eqref{eq:7}.
If 
\begin{equation}
\max_{1\leq i\leq k} \bigl|[\kappa_i] \bigr| < \infty, \label{mc}
\end{equation}
where $[\kappa_i]=\{g^{-1}\kappa_ig:g\in \Ga\}$ is the conjugacy class of 
$\kappa_i$,
then $(g-g(0),\Ga)$ is a transitive \ghf.
\end{corollary}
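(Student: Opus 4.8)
The plan is to deduce this from Theorem~\ref{qtt}: it suffices to exhibit a constant $C<\oo$ with $d_G(v,\kappa_i v)\le C$ for all $v\in V$ and $i\in I$, since then $(g-g(0),\Ga)$ is a transitive \ghf. So the whole task is to convert the finiteness of the conjugacy classes $[\kappa_i]$ into a uniform bound on $d_G(v,\kappa_iv)$. First I would recall that, for a Cayley graph $G$ of $\Ga=\langle S\mid R\rangle$ with $v=\g\cdot\id$, the displacement $d_G(v,\kappa_iv)$ equals $d_G(\id,\g^{-1}\kappa_i\g\cdot\id)$, which is the word-length $|\g^{-1}\kappa_i\g|_S$ of the conjugate $\g^{-1}\kappa_i\g$. (Here one uses left-invariance of the graph metric under the left action of $\Ga$ on its own Cayley graph.) Thus
\[
\max_{v\in V,\ i\in I} d_G(v,\kappa_i v)
= \max_{i\in I}\ \sup_{\g\in\Ga} |\g^{-1}\kappa_i\g|_S
= \max_{i\in I}\ \sup_{x\in[\kappa_i]} |x|_S .
\]

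Next I would use the hypothesis \eqref{mc}: each conjugacy class $[\kappa_i]$ is a finite set, and there are only finitely many indices $i\in I$ (since $[\Ga:\sH]<\oo$). Hence the right-hand side above is a maximum of finitely many finite quantities, namely
\[
C:=\max_{1\le i\le k}\ \max_{x\in[\kappa_i]} |x|_S < \oo .
\]
With this $C$, the distance condition \eqref{eq:distbnd} holds, so Theorem~\ref{qtt} applies and $(g-g(0),\Ga)$ is a transitive \ghf, as claimed. (One should also note that $g$ is $\Ga$-difference-invariant by Proposition~\ref{prop1}, and that $(h,\sH)$ being a graph height function guarantees the strict-inequality condition needed for $g$ to qualify as a height function; but these are exactly the ingredients already packaged inside Theorem~\ref{qtt}.)

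The only genuine point to get right is the first identity: that the Cayley-graph displacement of $v\mapsto\kappa_i v$ at $v=\g\cdot\id$ is the word length of the conjugate $\g^{-1}\kappa_i\g$. This is where it matters that $G$ is a \emph{Cayley} graph (so that $\Ga$ acts on $V$ by left multiplication while the generating set acts by right multiplication, and the two commute), which is why the corollary is stated for Cayley graphs rather than for the general transitive setting of Theorem~\ref{qtt}. I expect this translation step — unwinding $d_G(v,\kappa_iv)=|\g^{-1}\kappa_i\g|_S$ — to be the main (though still routine) obstacle; once it is in place, the finiteness of conjugacy classes does the rest immediately.
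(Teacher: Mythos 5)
Your proposal is correct and follows the paper's own argument: the paper's proof is exactly the observation that $d_G(g,\kappa_i g)=d_G(\id,g^{-1}\kappa_i g)$, so that \eqref{mc} gives the uniform bound \eqref{eq:distbnd} needed to invoke Theorem~\ref{qtt}. You have simply written out the same left-invariance and finite-maximum steps in more detail.
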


\begin{proof}
Since
$d_G(g,\kappa_ig) = d_G(0, g^{-1}\kappa_i g)$,
condition \eqref{eq:distbnd} holds by \eqref{mc}.
\end{proof}

\begin{example}
An \emph{FC-group} is a group all of whose conjugacy 
classes are finite (see, for example, \cite{tomk}). Clearly,
\eqref{mc} holds for FC-groups. 
\end{example}

We note a further situation in which there exists a transitive \ghf.

\begin{theorem}\label{thm:exact}
Let $\Ga$ act transitively on $G=(V,E)\in \sG_d$ where $d \ge 3$, and let $(h,\sH)$
be a \ghf\ on $G$. If there exists a short exact sequence 
$\id \to K \xrightarrow{\a} \Ga \xrightarrow{\be} \sH \to \id$ with
$|K|<\oo$, then $G$ has a transitive \ghf.
\end{theorem}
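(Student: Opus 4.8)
The plan is to produce, from the data of the short exact sequence $\id \to K \xrightarrow{\a}\Ga\xrightarrow{\be}\sH\to\id$, a transitive graph height function on $G$ by pulling back $h$ along $\be$ and then repairing the resulting function to make it $\Ga$-difference-invariant in the strong sense required by Definition \ref{def:height}. First I would observe that $\sH$ already acts quasi-transitively on $G$ via the inclusion $\sH\le\Aut(G)$, and that $\Ga$ acts transitively; the point is to transfer the height function from the quasi-transitive group $\sH$ to the transitive group $\Ga$. Since $K=\ker\be$ is finite, the idea is that $h$ is \lq\lq almost'' invariant under the action of $K\le\Ga$, and averaging over the finite set $K$ (or over a transversal, as in Proposition \ref{prop1}) will cure the defect.

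\medskip

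Concretely, I would proceed as follows. Fix a section set, i.e.\ for each $s\in\sH$ choose $\g_s\in\Ga$ with $\be(\g_s)=s$; then every $\g\in\Ga$ factors uniquely as $\g = \g_s\cdot\a(k)$ for some $s\in\sH$, $k\in K$. Define
\begin{equation*}
g(v) = \sum_{k\in K} h\bigl(\a(k)\, v\bigr), \qq v\in V,
\end{equation*}
and set $g'(v)=g(v)-g(0)$. The claim to verify is that $g$ is $\Ga$-difference-invariant. For $k_0\in K$ this is immediate since $\a(k_0)$ just permutes the summation index; the genuinely substantive case is an element $\g_s$ lying over $s\in\sH$, where one must use that $h$ is $\sH$-difference-invariant together with the fact that $\a(k)\g_s$ and $\g_s\a(k')$ differ by an element of $\a(K)$ (because $\a(K)=\ker\be\normal\Ga$), so the two sums over $K$ agree up to reindexing. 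This is the same bookkeeping as in the proof of Proposition \ref{prop1}, with the roles of the normal subgroup and the transversal interchanged. It should also be checked — as there — that $g$ does not depend on the choice of section $\{\g_s\}$, so that $\Ga$-difference-invariance is well posed.

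\medskip

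Next I would check property (iii) of Definition \ref{def:height}, namely that every vertex has a neighbour of strictly larger $g$-value and one of strictly smaller. For $v\in V$ and a neighbour $u$ with $h(u)>h(v)$, one wants to control $g(u)-g(v)=\sum_{k\in K}[h(\a(k)u)-h(\a(k)v)]$; since $K$ is finite, there is a uniform bound $\de=\max\{|h(x)-h(y)|:x\sim y\}$ (finite because $G$ is quasi-transitive for $\sH$ and $h$ is $\sH$-difference-invariant), so each summand is bounded in absolute value. The cheap way to force a sign is to pick $v$ with $h(v)$ very large — exactly the trick in the proof of Theorem \ref{qtt} — but since $G\in\sG_d$ is transitive one in fact only needs to produce one vertex with a strictly monotone neighbour and then transport it by $\Ga$. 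Here it is cleanest to note that since $d\ge 3$, at each vertex $v$ at least two of the three (or more) increments $h(u)-h(v)$ over neighbours $u$ have the two opposite signs plus possibly a third of intermediate size, and averaging the translated copies preserves the existence of a strict ascent and a strict descent; I would phrase this by showing $g$ is non-constant and then invoking $\Ga$-difference-invariance and transitivity to propagate (iii) to all vertices, mirroring the argument just before Example \ref{ex:arch4}. Finally, replacing $g$ by $-g$ if necessary and rescaling to land in $\ZZ$, $(g',\Ga)$ is the desired transitive graph height function.

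\medskip

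The main obstacle I anticipate is establishing genuine $\Ga$-difference-invariance of $g$ for elements over $\sH$: one has to be careful that $\be$ need not split, so $\Ga$ is only an extension (possibly non-split) of $\sH$ by $K$, and the factorization $\g=\g_s\a(k)$ must be handled via the normality of $\a(K)$ rather than via any group splitting. The second, milder difficulty is verifying that the averaged $g$ is non-constant — i.e.\ that the finite averaging over $K$ did not accidentally annihilate the variation of $h$; this is where the uniform bound $\de$ on increments, combined with the unboundedness of $h$ guaranteed by property (iii) for $(h,\sH)$, does the work, exactly as in Theorem \ref{qtt}.
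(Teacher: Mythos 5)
Your central step does not go through. With $g(v)=\sum_{k\in K}h(\a(k)v)$ and $\g\in\Ga$, normality of $\a(K)$ does allow the reindexing
\begin{equation*}
g(\g v)-g(\g u)=\sum_{k\in K}\bigl[h(\g\,\a(k)v)-h(\g\,\a(k)u)\bigr],
\end{equation*}
but at this point you are stuck: to strip off $\g$ you need $h(\g x)-h(\g y)=h(x)-h(y)$ for arbitrary $\g\in\Ga$, which is precisely the $\Ga$-difference-invariance you are trying to prove, not a hypothesis. The \hdi ce of $h$ applies only to automorphisms lying in $\sH\le\Aut(G)$, and a coset representative $\g_s\in\Ga$ with $\be(\g_s)=s$ is not such an automorphism: the exact sequence is purely abstract, and nothing in the hypotheses ties the action of $\g_s$ on $G$ to the action of $s\in\sH$ on $G$ (this is not just a matter of the extension failing to split; even a splitting would give no such compatibility). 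This is exactly where the analogy with Proposition \ref{prop1} breaks down: there the normal subgroup under which $h$ is difference-invariant and the coset representatives all sit inside one group acting on $G$, whereas here the finite kernel $\a(K)\le\Ga$ has no relation to the group $\sH$ for which $h$ is \hdi. There is a second, independent gap in your non-constancy argument: the argument of Theorem \ref{qtt} relies on the bounded-displacement hypothesis \eqref{eq:distbnd}, and nothing here bounds $d_G(v,\a(k)v)$ uniformly in $v$; a finite-order automorphism can reverse the height (for instance $|K|=2$ with $h(\a(k)x)=-h(x)+c$, as for a reflection), in which case the averaged $g$ is constant. So averaging over $K$ is the wrong mechanism on both counts.

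The hypothesis should be exploited through $\be$ rather than through $\a$, which is the paper's route: fix a root $v_0$, use transitivity of $\Ga$ to write $v=\g v_0$, and set $g(v):=h(\be(\g)v_0)$, so that only elements of $\sH$ ever act inside $h$. Then
\begin{equation*}
g(\rho v)-g(\rho u)=h(\be(\rho)\be(\g)v_0)-h(\be(\rho)\be(\g')v_0)=h(\be(\g)v_0)-h(\be(\g')v_0)=g(v)-g(u),
\end{equation*}
since $\be(\rho)\in\sH$ and $h$ is \hdi; non-constancy holds because $\be$ is surjective onto the quasi-transitively acting group $\sH$, so $h$ is unbounded on the set $\{\be(\g)v_0:\g\in\Ga\}$. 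Note that this construction never averages over $K$ at all; the finiteness of $K$ is not used the way you use it, and the key idea you are missing is to push every group element through $\be$ before it ever meets $h$.
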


\begin{proof}
Suppose such an exact sequence exists. 
Fix a root $v_0\in V$, 
find $\g\in\Ga$ such that $v=\g v_0$, and define $g(v):= h(\be_\g v_0)$.

Certainly $g(v_0)=0$ and $g$ is non-constant.
It therefore suffices to show that $g$ is $\Ga$-difference-invariant.
Let $u,v\in V$ and find 
$\g'\in\Ga$ such that $ u=\g'v_0$.
For $\rho\in\Ga$,
\begin{align*}
g(\rho v)-g(\rho u) &= h(\be_{\rho\g} v_0)-h(\be_{\rho\g'} v_0)\\
&=h(\be_\rho\be_\g v_0)-h(\be_\rho\be_{\g'}v_0)\\
&=h(\be_\g v_0)-h(\be_{\g'} v_0) \qq\text{since  $\be_\rho\in \sH$}\\
&=g(v)-g(u),
\end{align*}
and the proof is complete.
\end{proof}

\section{Graphs with girth $3$ or $4$}\label{sec:girth}

As stated in Section \ref{sec:def}, $\sG_{d,g}$ denotes the subset of $\sG$ containing 
graphs with degree $d$ and girth $g$. 
Our next theorem is concerned with
$\sG_{3,3}$, and the following (Theorem \ref{g4}) with $\sG_{3,4}$.

\begin{theorem}\label{g3}
For $G\in\sG_{3,3}$, we have that
\begin{equation}\label{uc-2}
x_1\leq \mu(G)\leq x_2,
\end{equation}
where $x_1,x_2\in(1,2)$ satisfy
\begin{align}
\frac{1}{x_1^2}+\frac{1}{x_1^3}&=\frac{1}{\sqrt{2}},\label{up1}\\
\frac{1}{x_2^2}+\frac{1}{x_2^3}&=\frac{1}{2}.\label{up3}
\end{align}
Moreover, the upper bound $x_2$ is sharp.
\end{theorem}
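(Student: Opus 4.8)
\textbf{Proof plan for Theorem \ref{g3}.}

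The plan is to establish the three claims separately: the lower bound $x_1 \le \mu(G)$, the upper bound $\mu(G) \le x_2$, and the sharpness of $x_2$. For the lower bound, I would exploit the fact that $G \in \sG_{3,3}$ contains a triangle, and by transitivity every vertex lies on a triangle; moreover $\mu(G) \ge \sqrt{2}$ by \cite[Thm 1.1]{GL-Comb} (or the weaker bounds available for graphs satisfying condition $\pi_v$). The idea is to construct an injection from a suitable set of abstract ``eastward'' walks — now in a graph resembling the triangular ladder rather than $\LLp$ — into the SAWs on $G$. Concretely, near any vertex $u$ one has a triangle through $u$, and the remaining edge out of the third triangle vertex provides an ``exit''; one can traverse a triangle ($2$ steps) or a connecting edge ($1$ step), which after reaching a new triangle one may again take in two ways. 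Using transitivity, a depth-$n$ walk in the resulting combinatorial tree injects into $\Si_N(0)$ for $N$ comparable to $n$, and the growth rate of that tree is the root of $\zeta^2 + \zeta^3 = 1/\sqrt 2$ once one accounts for the base $\sqrt 2$ obtainable from an embedded SAW subgraph; rearranging gives $1/x_1^2 + 1/x_1^3 = 1/\sqrt 2$. The cleanest route is the substitution argument used elsewhere in the paper (as in the $G_{k+2}$ vs.\ $G_k$ reasoning): if $\mu(\text{triangle-contracted object}) \ge \sqrt 2$, then a SAW on $G$ that moves through triangles and connecting edges has generating function dominated termwise, and one solves $\zeta_0^2+\zeta_0^3$ against $1/\sqrt 2$.

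For the upper bound $\mu(G) \le x_2$, the key structural observation is that in a cubic graph each triangle is attached to the rest of the graph by at most three edges, and by transitivity the triangles partition (or nearly partition) a controlled portion of $V$. I would contract each triangle to a single vertex to obtain an auxiliary cubic graph $G'$ with $\mu(G') \le 2$ (the universal bound $\mu \le d-1 = 2$ for cubic graphs). Any SAW on $G$ projects to a walk on $G'$ in which each original triangle is entered and exited, contributing at least $2$ edges of $G$ per visited triangle-vertex of $G'$ plus at least $1$ edge per connecting step; hence $Z^G_0(\zeta) \le \sum_{\pi'} (\zeta^2+\zeta^3)^{|\pi'|}$ roughly, and this is finite whenever $\zeta^2+\zeta^3 < 1/\mu(G') $, in particular whenever $\zeta^2 + \zeta^3 < 1/2$. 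Therefore $1/\mu(G) \ge \zeta_0$ where $\zeta_0^2+\zeta_0^3 = 1/2$, which is exactly $\mu(G) \le x_2$. The delicate point here — and I expect this to be the main obstacle — is making the projection-to-$G'$ and the per-triangle bookkeeping rigorous: one must check that distinct SAWs on $G$ do not all collapse to the same walk on $G'$ in an uncontrolled way, i.e.\ that the number of SAWs on $G$ mapping to a fixed walk on $G'$ is bounded by the claimed $(\zeta^2+\zeta^3)$-type factor, and one must handle the boundary/initial triangle and the possibility that a SAW backtracks within a triangle. This requires care with how many of the three triangle edges a SAW may use and with the orientation conventions.

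For sharpness of $x_2$, I would exhibit an explicit $G \in \sG_{3,3}$ achieving $\mu(G) = x_2$, the natural candidate being the Fisher graph $T_\F$ of the $3$-regular tree $T$: replacing each vertex of $T$ by a triangle yields a transitive cubic graph of girth $3$. Its connective constant is computable exactly via \cite[Thm 1]{GrL2}, since $\mu(T) = 2$; the Fisher transformation sends $\mu(T)=2$ to the $\mu$-value solving precisely an equation of the form $1/\mu^2 + 1/\mu^3 = 1/2$ (the contracted graph being the tree, with branching $2$ at each contracted vertex). Thus $\mu(T_\F) = x_2$, and since $T_\F \in \sG_{3,3}$ the upper bound cannot be improved. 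I would close by remarking that the lower bound $x_1$ is presumably not sharp, the extremal-looking object there being the triangular ladder, whose connective constant exceeds $x_1$.
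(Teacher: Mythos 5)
Your plan is, in outline, the paper's own proof: contract the triangles of $G$ to obtain a cubic graph $G'$, recognise $G$ as the Fisher graph of $G'$, transfer the bounds $\sqrt2\le\mu(G')\le 2$ of \cite[Thm 4.1]{GL-Comb} through the Fisher relation, and get sharpness from the Fisher graph of the $3$-regular tree. The one genuine gap is precisely the point you hedge on with ``partition (or nearly partition)'': before any of this can run, you must show that every vertex of $G\in\sG_{3,3}$ lies in \emph{exactly one} triangle, so that the triangles are vertex-disjoint, the contraction is well defined, and $G'$ is cubic. In the paper this is Lemma \ref{uc3}(a): if some vertex lay in two triangles then, since degrees are $3$, some edge would lie in two triangles, forcing the two remaining vertices to be adjacent and $G$ to be finite, a contradiction. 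You also need $G'$ to be infinite, transitive and \emph{simple} (Lemma \ref{uc3}(b), where simplicity requires ruling out two triangles joined by a pair of edges), because the lower bound $\mu(G')\ge\sqrt2$ you want to feed into the substitution argument is stated for graphs in $\sG_3$; without these verifications neither that bound nor the Fisher-transformation formula is available. (Your opening appeal to $\mu(G)\ge\sqrt2$ for $G$ itself plays no role; what is used is $\mu(G')\ge\sqrt2$.)

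On the other hand, the ``delicate bookkeeping'' you flag as the main obstacle for the upper bound is not needed as a separate argument: the projection-to-$G'$ and per-triangle counting is exactly the content of the exact relation $\mu(G)^{-2}+\mu(G)^{-3}=\mu(G')^{-1}$ of \cite[Thm 1]{GrL2}, restated as Proposition \ref{lem:f4}(a), which you already invoke for the sharpness computation. Citing it once, and using that $x\mapsto x^{-2}+x^{-3}$ is decreasing, converts $\sqrt2\le\mu(G')\le2$ directly into $x_1\le\mu(G)\le x_2$, and the choice $G'=T_3$ (so $\mu(G')=2$) gives sharpness of $x_2$, exactly as you propose.
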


The bounds of \eqref{up1}--\eqref{up3} satisfy 
$x_1\approx 1.529< 1.618 \approx \phi$ and $x_2 \approx 1.769$,
so that $\phi\in(x_1,x_2)$.
The upper bound $x_2$ is achieved by the Fisher graph of the 
$3$-regular tree (see Proposition \ref{lem:f4} and \cite{Gilch,GrL2}).

\begin{theorem}\label{g4}
For $G\in \sG_{3,4}$, we have that
\begin{equation}\label{uc-1}
y_1\leq \mu(G)\leq y_2,
\end{equation}
where 
\begin{align}
y_1&=12^{1/6},\label{y1}
\end{align}
and $y_2=1/\zeta$ where $\zeta$ is the smallest positive root of the equation
\begin{equation}\label{y0}
2x^2(1+x+x^2)=1.
\end{equation}
Moreover, the upper bound $y_2$ of \eqref{uc-1} is sharp.
\end{theorem}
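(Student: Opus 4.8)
\textbf{Proof plan for Theorem \ref{g4}.}
The plan is to prove the two bounds separately. For the lower bound $y_1 = 12^{1/6}$, I would use Theorem \ref{thm:1}(c) with $g=4$, so $\g = \lceil\frac12(g-1)\rceil = \lceil\frac32\rceil = 2$. The task reduces to producing a function $h:V\to\RR$ satisfying \eqref{qm} and \eqref{ag} with $\g=2$, i.e. $h((qm)^2 q(u)) > h(u)$. However, the value $12^{1/6}$ suggests the cleaner route: $12^{1/6} = (12)^{1/6}$ and a $4$-cycle has $6$ midpoint-edges around it; more to the point, every vertex of a cubic girth-$4$ graph lies in a quadrilateral, and one can build an injection from eastward ladder walks (or from walks counted by a $6$-periodic transfer structure on a quadrilateral) into SAWs on $G$ so that the number of $n$-step SAWs grows at least like the number of such ladder-type walks on the quadrilateral, whose growth rate is $12^{1/6}$. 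Concretely: pick, for each vertex, one quadrilateral through it; this gives a local structure into which one injects the words of $\WW_n$ (or a variant), using the girth-$4$ hypothesis to rule out collisions exactly as in Lemma \ref{lem:10}. Since $12^{1/6} \approx 1.513 < \phi$, this is consistent with the fact that girth $4$ alone does not force $\mu \ge \phi$.

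For the upper bound $\mu(G) \le y_2 = 1/x$ where $x$ is the largest real root of $2x(x+x^2+x^3)=1$, I would argue by comparing $G$ with a suitable extremal graph. The equation $2x(x+x^2+x^3)=1$ is the kind of generating-function identity one gets from a tree-like recursion: at a vertex one has $2$ choices of forward direction (degree $3$, minus the edge just used), and from each such choice a self-avoiding walk can either step directly (contributing $x$) or traverse the "other side" of the quadrilateral it must belong to (contributing $x^2$ or $x^3$ for the detour lengths), giving a branching generating function $Z$ with $Z = 1/(1 - 2x(x+x^2+x^3))$-type behaviour. The cleanest implementation: show that for any $G\in\sG_{3,4}$, every non-backtracking SAW can be encoded by a walk on a rooted tree of branching number controlled by $2x(x+x^2+x^3)$, so $\si_n(0) \le C y_2^n$. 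The sharpness is then witnessed by exhibiting a specific $G\in\sG_{3,4}$ — presumably a cubic Cayley graph or a Fisher-type graph built on quadrilaterals, analogous to how the Fisher graph of the $3$-regular tree achieves $x_2$ in Theorem \ref{g3} — whose connective constant is computed exactly by \cite{GrL2}-type transfer-matrix / generating-function methods and equals $y_2$. For sharpness one wants the "worst case" where no quadrilateral forces any extra self-avoidance constraint beyond the unavoidable one, i.e. a $4$-regular-tree-like object with every vertex sitting in exactly one quadrilateral and those quadrilaterals otherwise independent.

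The main obstacle I anticipate is the upper bound argument: one must show that in \emph{every} girth-$4$ cubic graph the presence of quadrilaterals genuinely constrains SAWs enough to give the bound $y_2$, not merely the trivial bound $\mu \le 2$ from degree $3$. The subtlety is that a SAW entering a quadrilateral $Q = (a,b,c,d)$ at $a$ may leave immediately, or traverse one edge of $Q$, or two edges, but it cannot freely revisit; tracking this requires a careful case analysis of how consecutive quadrilaterals can share vertices or edges (in girth $4$, two quadrilaterals can share an edge), and bounding the overcounting. I would handle this by defining a map from $n$-step SAWs on $G$ to words in a finite alphabet encoding the "local excursion type" at each step, showing the map is injective up to a bounded multiplicative factor, and checking the associated formal power series has radius of convergence $x$, i.e. that $\sum_n (\text{number of admissible words of weight } n) x^n$ diverges exactly at the root of $2x(x+x^2+x^3)=1$. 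The sharpness direction should then be comparatively routine given \cite{GrL2}: identify the extremal graph and quote the exact-value machinery, as was done for $\arc{4,6,12}$ and the Fisher graphs in Section \ref{sec:exs}.
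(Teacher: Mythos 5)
Your proposal does not reach either bound as stated; both halves have genuine gaps, and the lower bound is the serious one. Nothing in your sketch actually produces the constant $12^{1/6}$: the injection of ladder words as in Lemma \ref{lem:10} would require a function $h$ satisfying \eqref{qm} and \eqref{ag}, which is not available for a general $G\in\sG_{3,4}$ (and would give $\phi$, not $12^{1/6}$), while the alternative ``$6$-periodic transfer structure on a quadrilateral'' is not a mechanism at all --- there is no family of walks you have defined whose growth rate is $12^{1/6}$. The paper's route is structural: after disposing of the two exceptional graphs $\LL$ (where $\mu=\phi$) and $\TT\LL$ (where $\mu=\sqrt{1+\sqrt3}$), Lemma \ref{up4} shows every vertex lies in \emph{exactly one} quadrilateral; contracting each quadrilateral yields a graph $G'$ which is shown (transitivity via Lemma \ref{up4}, simplicity via a case analysis of two quadrilaterals joined by two edges, which is exactly where $\TT\LL$ must be excluded) to lie in $\sG_4$, so that $G$ is the generalized Fisher graph of $G'$. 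Then $\mu(G')\ge\sqrt3$ by \cite[Thm 4.1]{GL-Comb}, and Proposition \ref{lem:f4}(b) gives $2/\mu(G)^3\le 1/\mu(G')\le 1/\sqrt3$, i.e.\ $\mu(G)\ge(2\sqrt3)^{1/3}=12^{1/6}$. Your phrase ``pick, for each vertex, one quadrilateral through it'' silently assumes the content of Lemma \ref{up4} and ignores the exceptional cases where it fails, and you are missing the Fisher-transformation inequality and the $\sqrt{d-1}$ bound, which are the two ingredients that combine to give $12^{1/6}$.

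For the upper bound your intended route (encode SAWs on any $G\in\sG_{3,4}$ by words in a finite alphabet and bound the growth by the series with radius determined by $2x(x+x^2+x^3)=1$) is genuinely different from the paper's and is precisely the point you concede you cannot yet carry out: making such an encoding injective when quadrilaterals share edges or vertices is the whole difficulty, and you give no construction. The paper sidesteps this entirely: by \cite[Thm 11.6]{Wo} the free product graph $F=K_2*\ZZ_4$ (equivalently, the Fisher graph of the $4$-regular tree, which is your correctly guessed extremal graph) covers every $G\in\sG_{3,4}$, so rooted SAWs on $G$ lift injectively to $F$ and $\mu(G)\le\mu(F)$; the exact value $\mu(F)=y_2$ then comes from Gilch's formula \cite[Thm 3.3]{Gilch}, and sharpness is immediate since $F\in\sG_{3,4}$ (this is the content of Theorem \ref{g5}, specialized to $d=3$, $g=4$). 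If you want to salvage your direct-encoding plan you would in effect be reproving the covering statement by hand; otherwise the covering theorem is the missing idea you should import. A small further correction: it is not a ``fact'' that girth $4$ fails to force $\mu\ge\phi$ --- that is exactly the open Question \ref{qn:big} restricted to this class; the theorem's lower bound $12^{1/6}<\phi$ simply reflects what the present method yields.
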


The lower bound of \eqref{y1} satisfies 
$12^{1/6}\approx 1.513< 1.618 \approx \phi$. 
The  upper bound is approximately $y_2 \approx 1.900$,
and is achieved by the Fisher graph of the $4$-regular tree
(see Proposition \ref{lem:f4}).
The proofs of Theorems \ref{g3} and \ref{g4}
are given later in this section.

The emphasis of the current paper is upon lower bounds for connective
constants of cubic graphs. The upper bounds of Theorems \ref{g3}--\ref{g4} are included
as evidence of the accuracy of the lower bounds, and in support of the
unproven possibility that $\mu\ge\phi$ in each case. We note a more general result
(derived from results of \cite{Gilch, Wo}) for upper bounds of connective constants as follows.

\begin{theorem}\label{g5} 
For $G \in \sG_{d,g}$ where $d,g \ge 3$, we have that
$\mu(G) \le y$ where $\zeta := 1/y$ is the smallest positive real  root of the equation 
\begin{equation}\label{eq:theeq}
(d-2)\frac {M_1(\zeta)}{1+M_1(\zeta)} + \frac{M_2(\zeta)}{1+M_2(\zeta)}=1,
\end{equation}
and
\begin{equation} \label{eq:theeq2}
M_1(\zeta)=\zeta, \qq M_2(\zeta)=2(\zeta+\zeta^2+\dots + \zeta^{g-1}).
\end{equation}
The upper bound $y$ is sharp, and is achieved by the free product graph 
$F := K_2 * K_2 * \dots * K_2 * \ZZ_g$, with $d-2$ copies
of the complete graph $K_2$ on two vertices and one copy
of the cycle $\ZZ_g$ of length $g$.
\end{theorem}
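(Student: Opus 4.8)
The plan is to establish the two assertions of the theorem — the bound $\mu(G)\le y$ for all $G\in\sG_{d,g}$, and its sharpness — by comparison with the free product $F:=K_2*\dots*K_2*\ZZ_g$ (with $d-2$ copies of $K_2$). First I would check that $F\in\sG_{d,g}$: it is the Cayley graph of the group $\ZZ_2*\dots*\ZZ_2*\ZZ_g$ (with $d-2$ copies of $\ZZ_2$) with respect to the symmetric generating set consisting of the $d-2$ involutions together with a generator of the $\ZZ_g$-factor and its inverse, so $F$ is vertex-transitive and $d$-regular; and since the Cayley graph of a free product of groups is built tree-fashion from copies of the Cayley graphs of the factors — in particular every cycle of $F$ lies inside a single factor-copy, the $\ZZ_2$-copies being single edges and each $\ZZ_g$-copy a $g$-cycle — the girth of $F$ equals $g$.

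To identify $\mu(F)$, the crucial observation is that, all cycles of $F$ having length exactly $g$, a walk on $F$ is self-avoiding if and only if it is \emph{$g$-locally self-avoiding}, i.e.\ $w_i\ne w_j$ whenever $1\le|i-j|\le g$: were $w_k=w_j$ the first repetition, then $(w_j,\dots,w_k)$ would be a cycle of $F$, hence of length $g$, contradicting $w_k\ne w_{k-g}$. Thus $\si_n(\id)$ counts the $g$-locally self-avoiding walks of length $n$ from $\id$, and these are enumerated by a transfer matrix on the $g$ states $0,1,\dots,g-1$, where state $j$ records that the last $j$ steps run along a single $\ZZ_g$-copy of $F$; each state offers $d-1$ admissible continuations, except state $g-1$ (from which the step closing that $g$-cycle is forbidden), which offers only $d-2$. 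A direct computation shows that the largest eigenvalue $\lambda=1/\zeta$ of this matrix satisfies $1/\zeta=(d-3)+(d-2)M_2(\zeta)$, which rearranges to \eqref{eq:theeq}; equivalently, this is the value produced by the generating-function calculus for self-avoiding walks on free products of \cite{Gilch, Wo}. Hence $\mu(F)=y$, and the bound is attained.

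For the upper bound, fix $G\in\sG_{d,g}$ with root $0$, and write $L_n(G)$ for the number of $g$-locally self-avoiding walks of length $n$ from $0$; recall from the previous step that $L_n(F)$ equals the number $\si_n(\id)$ of $n$-step SAWs on $F$. Every SAW is $g$-locally self-avoiding, so $\si_n(0)\le L_n(G)$; I would then prove $L_n(G)\le L_n(F)$, whence $\si_n(0)\le L_n(G)\le L_n(F)=\si_n(\id)$, and $\mu(G)\le\mu(F)=y$ follows on taking $n$th roots and $n\to\infty$. Because $G$ is transitive of girth $g$, every vertex of $G$ lies on a $g$-cycle, and the finitely many local configurations that control how a $g$-locally self-avoiding walk on $G$ may be prolonged — which of the at most $d-1$ non-backtracking continuations are barred because they would revisit one of the previous $g$ vertices — form a transfer system whose Perron eigenvalue is to be compared with that of $F$. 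The inequality $L_n(G)\le L_n(F)$ is the assertion that $F$, carrying the sparsest arrangement of short cycles compatible with being $d$-regular of girth $g$ (exactly one $g$-cycle through each vertex, and no cycles of any other length), admits at least as many $g$-locally self-avoiding walks as any other member of $\sG_{d,g}$; I would establish it either by a Perron-eigenvalue domination for the two transfer matrices, or by constructing a length-preserving injection of the $g$-locally self-avoiding walks of $G$ into those of $F$.

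The hard part is this comparison. Although $F$ is heuristically the ``freest'' $d$-regular graph of girth $g$, a rigorous proof of $L_n(G)\le L_n(F)$ calls for careful bookkeeping of when a $g$-cycle of $G$ is being traced and closed — the states of $G$'s transfer system are more numerous and less uniform than those of $F$ — and the case $g=3$, in which a triangle can bar a second continuation one step earlier than a $\ZZ_g$-copy would, needs separate handling. The free-product evaluation of $\mu(F)$ in the second step is a nontrivial but routine application of \cite{Gilch, Wo}.
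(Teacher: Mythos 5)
Your identification of $F$ and your evaluation of $\mu(F)$ are fine: cycles of the free product graph do lie in single factor copies, so self-avoidance on $F$ reduces to the $g$-local condition, and your transfer matrix does yield $1/\zeta=(d-3)+(d-2)M_2(\zeta)$, which is algebraically equivalent to \eqref{eq:theeq}. That half is a correct, self-contained substitute for the paper's citation of \cite[Thm 3.3]{Gilch}. The genuine gap is the other half, which is the whole content of the theorem: the inequality $\mu(G)\le\mu(F)$ for an arbitrary $G\in\sG_{d,g}$ rests in your plan on the unproved comparison $L_n(G)\le L_n(F)$ of $g$-locally self-avoiding walk counts, and you only name two possible strategies (Perron-eigenvalue domination of the two transfer systems, or a length-preserving injection) without carrying either out. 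Neither is routine: the transfer system of a general $G$ lives on a different and larger state space than that of $F$ (types of non-backtracking paths of length $g-1$, with cycle-closing constraints that can overlap), and ``more forbidden local patterns'' does not by itself force fewer pattern-avoiding walks --- an inclusion--exclusion over overlapping closing events has alternating signs, so no monotonicity comes for free. As written, the central step is asserted, not proved.

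The paper closes exactly this step by a different and much shorter route: by \cite[Thm 11.6]{Wo}, the free product graph $F$ \emph{covers} every $G\in\sG_{d,g}$, and under a graph covering every rooted SAW on $G$ lifts to a SAW on $F$ from a fixed preimage of the root, distinct SAWs having distinct lifts; hence the number of $n$-step SAWs from the root of $G$ is at most the corresponding number for $F$, giving $\mu(G)\le\mu(F)$ with no local-self-avoidance bookkeeping at all. If you wish to rescue your argument, this covering property is the missing ingredient: it is precisely the rigorous form of your heuristic that $F$ is the ``freest'' $d$-regular graph of girth $g$, and it transports whole SAWs rather than merely comparing local constraint counts, so the delicate eigenvalue or injection comparison you flag as the hard part is never needed.
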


See \cite{Gilch} for the definition of free product graphs. Rather than repeat the general definition here, we
explain that the extremal graph $F$ of this theorem is the (simple) Cayley
graph of the free product group $\langle S \mid R\rangle$ with
$S=\{a_1,a_2,\dots,a_{d-2},b\}$ and $R=\{a_1^2,a_2^2,\dots,a_{d-2}^2,b^g\}$.

\begin{figure}
\centerline{\includegraphics[width=0.5\textwidth]{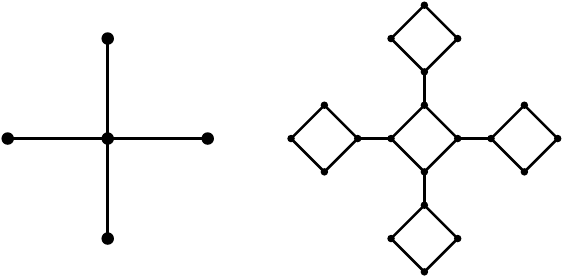}}
\caption{Each  vertex of $G$ is
replaced in the Fisher graph $G_\F$ by a cycle.}
\label{fig:fisher4}
\end{figure}

The proofs follow.
Let $G =(V,E)\in \sG_d$ where $d \ge 3$. A (generalized)  
\emph{Fisher graph} $G_\F$ is obtained from 
$G$ by replacing each vertex
by a $d$-cycle, called a \emph{Fisher cycle},
as illustrated in Figure \ref{fig:fisher4}.
The Fisher transformation originated in the work of Fisher \cite{fisher}
on the Ising model. 
We shall study the relationship between $\mu(G)$ and $\mu(G_\F)$, and to 
that end we need $G_\F$ to be quasi-transitive (see \eqref{def:cc}).
When $d=3$, $G_\F$ is invariably quasi-transitive but, when $d\ge 4$,
one needs to be specific about the choice of the Fisher cycles. Let $v \in V$, and order the
neighbours of $v$ in a fixed but arbitrary manner as $(u_1,u_2,\dots,u_d)$.
We replace $v$ by a Fisher cycle, denoted $F_v$,  
with ordered vertex-set in one--one correspondence with
the edges $\langle v, u_i\rangle$, $i=1,2,\dots,d$, in that order. For $x\in V$, find $\a_x\in\Aut(G)$
such that $\a_x(v)=x$, and replace $x$ by the Fisher cycle $\a_x(F_v)$. 
The family $\{\a_x: x \in V\}$ acts quasi-transitively on $G_\F$, as required.

The following proposition relates the connective constants of $G$ and $G_\F$, and it is valid in the
slightly more general context of non-simple graphs. Let $\sN_d$ be the set of infinite, rooted,
connected, transitive graphs with degree $d$ (we do not assume these graphs are simple).
A Fisher graph $G_\F$ of $G \in \sN_d$ is given as for simple graphs.

\begin{proposition}\label{lem:f4}
Let $G\in \sN_d$ where $d \ge 3$. 
\begin{letlist}
\item \cite[Thm 1(a)]{GrL2}  If $d=3$,
 \begin{equation}\label{eq:mudiff3}
\frac{1}{\mu(G_\F)^2}+\frac{1}{\mu(G_\F)^3} = \frac{1}{\mu(G)}.
\end{equation}
\item If $d =2r \ge 4$ is even,
 \begin{equation}\label{eq:mudiffeven}
\frac{2}{\mu(G_\F)^{r+1}}\leq \frac{1}{\mu(G)}.
\end{equation}
\item If $d=2r+1 \ge 5$ is odd,
 \begin{equation}\label{eq:mudiffodd}
\frac{1}{\mu(G_\F)^{r+1}}+\frac{1}{\mu(G_\F)^{r+2}}
\leq \frac{1}{\mu(G)}.
\end{equation}
\end{letlist}
\end{proposition}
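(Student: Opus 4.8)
The plan is to relate self-avoiding walks on $G_\F$ to self-avoiding walks on $G$ by the natural ``contraction'' map that collapses each Fisher cycle $F_v$ to the single vertex $v$. A SAW $\gamma$ on $G_\F$ visits a sequence of Fisher cycles; since $\gamma$ is self-avoiding and each $F_v$ is a cycle, within each visited cycle $\gamma$ either passes straight through (entering at one vertex of $F_v$, leaving at another, traversing a path along the cycle) or, in the case of the first and last cycles, starts or ends partway along. Contracting each $F_v$ to $v$ sends $\gamma$ to a SAW $\bar\gamma$ on $G$, and the key bookkeeping is to count, for each edge-step of $\bar\gamma$, how many steps of $\gamma$ were ``spent'' traversing arcs of Fisher cycles. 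This is where the parity of $d$ enters: in a $d$-cycle, two vertices are joined by two arcs of lengths summing to $d$, so one can always route through an arc of length at most $\lfloor d/2\rfloor$; when $d=2r+1$ the two arcs between a generic pair of entry/exit vertices have lengths $k$ and $d-k$ with $\{k,d-k\}$ ranging over pairs summing to $2r+1$, while when $d=2r$ the worst case forces an arc of length exactly $r$.

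The cleanest route is via generating functions. Writing $\mu=\mu(G)$, $\mu_\F=\mu(G_\F)$, and $\zeta=1/\mu_\F$, I would fix a vertex $v\in V$, root a SAW on $G_\F$ at a vertex of $F_v$, and set up a relation expressing the SAW generating function $Z^{G_\F}$ in terms of the structure of $G$. Passing through one Fisher cycle and moving to a neighbouring cycle contributes a factor equal to the generating function $M(\zeta)$ for the possible arc-lengths used inside a cycle times the one step along the connecting edge of $G$; for $d=3$ the two internal arcs between the two ``non-exit'' configurations have lengths $1$ and $2$, giving the factor $M_1(\zeta)=\zeta^2+\zeta^3$ (up to the precise normalization used in \cite{GrL2}), which is exactly the shape of \eqref{eq:mudiff3}. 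Summing the geometric-type series over the number of Fisher cycles traversed, $Z^{G_\F}$ has the same radius of convergence behaviour as $Z^G$ evaluated at the contracted variable, and equating radii of convergence yields that $1/\mu(G)$ equals the relevant polynomial in $\zeta=1/\mu_\F$ in part (a), and is bounded below by it in parts (b), (c). Part (a) is an equality because when $d=3$ the contraction map is essentially a bijection onto SAWs on $G$ after accounting for the finitely many choices at the endpoints; for $d\ge 4$ it is only an injection from a suitable sub-family (those SAWs on $G_\F$ that always take the shortest available arc, of length $r$ or $r+1$), which is why one obtains an inequality, with the left-hand sides of \eqref{eq:mudiffeven} and \eqref{eq:mudiffodd} being the generating-function contributions of routing through arcs of length $r$ (two such arcs, in the even case) or $r{+}1$ and $r{+}2$ (odd case).

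The main obstacle is the careful treatment of the endpoints of the SAW and of repeated visits: a priori a SAW on $G_\F$ could enter the same Fisher cycle $F_v$ more than once (entering and leaving along different pairs of arc-endpoints), in which case the contracted walk $\bar\gamma$ would revisit $v$ and fail to be self-avoiding. One must check that self-avoidance of $\gamma$ on $G_\F$ forbids this — because two vertex-disjoint arcs of a $d$-cycle together with two incident ``bridge'' edges already use up the cycle, leaving no room for a second passage consistent with $\gamma$ being self-avoiding — and then verify that the contraction really does land in $\Si_n(v)$ on $G$ for the appropriate $n$. For the sharpness claims already asserted in the surrounding text (the Fisher graphs of the $3$- and $4$-regular trees achieving $x_2$ and $y_2$), one checks that for the tree the contraction map is a genuine bijection and every inequality above becomes an equality, so the bound in \eqref{eq:mudiff3}, and the analogous extremal computation for $d=4$, is attained. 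I would present parts (b) and (c) together, since the only difference is the list of admissible shortest-arc lengths, and cite \cite[Thm 1]{GrL2} for part (a) rather than reprove it.
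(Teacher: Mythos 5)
Your overall strategy is the one the paper uses: relate $Z_{G_\F}$ to $Z_G$ through a per-cycle generating factor, cite \cite[Thm 1]{GrL2} for the case $d=3$, and obtain \eqref{eq:mudiffeven}--\eqref{eq:mudiffodd} by comparing radii of convergence, working with SAWs between midpoints of edges to tame the endpoints. The paper, however, runs the correspondence only in the direction you mention second: each SAW $\pi$ on $G$ is \emph{expanded}, at every visited vertex, into one of the two routings (clockwise or anticlockwise) around the Fisher cycle, giving distinct SAWs on $G_\F$ and hence $Z_F(\zeta)\ge Z\bigl(\min\{\zeta^2+\zeta^d,\dots,2\zeta^{r+1}\}\bigr)$ as in \eqref{eq:mudiff2}; letting $\zeta\uparrow 1/\mu(G_\F)$ yields the claim. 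Two points in your write-up need repair. First, your resolution of what you call the main obstacle is false for $d\ge 4$: a SAW on $G_\F$ \emph{can} visit the same Fisher cycle twice, using two vertex-disjoint arcs of the $d$-cycle (each cycle vertex has exactly one external edge, and for $d\ge4$ two disjoint arcs with their four external edges are perfectly compatible with self-avoidance), so the contraction map does not in general produce a SAW on $G$, and any identity of the form $Z_{G_\F}(\zeta)=\sum_{\pi\text{ SAW on }G}(\cdots)$ fails. This does not sink the proposition, because parts (b) and (c) assert only one inequality, for which the expansion direction alone suffices; but you should drop the contraction-based claim rather than rely on it.

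Second, the phrase ``injection from the sub-family of SAWs on $G_\F$ that always take the shortest available arc'' does not produce the stated left-hand sides: the factor $2$ in \eqref{eq:mudiffeven} and the two terms in \eqref{eq:mudiffodd} come from allowing \emph{both} routings at each visit. The entry/exit distance $k_i$ along the cycle is dictated by the SAW on $G$, not chosen by you, so the per-visit factor is $\zeta^{k_i+1}+\zeta^{d-k_i+1}$, and one needs the elementary observation (the ``minimum'' step in the paper, or AM--GM) that for $0<\zeta\le1$ this is at least $2\zeta^{r+1}$ when $d=2r$, and at least $\zeta^{r+1}+\zeta^{r+2}$ when $d=2r+1$. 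With these two corrections your plan coincides with the paper's proof.
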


\begin{figure}
\centerline{\includegraphics[width=0.6\textwidth]{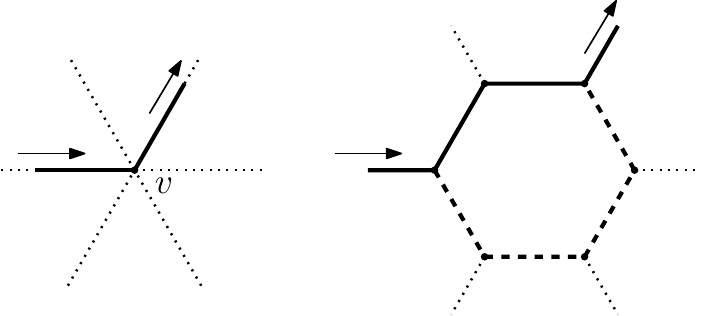}}
\caption{A degree-$6$ vertex $v$ is replaced by a Fisher $6$-cycle.
A SAW passing through $v$ may be redirected around the cycle as shown.
The entry and exit of the SAW at the Fisher cycle
traverses either $2$ edges clockwise, or $4$ edges anticlockwise.}
\label{fig:f4}
\end{figure}

\begin{proof}[Proof of Proposition \ref{lem:f4}]
We use the methods of \cite{GrL2}, where a proof of part (a) appears at Theorem 1.
(Reference \cite{GrL2} was directed at \emph{simple} graphs only, but the proof of \cite[Thm 1]{GrL2}
is valid also in the non-simple case. Indeed, there exists a unique non-simple $G \in \sN_3$.) 

Here is an outline of the proof. Consider SAWs on $G$ and $G_\F$
that start and end at midpoints of edges. Given such a SAW $\pi$ on $G$, we shall construct a corresponding SAW
$\pi'$ on $G_\F$. When $\pi$ reaches a vertex $v$ of $G$, $\pi'$ is directed
around the corresponding $d$-cycle $C$ of $G_\F$. There are $d-1$
possible exit points of $\pi'$ from $C$. For each such point, $\pi'$ may
be sent around $C$ either clockwise or anticlockwise 
(as illustrated in Figure \ref{fig:f4}). If the exit lies $s$ ($\le d/2$)
edges along $C$ from the entry, 
a single step of $\pi$ becomes a walk of
length either $s+1$ or $d-s+1$. 
Such a substitution is made at each vertex of $\pi$, resulting in a  SAW $\pi'$
on $G_\F$.

We formalize the above, thereby extending the arguments of \cite{GrL2}.
Let $d=2r\ge 4$ 
(the case of odd $d$ is similar).  
Write  $G=(V,E)$ and $G_\F=(V_\F,E_\F)$. The set $E$ may be considered as a
subset of $E_\F$.  By the argument leading to \cite[eqn (15)]{GrL2}, it suffices 
to consider SAWs on $G_\F$ that begin and end at midpoints of edges of $E$.

The generating function of SAWs beginning at a given
midpoint $e$ of $E$ on the graph $G$ is given by
\begin{equation}\label{eq:new456}
Z(\zeta) = \sum_{\pi\in \Si(G)} \zeta^{|\pi|},
\end{equation}
where $\Si(G)$ is the set of such SAWs. Let $Z_\F$ be the generating
function of SAWs on $G_\F$ starting at $e$ and ending in the set of midpoints of $E$.
The function $Z_\F$ is derived as follows. For $\pi\in\Si(G)$, let
$e_0,e_1,\dots,e_n$ be the midpoints visited by $\pi$, and let $C_i$
be the Fisher cycle of $G_\F$ touching $e_i$ and $e_{i+1}$. 
Considering the $e_i$ as midpoints of $E_\F$, let $k_i$ be the
length of the shorter of the two routes from $e_i$ to $e_{i+1}$
around $C_i$. We replace the product $\zeta^{|\pi|}$ in \eqref{eq:new456}
by 
$$
P_\pi(\zeta) := \prod_{i=0}^{n-1} (\zeta^{k_i+1}+\zeta^{d-k_i+1})
$$
to obtain
$$
Z_\F(\zeta) = \sum_{\pi\in\Si(G)} P_\pi(\zeta).
$$
Since $1\le k_i \le r$, we deduce that
\begin{equation}\label{eq:mudiff2}
Z_\F(\zeta) \ge Z\bigl(\min\{\zeta^2+\zeta^d, \zeta^3+\zeta^{d-1}, \dots, 2\zeta^{r+1}\}\bigr), \qq \zeta \ge 0.
\end{equation}
The radius of convergence of $Z_\F$ is $1/\mu(G_\F)$, and 
\eqref{eq:mudiffeven} follows from \eqref{eq:mudiff2} on letting
$\zeta\uparrow 1/\mu(G_\F)$ and noting that the minimum
in \eqref{eq:mudiff2} is achieved by $2\zeta^{r+1}$.
\end{proof}

\begin{lemma}\label{uc3}
Let $G=(V,E)\in\sG_{3,3}$. 
\begin{letlist}
\item For $v\in V$, there exists exactly one 
triangle passing through $v$. 
\item If each such triangle of $G$ is 
contracted to a single vertex,
the ensuing graph $G'$ satisfies $G'\in \sG_3$.
\end{letlist}
\end{lemma}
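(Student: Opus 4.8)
The plan is to deduce part (b) from part (a), so I would establish the triangle structure first.

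\emph{Part (a).} Since $G$ has girth $3$ it contains at least one triangle, and vertex-transitivity carries it to every vertex, so each $v\in V$ lies on at least one triangle; moreover transitivity makes the number $t$ of triangles through a vertex the same for all vertices. To show $t=1$, I would argue by contradiction. If $v$ lies on two distinct triangles, these between them use the edges at $v$; since $\deg v=3$ and two distinct triangles would otherwise require $2+2=4$ distinct edges at $v$, they must share an edge $\langle v,u\rangle$, say $T=\{v,u,a\}$, $T'=\{v,u,b\}$ with $a\ne b$. Then $\pd v=\{u,a,b\}$ and $\pd u=\{v,a,b\}$. Writing $a'$ for the third neighbour of $a$ (so $a'\notin\{v,u\}$), if $a'=b$ then $\{v,u,a,b\}$ induces $K_4$, which is then a finite $3$-regular component and hence all of $G$, contradicting infiniteness; so $a'\notin\{u,a,b\}$, i.e.\ $a'\notin\pd v\cup\pd u$. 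Inspecting the three pairs of edges at $a$ (only $\{\langle a,v\rangle,\langle a,u\rangle\}$ closes up, since $v\not\sim a'$ and $u\not\sim a'$) shows $a$ lies on exactly one triangle, contradicting $t\ge 2$. Hence $t=1$. In particular distinct triangles are vertex-disjoint, so the triangles of $G$ partition $V$ into blocks of size $3$.

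\emph{Part (b), easy parts.} Let $\sT$ be the set of triangles; by the above $|\sT|=|V|/3=\oo$. Each vertex of a triangle $T$ has exactly two neighbours inside $T$ and hence exactly one outside, so $T$ emits exactly three edges of $G$ leaving it. If all three landed on a single triangle $T'$ they would form a perfect matching between $T$ and $T'$, making $T\cup T'$ a $3$-regular component on six vertices, hence all of $G$ — impossible. Thus (assuming simplicity, below) $G'$ is $3$-regular. Connectedness of $G$ pushes forward to $G'$. Finally, an automorphism of $G$ permutes triangles (it preserves cycles and girth, hence the unique-triangle property), so it descends to an automorphism of $G'$; and given $T,T'$, an automorphism sending some $v\in T$ to some $v'\in T'$ must send $T$ to the unique triangle through $v'$, namely $T'$, so $\Aut(G)$ acts transitively on $\sT$.

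\emph{Part (b), the crux.} It remains to show $G'$ is simple, and here vertex-transitivity (not merely cubicness) is essential. Suppose two of the three out-edges of $T=\{v_1,v_2,v_3\}$ go to the same triangle $T'$; by the previous paragraph not all three do, so the third out-edge, from $v_3$ say, goes to a triangle $T''\ne T,T'$, while the two edges to $T'$ emanate from $v_1,v_2$. Consequently exactly one $G$-edge joins $T$ to $T''$. Pick $\phi\in\Aut(G)$ with $\phi(v_1)=v_3$. Then $\phi(T)$ is the unique triangle through $v_3$, so $\phi(T)=T$; since the out-neighbour $w_1$ of $v_1$ lies in $T'$ and hence outside $T$, its image $\phi(w_1)$ lies outside $\phi(T)=T$, so $\phi(w_1)$ is the (unique) out-neighbour of $v_3$, which lies in $T''$, forcing $\phi(T')=T''$. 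As $\phi$ is a graph automorphism, the number of $G$-edges between $T$ and $T'$ equals that between $T$ and $T''$, i.e.\ $2=1$, a contradiction. Hence no two triangles are doubly joined, $G'$ is simple, and $G'\in\sG_3$. I expect this last step — converting the configuration into a contradiction via transitivity rather than a purely local edge count — to be the main obstacle; the rest is routine bookkeeping.
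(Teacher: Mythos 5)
Your proof is correct. Part (a) is essentially the paper's argument: both force the two triangles through a vertex to share an edge and then rule out the resulting configuration using that $G$ is infinite (the paper closes the configuration into a $K_4$ directly; your split into the cases $a'=b$ and $a'\ne b$ amounts to the same thing). Likewise your treatment of cubicity, connectedness and transitivity of $G'$ matches the paper's. The genuine divergence is in the crux you identified, the simplicity of $G'$. The paper argues that a double edge between triangles $T_1,T_2$ puts each endpoint of the two connecting edges on both a triangle and a quadrilateral, propagates this local property to every vertex by transitivity, and then disposes of the possibilities by a (rather terse) case analysis at a vertex of $T_1$ not incident to the two connecting edges. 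You instead exploit transitivity globally: an automorphism $\phi$ with $\phi(v_1)=v_3$ must fix $T$ setwise by the uniqueness in (a), hence carries the out-neighbour of $v_1$ to that of $v_3$ and so carries $T'$ to $T''$, and invariance of the number of edges between a pair of triangles gives $2=1$. Your route buys a cleaner and more fully checkable contradiction (the paper leaves "consideration of the various possible cases" to the reader), at the price of needing the preliminary observation that exactly one edge joins $T$ to $T''$, which you correctly extract from the fact that each vertex has exactly one neighbour outside its triangle; the paper's route is more local and generalises to the girth-4 analogue (Lemma \ref{up4}), where the same propagation-of-local-structure idea is reused.
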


\begin{proof}
(a) Assume the contrary: 
each $u\in V$ lies in two or more triangles.
Since $\deg(u)=3$, there exists $v\in V$ such that 
$\langle u,v\rangle$
lies in two distinct triangles, and we write $w_1$, $w_2$ for
the vertices of these triangles other than $u$, $v$. Since each $w_i$ has degree $3$, we have than $w_1\sim w_2$. 
This implies that $G$ is finite, which is a contradiction.

(b) Let $\sT$ be the set 
of triangles in $G$, so that the elements of $\sT$ are vertex-disjoint.
We contract each $T \in \sT$ to a vertex,  thus obtaining the graph 
$G'=(V',E')$. Since each vertex of $G'$ arises from a triangle of $G$, 
the graph $G'$ is cubic, and $G$ is the Fisher graph of $G'$. 
Since $G$ is infinite, so is $G'$.

We show next that $G'$ is transitive. 
Let $v_1',v_2'\in V'$, and write $T_i=\{a_i,b_i,c_i\}$,
$i=1,2$,  for the
corresponding  triangles of $G$. Since $G$ is transitive,
there exists $\a\in\Aut(G)$ such that $\a(a_1) = a_2$.
By part (a), $\a(T_1)=T_2$. Since $\a\in\Aut(G)$,
it induces an automorphism $\a'\in\Aut(G')$ such that
$\a'(v_1')=v_2'$, as required.

Finally, we show that $G'$ is simple. If not, there exist
two vertex-disjoint triangles of $G$, $T_1$ and $T_2$ say, 
with two edges between their vertex-sets. Each vertex
in these two edges belongs to (two) faces of size $3$ and $4$.
By transitivity, every vertex has this property.
By consideration of the various possible cases, one arrives
at a contradiction. 
\end{proof}

\begin{proof}[Proof of Theorem \ref{g3}] 
Since $G$ is the Fisher graph of $G'\in \sG_3$,
by Proposition \ref{lem:f4}(a),
\begin{equation*}
\frac{1}{\mu(G)^2}+\frac{1}{\mu(G)^3}=\frac{1}{\mu(G')}.
\end{equation*}
By \cite[Thm 4.1]{GL-Comb},
\begin{equation*}
\sqrt{2}\leq \mu(G')\leq 2,
\end{equation*}
and \eqref{uc-2} follows. When $G'$ is the $3$-regular tree $T_3$, 
we have $\mu(G')=2$, and the upper bound is achieved.
\end{proof}

The following lemma is preliminary to the proof of
Theorem \ref{g4}.
 
\begin{lemma}\label{up4}
Let $G=(V,E)\in\sG_{3,4}$. 
If $G$ is not the doubly infinite ladder 
$\LL$, 
each $v \in V$ belongs to exactly one \qua. 
\end{lemma}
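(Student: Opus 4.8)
The plan is to argue by contradiction, assuming that some vertex $v$ lies in two distinct \qua s (quadrilaterals), and to show that this forces $G$ to be the doubly infinite ladder $\LL$. First I would fix $v$ with neighbours $a,b,c$, and suppose two \qua s $Q_1,Q_2$ both pass through $v$. Since a \qua\ through $v$ uses exactly two of the three edges at $v$, by pigeonhole $Q_1$ and $Q_2$ share an edge at $v$, say $\langle v,a\rangle$; write $Q_1 = v,a,x,b,v$ and $Q_2 = v,a,y,c,v$ for suitable $x,y$. (Here $x\ne b$, $y\ne c$ because $G$ is simple, and $x\ne y$ since otherwise we would get a triangle or a multi-edge, contradicting girth $4$.) Now the three neighbours of $a$ are exactly $v,x,y$ — they are distinct (girth $4$ rules out coincidences and the degree is $3$), so $a$ has its full neighbourhood determined, and symmetrically each of $b,c$ has a neighbour in $\{v,x,y\}$.

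Next I would exploit transitivity: every vertex of $G$ must lie in at least two \qua s, since $v$ does. The key structural step is then to analyse the local picture around the edge $\langle v,a\rangle$, which now sits in two \qua s; the two \qua s together with the vertices $v,a,b,c,x,y$ and the edges among them form precisely a \lq rung region\rq\ of the ladder. Concretely, $b$ has neighbours $v$, $x$, and one further vertex $b'$; $c$ has neighbours $v$, $y$, and one further vertex $c'$; and $x$ (resp.\ $y$) has a third neighbour beyond $a,b$ (resp.\ $a,y$'s neighbours). I would show that the \qua s through $b$ and through $c$ force $x$ and $y$ to be joined appropriately and the pattern to propagate: the subgraph so far is a finite piece of $\LL$, and at each boundary edge the same argument (two \qua s through every vertex, girth $4$, degree $3$) extends the ladder pattern by one more rung. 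An induction on the rungs then shows $G$ contains a copy of $\LL$ as a spanning subgraph; since $\LL$ is already cubic, $G=\LL$.

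The main obstacle I anticipate is the bookkeeping of cases in the local analysis: one must carefully rule out configurations where the \qua s through neighbouring vertices \lq fold back\rq\ and create short cycles (triangles or additional \qua s sharing too many vertices) or multi-edges, and where the two third-neighbours $b',c'$ could coincide with already-named vertices. Each such coincidence must be shown to contradict either simplicity, girth $4$, degree $3$, or transitivity. I would organise this by first pinning down the closed neighbourhood of the edge $\langle v,a\rangle$ completely (six named vertices, a fixed edge set), verifying it is forced, and then observing that the two \lq outer\rq\ edges of this block (the ones leading to $b'$ and $c'$) are each themselves in a configuration isomorphic to the starting one, so the propagation is automatic. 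A clean way to finish is to note that the established pattern already gives every vertex degree $3$ within the ladder subgraph, so no further edges are possible, forcing $G\cong\LL$ and hence, when $G\ne\LL$, each vertex lies in at most — hence exactly — one \qua.
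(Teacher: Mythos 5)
There is a genuine gap at the very first step: your pigeonhole argument only shows that $Q_1$ and $Q_2$ share \emph{at least} one edge at $v$, but you then immediately write $Q_1=(v,a,x,b)$ and $Q_2=(v,a,y,c)$ with $b\ne c$, i.e.\ you silently assume they share \emph{exactly} one edge at $v$. You have omitted the configuration in which the two \qua s use the same two edges at $v$, say $Q_1=(v,a,z_1,b)$ and $Q_2=(v,a,z_2,b)$ with $z_1\ne z_2$, differing only in the vertex opposite $v$. This case is not vacuous and cannot be brushed aside: it is perfectly consistent with degree $3$, girth $4$ and simplicity locally (it is exactly the local picture in $K_{3,3}$ or in a theta-like block), so ruling it out genuinely requires the infiniteness and transitivity of $G$. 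The paper splits the proof into precisely these two cases; its Case (a) (shared pair of edges) takes a nontrivial argument — one notes that $a$ then lies in three \qua s, namely $Q_1$, $Q_2$ and $(a,z_1,b,z_2)$, pairwise sharing exactly one incident edge of $a$, transfers this property to every vertex by transitivity, and then examines the third neighbour of $v$ and of $z_2$ to reach a contradiction with degree $3$ and infiniteness. Without an argument of this kind your proof only covers the paper's Case (b), so the lemma is not established.

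Your treatment of the case you do consider (one shared edge at $v$) is in the spirit of the paper's Case (b), but be aware of a further subtlety you wave at in the phrase ``the propagation is automatic'': when you extend the ladder pattern rung by rung you must also justify that the process does not terminate in a half-infinite ladder with two boundary vertices of degree $2$ inside the forced subgraph; the paper handles this by invoking transitivity again (there is a uniform bound $D$ with $d_{G\setminus e}(a,b)\le D$ for every edge $e=\langle a,b\rangle$), which forces the pattern to close up into the doubly infinite ladder $\LL$. You should also be careful with claims such as ``every vertex must lie in at least two \qua s, since $v$ does'': transitivity gives this, but the stronger statement the induction really needs is the paper's property that every incident edge of a vertex lies in one of its two chosen \qua s, and that property must itself be verified before being propagated.
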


\begin{figure}
\centerline{\includegraphics[width=0.7\textwidth]{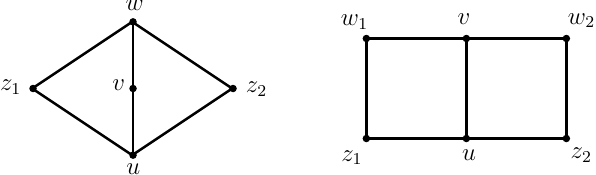}}
\caption{The two situations in the proof of Lemma \ref{up4}.}
\label{fig:2quads}
\end{figure}

\begin{proof}
Let $G=(V,E)\in \sG_{3,4}$ and $v\in V$. Assume $v$ belongs
to two or more  \qua s. We will deduce that  $G=\LL$.

By transitivity,  there exist 
two (or more) \qua s passing through every vertex $v$, and we
pick two of these, denoted $C_{v,1}$, $C_{v,2}$. 
Since $v$ has degree $3$, exactly one of the following occurs
(as illustrated in Figure \ref{fig:2quads}).
\begin{letlist}
\item $C_{v,1}$ and $C_{v,2}$ share two edges incident to $v$.
\item $C_{v,1}$ and $C_{v,2}$ share exactly one edge incident to $v$.
\end{letlist} 

\emph{Assume first that Case (a) occurs}, and let $\Pi_x$ be the property
that $x \in V$ belongs to three (or more) \qua s, any two of which share 
exactly one incident edge of $x$, these $\binom 32=3$ edges being distinct.

Let $\langle u,v\rangle$ and 
$\langle w,v\rangle$ be the two edges shared by $C_{v,1}$ 
and $C_{v,2}$, and write $C_{v,i}=(u,v,w,z_i)$,
$i=1,2$. 
Since  $u$ lies in the \qua s $C_{v,1}$, $C_{v,2}$, and $(u,z_1,w,z_2)$, we have that $\Pi_u$ occurs.
By transitivity, $\Pi_x$ occurs for every $x$.

Let $x$ be the adjacent vertex of $v$ other than $u$ and $w$. 
Note that $x\notin\{z_1,z_2\}$ and $x \not\sim u,w$, 
since otherwise $G$ would have girth $3$. By $\Pi_v$, either 
$x\sim z_1$ or $x \sim z_2$. Assume without loss of generality
that $x\sim z_1$. If $x\sim z_2$ in addition, then $G$ is finite,
which is a contradiction. Therefore, $x \not\sim z_2$.

Let $y$ be the incident vertex of 
$z_2$ other than $u$ and $w$, and note that $y\notin\{u,v,w,x,z_1,z_2\}$. By $\Pi_{z_2}$, 
there exists a \qua\ containing both $\langle y,z_2\rangle$ and 
$\langle z_2,u\rangle$. Since $u$ has degree $3$, 
either $y\sim z_1$ or $y \sim v$. However, neither is possible 
since both $z_2$ and $v$ have degree $3$.
Therefore, Case (a) does not occur. 

\emph{Assume Case (b) occurs}, and write $C_{v,i}=(u, v, w_i, z_i)$, 
$i=1,2$, for the above two \qua s passing through $v$.  Let $\Pi_x^2$
(\resp, $\Pi_x^3$) 
be the property that $x \in V$ belongs to two \qua s (\resp,
three
\qua s), and each 
incident edge of $x$ lies in at least 
one of these \qua s (\resp, every pair
of incident edges of $x$ lies in at least one of these \qua s).  
Since $\Pi_v^2$ occurs,
by transitivity $\Pi_x^2$ occurs for every $x \in V$.

Since $G$ is infinite, there exists a \lq new' edge incident to
the union of $C_{v,1}$ and $C_{v,2}$. Without loss of generality,
we take this as $\langle z_1,x\rangle$ with 
$x \notin\{u,v,w_1,w_2,z_1,z_2\}$.
By $\Pi_{z_1}^2$, there exists a \qua\ of the form $(z_1,x,y,z)$. 
Since $G$ is simple with degree $3$ and girth $4$, and $d_G(y,z_1)=2$, 
$y \notin \{z_1,u,v,w_1,w_2\}$. 

\begin{figure}
\centerline{\includegraphics[width=0.41\textwidth]{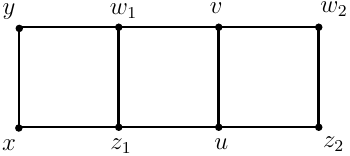}}
\caption{The second diagram of Figure \ref{fig:2quads} is extended by one further \qua.}
\label{fig:3quads}
\end{figure}

We prove next that $y \ne z_2$.
If $y=z_2$, then $\Pi^3_u$ occurs, whence $\Pi_{z_1}^3$ occurs 
by transitivity. Therefore, there exists a \qua\ passing through
the two edges $\langle x,z_1\rangle$, $\langle z_1,w_1\rangle$,
and we denote this $(x,z_1,w_1,y')$. It is immediate that
$y' \notin \{u,v,w_2,z_2\}$ since $G$ is simple with degree $3$
and girth $4$, and therefore $y'$ is a \lq new' vertex.
By $\Pi_{w_1}^3$, $y' \sim w_2$, and $G$ is finite, a contradiction.
Therefore, $y \ne z_2$, and hence $y$ is a \lq new' vertex, and $z=w_1$.
In summary, the two \qua s of the right side of Figure \ref{fig:2quads}
have been extended by adding a third \qua\ on the left side, as
illustrated in  Figure \ref{fig:3quads}. By inspection of 
the latter figure, we see that $\Pi^2_u$ occurs but not $\Pi^3_u$.

We now iterate the above procedure, adding at each stage a new \qua\ to
the graph already obtained. Suppose $G$ contains a finite, connected subgraph $S$ of
the ladder $\LL$ comprising $k$ ($\ge 3$) \qua s. Since $G$ is infinite, it contains some \lq new' edge
$e$ with exactly one endpoint in $S$. By the above considerations applied to $e$, we deduce
that $G$ contains a subgraph of $\LL$ comprising $k+1$ \qua s. 
We continue by induction to find that $G=\LL$. 
\end{proof}

\begin{proof}[Proof of Theorem \ref{g4}]
If $G=\LL$, then $\mu=\phi$, which satisfies \eqref{uc-1}. We may therefore assume that $G \ne \LL$.

Let $\sT$ 
be the set of \qua s of $G$. By Lemma \ref{up4}, each vertex lies in exactly one element of $\sT$. 
We contract each element of $\sT$ to a degree-$4$ vertex, 
thus obtaining a graph $G'$.  We claim that 
\begin{equation}\label{eq:claim}
\text{$G'\in\sN_4$, and $G$ is a Fisher graph of $G'$.}
\end{equation}
Suppose for the moment that \eqref{eq:claim} is proved.
By \cite[Thm 4.1(b)]{GL-Comb},  $\mu(G') \ge \sqrt 3$, and,
by Proposition \ref{lem:f4}(b),
\begin{equation*}
\frac2{\mu(G)^3} \leq \frac 1{\mu(G')} \le \frac{1}{\sqrt{3}},
\end{equation*}
which implies $\mu(G)\geq 12^{1/6}$.

We prove \eqref{eq:claim} next. It suffices that $G'
=(V',E') \in \sN_4$,
and $G$ is then automatically the required Fisher graph. Evidently,
$G'$ has degree $4$. We show next that $G'$ is transitive. 
Let $v_1',v_2'\in V'$, and write $C_i=(a_i,b_i,c_i,d_i)$,
for the unique  \qua\ of $G$ corresponding to $v_i'$. Since $G$ is transitive,
there exists $\a\in\Aut(G)$ such that $\a(a_1) = a_2$.
By Lemma \ref{up4}, $\a(C_1)=C_2$. Since $\a\in\Aut(G)$,
it induces an automorphism $\a'\in\Aut(G')$ such that
$\a'(v_1')=v_2'$, as required.  In conclusion, \eqref{eq:claim} holds.

For the upper bound, we refer to the following proof of the more general Theorem \ref{g5}.
\end{proof}

\begin{proof}[Proof of Theorem \ref{g5}]
Let $G\in \sG_{d,g}$ where $d,g\ge 3$, and 
let $F$ ($\in \sG_{d,g}$) be the given free product graph (see the statement of the theorem, and the remark
that follows it). 
By \cite[Thm 11.6]{Wo}, $F$ covers $G$.
Therefore, there is an injection from SAWs on $G$ with a given root to 
a corresponding set on $F$,  whence $\mu(G)\le\mu(F)$.

By \cite[Thm 3.3]{Gilch}, $\mu(F)=1/\zeta$ where $\zeta$ is the 
smallest positive real root of \eqref{eq:theeq}. 
\end{proof}

\section{The Grigorchuk group}\label{sec:Grig}

The Grigorchuk group $\Ga$ was introduced in \cite{Grig80} (see also the more recent
papers \cite{Grig84, RG05}) as a group of intermediate growth. It
is defined as follows.
Let $T$ be the rooted binary tree with root labelled $\es$. 
The vertex-set of $T$ can be identified with the set of finite strings $u$ having entries $0$, $1$, 
where the empty string corresponds to the root $\es$.
Let $T_u$ be the subtree of all vertices with root labelled $u$. 

Let $\Aut(T)$ be the automorphism group of $T$, and
let $a\in\Aut(T)$ be the automorphism
that, for each string $u$, interchanges the two vertices $0u$ and $1u$
together with their subtrees. 

Any $\g\in\Aut(T)$ may be applied
in a natural way to either subtree $T_i$, $i=0,1$. 
Given two elements 
$\g_0,\g_1\in\Aut(T)$, we define $\g=(\g_0,\g_1)$ to be the automorphism
of $T$ obtained by applying $\g_0$ to $T_0$ and $\g_1$ to $T_1$.
Define automorphisms $b$, $c$, $d$ of $T$ recursively as follows:
\begin{equation}\label{eq:grigrels}
b=(a,c),\quad c=(a,d),\quad d=(\id,b),
\end{equation}
where $\id$ is the identity automorphism. 
The Grigorchuk group $\Ga$ is defined as the subgroup of  $\Aut(T)$ generated by the 
set $S=\{a,b,c\}$. 
Denote by $G$ the Cayley graph of $\Ga$ endowed with
the generator set $S$. Since each element of $S$ has order $2$, 
we may label an edge of $G$
by the corresponding generator; an edge labelled $g$ is called a $g$-edge.  

\begin{figure}[htbp]
\centerline{\includegraphics*[width=0.45\hsize]{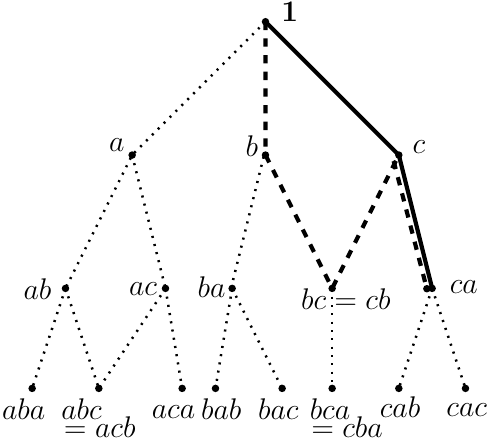}}
   \caption{The $3$-neighbourhood of the identity. The walk $(\id,c,ca)$
   may be re-routed as $(\id,b,bc,c,ca)$.}
   \label{fig:G}
\end{figure}

The $3$-neighbourhood of  $\id$ in the Cayley graph $G$ of $\Ga$
is drawn in Figure \ref{fig:G}. Since 
$G \in \sG_{3,4}$, we have by Theorem \ref{g4}  that $y_1\le \mu(G) \le y_2$ 
where the $y_i$ are given in \eqref{y1} and \eqref{y0}.
The lower bound $\mu(G) \ge y_1$ may be improved as follows.

\begin{theorem}\label{thm:Grig}
The Cayley graph $G$ of the Grigorchuk group $\Ga$ satisfies $\mu(G) \ge \phi$.
\end{theorem}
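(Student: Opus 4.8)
The plan, following Malyshev, is to equip $G$ with a \lq direction\rq\ imported from the orbital Schreier graphs of $\Ga$ acting on the boundary $\partial T$ of the binary tree, and then to run the argument of Section~\ref{sec:proof1}. This is needed because $\Ga$ possesses no graph height function at all \cite{GL-amen}, so that Theorem~\ref{thm:1}(b) is unavailable; the Schreier graph will supply the missing \lq longitudinal\rq\ structure. For $\omega\in\partial T=\{0,1\}^{\NN}$, let $\sS_\omega$ denote the orbital Schreier graph: its vertices are the $\Ga$-orbit of $\omega$, with an $s$-labelled edge $\{v,sv\}$ for each $s\in\{a,b,c\}$ and each orbit vertex $v$ (loops and parallel edges permitted, since $a,b,c$ are involutions). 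Using the description of $G$ by left multiplication (legitimate since $\{a,b,c\}=\{a,b,c\}^{-1}$), the orbit map $\pi_\omega\colon\g\mapsto\g\omega$ is a label-preserving graph morphism of $G$ onto $\sS_\omega$.

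For suitable $\omega$ (for instance $\omega$ not eventually constant), $\sS_\omega$ is quasi-isometric to a half-line and carries, by the well-known self-similar substitution structure of the finite-level Schreier graphs, a self-avoiding ray $R=(R_0=\omega,R_1,R_2,\dots)$ together with a \lq longitudinal\rq\ coordinate $\ell\colon V(\sS_\omega)\to\ZZ$ which increases by $1$ along $R$ and changes by a bounded amount across every edge, the bounded \lq decorations\rq\ being quadrilaterals, loops and parallel edges. I would then pull back: set $h:=\ell\circ\pi_\omega\colon V(G)\to\ZZ$ and verify that $h$ satisfies the hypotheses \eqref{qm}--\eqref{ag} of Theorem~\ref{thm:1}(c) for $G\in\sG_{3,4}$ with $\g=2$. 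For example, along a plain stretch of backbone, $m$ moves one step forward along $R$ while $q$ lands on the loop-lift at the next backbone vertex, so that $h(qm(u))>h(u)$ and $h(mqm(u))>h(m(u))$ are straightforward; the content is to check these, and especially \eqref{ag}, at the sparse decorated vertices, using the local flexibility of the quadrilaterals (as in the re-routing $(\id,c,ca)\mapsto(\id,b,bc,c,ca)$ of Figure~\ref{fig:G}). Granting this, Theorem~\ref{thm:1}(c) yields $\mu(G)\ge\phi$. An equivalent route is to build self-avoiding walks directly inside $\sS_\omega$ by the recipe of Definition~\ref{saw-def} applied to $\ell$, and then lift: a self-avoiding walk of $\sS_\omega$ from $\omega$, recorded by its word of edge-labels, lifts to a self-avoiding walk of $G$ from $\id$, since no contiguous subword of the word can equal $\id$ in $\Ga$ (such a subword would fix $\omega$ and force a repetition downstairs), and distinct such walks have distinct lifts; hence $|\Si_n(\id)|\ge\eta_n$, and \eqref{eq:3} applies.

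The main obstacle is precisely the verification that the chosen direction works at \emph{every} vertex. Since $\sS_\omega$ has linear volume growth, the forward direction contributes nothing exponential, so the entire factor $\phi$ must be recovered from the decorations; one must show, from the substitutional description of the Grigorchuk Schreier graphs --- equivalently, from the short relators of $\Ga$ visible near $\id$ (Figure~\ref{fig:G}) --- that $\ell$ has no strict local maximum, that the move $q$ never sends a vertex strictly backwards, and that \eqref{ag} survives at the decorated vertices (in the Schreier-graph formulation: that usable detours recur along $R$ with bounded gaps, so that all $\eta_n=|\WW_n|$ routings remain self-avoiding). This combinatorial bookkeeping, together with the possible need to perturb $\ell$ on finitely many local configurations, is the technical heart of the argument, and is exactly where the detailed structure of the orbital Schreier graphs of the Grigorchuk group must be invoked.
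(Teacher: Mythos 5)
Your starting point (the orbital Schreier graph of a boundary ray, following Malyshev) is the same as the paper's, but both concrete routes you propose break down, and the part you defer as ``the technical heart'' is in fact the entire content of the theorem. First, no function of the form $h=\ell\circ\pi_\omega$ can satisfy the hypotheses of Theorem \ref{thm:1}(c): such an $h$ is constant on the fibres of the orbit map, and the Schreier graph $S(1^\oo)$ contains, infinitely often, a double edge labelled $\{b,c\}$ between consecutive positions $v,v'$ (case (a) of Section \ref{sec:Grig}). If $u$ lies over $v$ and its rightward connection is such a double edge, then $m(u)\in\{bu,cu\}$, say $bu$, and the third neighbour of $bu$ (other than $u$ and $m(bu)$) is $cbu$, whose Schreier image is again $v$; hence $h(qm(u))=h(u)$, and the strict inequality \eqref{qm} fails. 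This is not a removable defect: the word HVHV at such a $u$ is mapped by Definition \ref{saw-def} to the walk $u,\,bu,\,cbu,\,bcbu,\,cbcbu=u$, wrapping around the quadrilateral arising from $(bc)^2=\id$ (Figure \ref{fig:G}) (or it revisits $bu$ one step sooner, depending on the fixed choice of $m(cbu)$), so the construction genuinely produces non-self-avoiding walks; and any ``perturbation of $\ell$'' still lives on the Schreier graph, hence remains fibre-constant and cannot restore strictness. (Recall also that $G$ admits no graph height function at all.) Second, the ``equivalent route'' of lifting self-avoiding walks of the Schreier graph is quantitatively hopeless: $S(1^\oo)$ is a decorated half-line, a SAW on it can traverse no loop and must march monotonically away from the root, so the number of $n$-step SAWs from $1^\oo$ is at most $2^{\lceil n/2\rceil}$, which is exponentially smaller than $\eta_n\sim\phi^n$; the inequality $|\Si_n(\id)|\ge\eta_n$ cannot be obtained by lifting downstairs SAWs.

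The paper's proof is structured quite differently, precisely to get around these two obstacles: it lifts walks on $S(1^\oo)$ that are \emph{not} self-avoiding downstairs (rightward walks allowed to traverse loops), shows via \eqref{eq:new900} that a failure of self-avoidance upstairs would force a cycle of length at least $3$ in $S(1^\oo)$, augments the family by the quadrilateral detours $bcba$, $cbca$ at the double-edge sections (with a separate rerouting argument proving these lifts are still SAWs), and then recovers the constant $\phi$ by a generating-function computation: each section between consecutive $a$-edges contributes either $Z_2(\zeta)=\zeta^2+2\zeta^3+\zeta^4$, with $Z_2(1/\phi)=1$ exactly, or $Z_1(\zeta)=2\zeta^2+2\zeta^4$, with $Z_1(1/\phi)>1$, and $Z_1$ occurs infinitely often. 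None of this substitution scheme or counting appears in your proposal; supplying it (or an equivalent mechanism that extracts the factor $\phi$ from the loop and quadrilateral decorations rather than from a height function) is what a proof requires.
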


\begin{proof}
The main ideas of this proof are due to Anton Malyshev, who has kindly given 
permission for them to be included here.
A \emph{ray} of $T$ is a SAW on $T$ starting at $\es$. The collection of all infinite rays is called
the \emph{boundary} of $T$ and denoted $\pd T$. Since each $\g\in\Ga$
preserves the root $\es$, the orbit of any $v \in T$ is a subset of the generation of $T$ containing $v$.
Since $\g\in\Ga$ preserves adjacency, $\g$ maps $\pd T$ into $\pd T$.

The \emph{orbit} $\Ga \rho$ of $\rho \in \pd T$  gives rise to a graph, 
called the \emph{orbital Schreier graph} of $\rho$, and denoted here by $S(\rho)$.
The vertex-set of $S(\rho)$ is $\Ga\rho$. For $\rho_1,\rho_2\in \Ga\rho$,
$S(\rho)$ has an edge between $\rho_1$ and $\rho_2$ if and only if $\rho_2=x\rho_1$ for some $x \in \{a,b,c\}$; 
we label this edge with the generator $x$ and call it an $x$-edge. 
(Recall that $x^2=\id$ for $x\in\{a,b,c\}$.)
Such orbital Schreier graphs have been studied in \cite{Grig-S,GLN,VY} and the references therein.

Let $1^\oo$ denote the rightmost infinite ray of $T$, 
with orbital Schreier graph $\sS:=S(1^\oo)$ illustrated
in Figure \ref{fig:one-end}.
It is standard (see, for example, \cite[Thm 7.3]{Grig-S} and \cite[p.\ 29]{VY})
that, if $\rho \in \Ga 1^\oo$, $S(\rho)$ is graph-isomorphic to 
the \emph{singly infinite} graph $\sS$
(the edge-labels may depend on the choice of $\rho$). 
If $\rho \notin \Ga 1^\oo$, $S(\rho)$ is graph-isomorphic to a certain \emph{doubly infinite} chain which does not feature in this proof.

\begin{figure}[htbp]
\centerline{\includegraphics*[width=0.8\hsize]{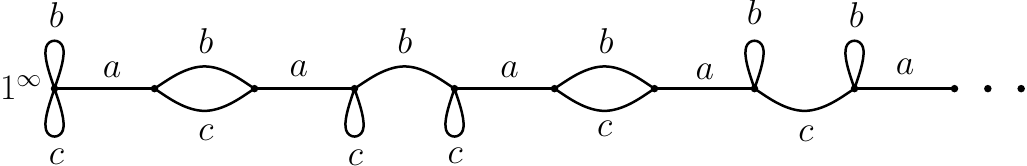}}
   \caption{The one-ended orbital Schreier graph $\sS$ of the ray $1^\oo$.}
   \label{fig:one-end}
\end{figure}

Let $\sW_0$ be the set of labelled walks on $\sS$ starting at the root $1^\oo$ that, at each step, either move one
step rightwards or pass around a loop (no loop may be traversed more than once).
Members of $\sW_0$ may be considered as words 
in the alphabet $\aleph=\{a,b,c\}$ without consecutive repetitions. 
Walks in $\sW_0$ are not generally self-avoiding on $\sS$, but we shall
see next that they give rise to a certain set $\ol{\sW_0}$
of self-avoiding walks on $G$ starting at its root $\id$.

Each  $w \in \sW_0$ lifts to a distinct walk $\ol w$ on $G$. Furthermore, we claim that
\begin{equation}\label{eq:new900}
\text{$\ol w$ is a SAW on $G$.}
\end{equation}
To see \eqref{eq:new900}, suppose $\ol w$ is
not a SAW. Then $w$ contains some shortest subword  $s$ of length $3$ or more
satisfying $s=\id$. On considering the action of $\Ga$ on $\sS$
(see Figure \ref{fig:one-end}), we deduce
that $\sS$ contains a cycle of length $3$ or more. By inspection of $\sS$, this is 
seen to be a contradiction.

Let $R=\{r_1, r_2,\dots\}$ be the set of right-hand 
endpoints of the $a$-edges of $\sS$,
labelled in the order they are encountered
when moving to the right from $1^\oo$ (in the sense of Figure \ref{fig:one-end}). An ordered pair of elements $r_i,r_j\in R$ is called
\emph{consecutive} if $|i-j|=1$. Let $\sW$ be the subset of $\sW_0$
containing words that end in $a$. 
As above, $\sW$ lifts to a set $\ol\sW$ of SAWs on $G$.
It turns out that $\sW$ is not sufficiently large to obtain $\mu(G)\ge \phi$,
and therefore we shall need to augment $\sW$ to a larger set
$\sW'$ of words, as follows. 

We think of the set $R$ as being points of renewal of walks in $\sW$.
More specifically, each $w\in \sW$ can be broken into sections  (called \emph{units})
beginning and ending (\resp) with a  consecutive pair $z, z'\in R$,
and each  unit $\si$ may be any of the following.
\begin{letlist}
\item If both $b$ and $c$ are rightward edges from $z$, 
$\si$ is a word in $\{ba,ca\}$.

\item If $b$ is rightward from $z$, and $c$ is a loop at $z$, 
 $\si$ is a word in $\{ba,cba,bca,cbca\}$. 

\item If $c$ is rightward from $z$, and $b$ is a loop at $z$, 
$\si$ is a word in $\{ca,bca,cba,bcba\}$. 
\end{letlist}
We now augment $\sW$ by replacing (a) by (a$'$), as follows.
\begin{letlist}
\item [(a$'$)] 
If both $b$ and $c$ are rightward edges from $z$, let $\si$
be any word in the set $\{ba,ca,bcba,cbca\}$. 
\end{letlist}
Let $\sW'$ be the superset of $\sW$ (viewed as sets of words
in the alphabet $\aleph$) comprising words ending in $R$, without consecutive repetitions,
and satisfying (a$'$), (b), (c). 
Let $\ol{\sW'}$ be the set of walks on $G$ obtained as lifts of elements of $\sW'$.
As above, each $w' \in \sW'$ lifts to a distinct walk $\ol {w'}\in \ol{\sW'}$.

\begin{lemma}\label{eq:new901}
Every $\ol {w'} \in \ol{\sW'}$ is a SAW on $G$. 
\end{lemma}

The proof of this lemma is given after the end of the current proof. 
The generating function $Z$ of $\ol{\sW'}$ (see \eqref{eq:genfn}) may be expressed 
in the form
$$
Z(\zeta) = A_0\sum_{n=0}^\oo A_1A_2\cdots A_n,
$$
where $A_0=2\zeta^2$ and each $A_k$, for $k \ge 1$, is either 
$$
Z_1(\zeta)=2\zeta^2+2\zeta^4 \q\text{or} \q Z_2= \zeta^2 + 2\zeta^3+ \zeta^4.
$$ 
Furthermore, $Z_1$ appears infinitely often in the sequence $(A_k:k=1,2,\dots)$.
Since $Z_1(1/\phi)>1$ and $Z_2(1/\phi)=1$, 
we have that $Z(\zeta)=\oo$ for $\zeta > 1/\phi$. The claim
of the theorem follows.
\end{proof}

\begin{proof}[Proof of Lemma \ref{eq:new901}]
For clarity of exposition, we consider first a single instance
of the \lq additional' subword $bcba$ in (a$'$), which we view
as a substitute for the unit $ca$ of (a) (the same argument applies to $cbca$ 
viewed as a substitute for $ba$). 
Let $w =x(ca)y \in \sW$ where $x$,
$y$ are words terminating with the letter $a$ (we allow $y$ to be empty), 
and let $w'=x(bcba)y$ be obtained from $w$ 
by replacing the instance of $ca$ by $bcba$. Thus, $\ol w$
is routed along the image of the $2$-path $(\id,c,ca)$ (under the action of $x$)
as indicated in Figure \ref{fig:G};
similarly,  $\ol {w'}$ is obtained from $\ol w$ by replacing this $2$-path 
by the image under $x$
of the $4$-path $(\id,b,bc=cb,c,ca)$ of the figure. The lifted walk $\ol {w'}$
fails to be a SAW only if $\ol w$ visits either 
$xb$ or $xbc$. 

There is a notational complication, arising from the need to distinguish between
elements of $\Ga$, words in the alphabet $\aleph$, and the walks on $G$ that the
last generate.

A. \emph{Suppose  $\ol w$ visits the vertex $xb$}.
\begin{romlist}
\item 
By inspection of Figure \ref{fig:one-end}, 
we have that $xb\in \sW_0$. By  \eqref{eq:new900}, $\ol x$ does not visit the vertex $xb$ of $G$
since that would contradict the fact that $\ol {xb}$ is a SAW.
Therefore, $\ol w$ visits $xb$ \emph{after} it visits the vertex $x$ of $G$.
That is, $y$ begins with a subword $y'$, with length at least $4$,
such that $x(ca)y'$ lifts to a SAW
from $\id$ to $xb$.

\item The subword $y'$ cannot end with the letter $b$ since, if it did, the penultimate vertex of $\ol{x(ca)y'}$
would be $x$, in contradiction of the fact that $\ol{x(ca)y'}$ lifts to a SAW.  

\item
Suppose $y'$ ends with the letter $a$. By inspection of Figure \ref{fig:one-end}, 
$x(ca)y'b\in\sW_0$, and hence  $x(ca)y'b$ lifts to a SAW $\ol {x(ca)y'b}$. However, $\ol {x(ca)y'b}$ 
contains a cycle containing the vertex $x$, a contradiction.

\item
Suppose $y'$ ends with the letter $c$. 
The penultimate letter of $y'$ is either $a$ or $b$. It cannot be $b$ since that would imply that
the SAW $\ol{x(ca)y'}$ visits the vertex $xc$ twice.

Therefore, $y'=y''ac$ for some word $y''$, and furthermore $\ol{x(ca)y''a}$ ends at $xbc$. As above, we have that
$x(ca)y''ab\in\sW_0$, and hence  $\ol{x(ca)y''ab}$ is a SAW. However, $\ol {x(ca)y''ab}$ 
contains a cycle containing the vertex $xc$, a contradiction.

\end{romlist}

B. \emph{Suppose  $\ol w$ visits the vertex $xbc$ but not the vertex $xb$}.
\begin{romlistp}
\item 
The walk $\ol w$ cannot visit the vertex $xbc$ before it visits the vertex $xc$, since any such visit to
$xbc$ must be followed by $xc$, in contradiction of the fact that $\ol w$ is a SAW.
That is, $y$ begins with a subword $y'$ such that $x(ca)y'$ lifts to a SAW
from $\id$ to $xbc$.

\item The subword $y'$ cannot end with $b$, since that would require two visits by the SAW $\ol w$ to the vertex $xc$.

\item The subword $y'$ cannot end in $a$, as in (iii) above.

\item If $y'$ ends in $c$, then the penultimate vertex of 
$\ol{x(ca)y'}$ is $xb$, which is excluded by assumption.
\end{romlistp}

We conclude that any substitution of a single unit
gives rise to a word in $\sW'$ that lifts to a SAW on $G$.

Suppose next that several units of a word $w\in \sW$ are altered
by substitutions of the form of (a$'$) above.
Such substitutions necessarily involve distinct units of $w$.
We consider these substitutions one by one, in the natural order of $w$.
If the new walk, which will be denoted $\ol{v'}$,  
is not self-avoiding, there is an earliest substitution which creates a cycle.
The above argument may be applied to that substitution to obtain a contradiction
in a manner similar to the above. 
We expand this slightly as follows.

Let $w=x(ca)y\in \sW$ (the case $w=x(ba)y\in \sW$ is handled similarly), and let
$w'=x(bcba)y$ be derived from $w$ by substituting $bcba$ for $ca$. 
Suppose further that certain substitutions have already 
been made to some of the units of the initial word $x\in \sW$, 
resulting in a new word $x'$. Write $v=x'(ca)y$
and $v'=x'(bcba)y$, noting that 
\begin{equation}\label{contra2}
\begin{aligned}
\text{the walks}&\ \text{$\ol w$, $\ol{w'}$, $\ol{v}$, $\ol{v'}$ 
traverse the same set of $a$-edges of $G$,}\\
&\text{in the same order and in the same directions.}
\end{aligned}
\end{equation}
 Suppose 
\begin{equation}\label{contra1}
\text{$\ol{v}$ is a SAW, but $\ol{v'}$ is not}.
\end{equation}
We will obtain a contradiction, 
and the full claim of the lemma follows.
By \eqref{contra1}, $\ol v$ visits either $xb$ or $xc$.

C. \emph{Suppose  $\ol v$ visits the vertex $xb$}.
\begin{romlist}
\item 
We prove first that $\ol{x'}$ does not visit $xb$. Suppose the converse.
By \eqref{contra2}, the final letter of both $x$ and $x'$ is $a$, so that the final step of
$\ol x$ and $\ol{x'}$ is along the directed edge
$[xa,x\rangle$. In particular, $\ol{x'}$ does not
traverse the edge $\langle x,xb\rangle$ in either direction.
As in A(i) above, $\ol x$ does not visit $xb$.
By \eqref{contra2}, $\ol {x'}$ cannot traverse the edge $\langle xba,xb\rangle$
in either direction. The claim follows.

\item
Therefore, $\ol v$ visits $xb$ \emph{after} it visits the vertex $x$ of $G$.
That is, $y$ begins with a subword $y'$ such that both $x(ca)y'$ and 
$x'(ca)y'$ lift to SAWs from $\id$ to $xb$.
This leads to a contradiction as in A(ii)--(iv) above.

\end{romlist}
D. \emph{Suppose  $\ol v$ visits the vertex $xbc$ but not the vertex $xb$}.
\begin{romlistp}
\item 
The walk $\ol v$ cannot visit the vertex $xbc$ before it visits the vertex $xc$, since any such visit to
$xbc$ must be followed immediately 
by $xc$, in contradiction of the fact that $\ol v$ is a SAW.

\item Therefore, $y$ begins with a subword $y'$ such that $x'(ca)y'$ lifts to a SAW
from $\id$ to $xbc$. This leads to a contradiction as in B(ii$'$)--(iv$'$) above.

\end{romlistp}

The proof is concluded.
\end{proof}

\section{Transitive TLF-planar graphs}\label{sec:tlf}

\subsection{Background and main theorem}
We consider next the class of so-called \lq topologically locally finite, planar graphs' (otherwise
known as TLF-planar graphs),
as defined in the next paragraph.
The basic properties of such graphs were presented in \cite{DR09}, to which the
reader is referred for further information.
In particular, the class of TLF-planar graphs includes  the one-ended
planar Cayley graphs and the transitive tilings (including the square, triangular, and hexagonal lattices).

We use the word \emph{plane} to mean a simply connected Riemann surface 
without boundaries. An \emph{embedding} of a graph $G=(V,E)$ in a plane $\sP$ is a function
$\eta:V\cup E \to \sP$ such that $\eta$ restricted to $V$
is an injection and, for $e=\langle u,v\rangle\in E$, $\eta(e)$ is a $C^1$ image of $[0,1]$. 
An embedding is ($\sP$-)\emph{planar} if the images of
distinct edges are disjoint except possibly at their endpoints, 
and a graph is ($\sP$-)\emph{planar} if
it possesses a ($\sP$-)planar embedding.
An embedding is  \emph{topologically locally finite} (\emph{TLF}) if the images of
the vertices have no accumulation point, and a
connected graph is called \emph{TLF-planar} if it possesses a planar TLF embedding. 
Let $\TLF_d$ denote the class
of transitive,  TLF-planar graphs with vertex-degree $d$.  We shall
sometimes confuse a TLF-planar graph with its TLF embedding.
The \emph{boundary} $\pd S$ of $S \subseteq \sP$ is defined by
$\partial S:= \ol S \cap \ol{\sP\setminus S}$, where $\ol T$ is the closure of a
subset $T$ of $\sP$. 

The  principal theorem of this Section \ref{sec:tlf} is as follows.

\begin{theorem}\label{tm93}
Let $G\in\TLF_3$ be infinite. Then $\mu(G)\geq \phi$.
\end{theorem}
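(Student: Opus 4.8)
The plan is to reduce Theorem~\ref{tm93} to the criteria already established in Theorem~\ref{thm:1}, by exploiting the rigid combinatorial structure of infinite, transitive, cubic, TLF-planar graphs. The first step is to record the classification of such graphs. A transitive TLF-planar cubic graph has a well-defined \emph{face-vector} $\arc{p,q,r}$ recording the sizes of the three faces meeting at each vertex (with the convention that $\oo$ is allowed, as for the ladder $\LL=\arc{4,4,\oo}$); the Euler-characteristic / angle-sum condition $\frac1p+\frac1q+\frac1r$ being $<1$, $=1$, or $>1$ separates the hyperbolic, Euclidean, and spherical cases. Since $G$ is infinite the spherical case is excluded, so $\frac1p+\frac1q+\frac1r\le 1$. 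I would cite \cite{DR09} for the fact that $G$ is determined up to graph isomorphism by this vector (together with, in a few degenerate small cases, a small amount of extra combinatorial data), and that the Euclidean cubic possibilities are exactly $\HH=\arc{6,6,6}$, $\arc{4,6,12}$, $\arc{4,8,8}$, $\arc{3,12,12}=\HH_\F$, and the ladder $\LL=\arc{4,4,\oo}$.

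The second step handles the finitely many Euclidean cases by hand, all of which have in fact already been dispatched in Section~\ref{ex:ex}: $\LL$ has $\mu=\phi$ exactly; $\HH$, $\arc{4,6,12}$, and $\arc{4,8,8}$ satisfy $\mu\ge\phi$ by Example~\ref{examples} and items D, E, F of Section~\ref{sec:exs} (each supporting a harmonic function of the type in Remark~\ref{rem:lesser}, obtained by taking $h$ to be a horizontal coordinate in a natural planar embedding); and $\HH_\F=\arc{3,12,12}$ has $\mu>\phi$ since it is the Fisher graph of $\HH$ and $\mu(\HH)>\phi$. So the substance of the theorem is the hyperbolic case $\frac1p+\frac1q+\frac1r<1$.

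The third, and main, step is the hyperbolic case. Here the idea is to embed $G$ in the hyperbolic plane as a regular (or semi-regular) tiling, and to construct a function $h:V\to\RR$ using a ``height'' coordinate read off from the isometric tiling — concretely, one fixes a bi-infinite geodesic (or horocycle) $\ell$ and lets $h(v)$ be a signed distance from $\ell$, suitably normalised so that $h$ takes manageable values. The goal is to verify the hypotheses of Theorem~\ref{thm:1}(c): that $G\in\sQ_{3,g}$ for $g=\min(p,q,r)\ge 3$ (automatic), and that for every $u\in V$ one has the ``monotone-lift'' inequalities $h(qm(u))>h(u)$, $h(mqm(u))>h(m(u))$, and $h((qm)^\g q(u))>h(u)$ with $\g=\lceil\frac12(g-1)\rceil$. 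The content of these inequalities is that, starting from a vertex, the move ``$m$ then $q$'' and its iterates make strict progress in the $h$-direction until one has moved far enough that no short cycle can close up; in a hyperbolic tiling this is essentially the statement that geodesics and the combinatorial rays built from face-boundaries diverge, which is where hyperbolicity is used. By transitivity it suffices to check these for a single representative vertex $u$ in a fundamental domain, so this becomes a finite case check once the embedding and the choice of $h$ are pinned down — but making that check uniform across all $\arc{p,q,r}$ with $\frac1p+\frac1q+\frac1r<1$ is the delicate point.

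\textbf{The main obstacle} I anticipate is precisely this last uniformity: one must either (i) choose the geodesic/horocycle and the normalisation of $h$ so that the three inequalities of Theorem~\ref{thm:1}(c) hold simultaneously and uniformly over all hyperbolic $\arc{p,q,r}$, possibly after splitting off a few small-parameter exceptional tilings to be checked individually; or (ii) find, instead of a single $h$, a transitive graph height function $(h,\sH)$ and invoke Theorem~\ref{thm:1}(b) — which would be cleaner but is not obviously available for all such $G$ (indeed Example~\ref{ex:arch4} warns that $\arc{4,6,12}$, a Euclidean TLF-planar graph, has no transitive graph height function, and a similar obstruction may occur hyperbolically). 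I would therefore expect the proof to go via route (i): set up the hyperbolic embedding, define $h$ as a normalised height to a fixed geodesic, verify \eqref{qm} and \eqref{ag} by a geometric estimate on how fast the combinatorial ``$qm$-orbit'' escapes along the tiling, and clean up the Euclidean cases separately by reference to the earlier examples.
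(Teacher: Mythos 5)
Your first two steps (the TLF-planar classification from \cite{DR09} and the finite list of Euclidean cases $\arc{6,6,6}$, $\arc{3,12,12}$, $\arc{4,8,8}$, $\arc{4,6,12}$, $\arc{4,4,\oo}$, each disposed of by earlier results) match the paper. The gap is in your third step, which is the whole substance of the theorem and is left as a hope rather than an argument. First, the reduction you invoke -- ``by transitivity it suffices to check \eqref{qm} and \eqref{ag} for a single representative vertex in a fundamental domain'' -- is invalid for the function you propose: a signed distance to a fixed geodesic or horocycle is invariant only under the stabiliser of that geodesic, not under a group acting transitively on $G$, so it is not difference-invariant and the inequalities must be verified at \emph{every} vertex; vertices at different (combinatorial) distances from the reference curve see genuinely different local behaviour of $h$, and nothing in your sketch controls this. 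Indeed the paper points out (Example \ref{ex:arch4}) that even the Euclidean graph $\arc{4,6,12}$ admits no transitive \ghf, so there is no reason to expect an equivariant height to exist hyperbolically, and your route (ii) is blocked for the same reason you yourself note. Second, Theorem \ref{thm:1}(c) with $\g=\lceil\frac12(g-1)\rceil$ becomes very weak protection when the girth is small: for type-vectors $\arc{3,2n,2n}$, $\arc{4,2n,2p}$, $\arc{4,6,2p}$ and $\arc{5,8,8}$ the short faces force a separate mechanism, and your plan does not supply one beyond ``split off a few exceptional tilings''. These are not a few exceptions: they are infinite families, and they are exactly where the difficulty lies.

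For comparison, the paper does not use a height function at all in the hyperbolic regime. It encodes each word in $\WW_n$ as a sequence of right/left turns of a non-backtracking path in the planar embedding (rules \eqref{eq:rule1}--\eqref{eq:rule2}), proves a discrete Gauss--Bonnet-type bound (Lemmas \ref{l95}--\ref{l96}) stating that every cycle has turning number $\rho(C)\ge 5$ or $6$ when all faces are large, and shows (Lemma \ref{cm}) that the constructed paths never accumulate a right--left imbalance exceeding $3$, so no subpath can close into a cycle; this yields the injection $\WW_n\to\Si_n(v)$ uniformly in $\arc{k_1,k_2,k_3}$ with $\min k_i\ge 5$, $\arc{k_1,k_2,k_3}\ne\arc{5,8,8}$. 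The small-face families are then handled by contracting triangles, quadrilaterals or pentagons (Fisher-type reductions and projected-walk arguments in Sections \ref{caseA2}--\ref{caseD}), which is precisely the machinery your proposal lacks. As written, your proposal identifies the right skeleton but leaves the central verification unproved and rests it on a symmetry reduction that does not hold, so it does not constitute a proof.
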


The principal methods of the proof are as follows: (i) the construction of
an injection from eastward SAWs on $\LLp$ to SAWs on $G$,
(ii) a method for verifying that certain paths on $G$ are indeed SAWs,
and (iii) the generalized Fisher transformation of \cite{GrL2} and Section \ref{sec:girth}. 

A \emph{face} of a TLF-planar graph (or, more accurately, of its embedding)
is an arc-connected component of the (topological) complement of the graph.
The \emph{size} $k(F)$ of a face $F$ is the number of vertices in its topological boundary,
if bounded; an unbounded face has size $\oo$.
Let $G=(V,E)\in \TLF_d$ and  $v\in V$.
The \emph{type-vector} $\arc{k_1,k_2,\dots,k_d}$ of 
$v$ is the sequence of sizes of  the 
$d$ faces incident to $v$, taken in cyclic order around $v$. 
Since $G$ is transitive, the type-vector is independent 
of choice of $v$ modulo permutation of its elements, and furthermore each
entry satisfies $k_i \ge 3$. 
We may therefore refer to the type-vector $\arc{k_1,k_2,\dots,k_d}$
of $G$, and we define
\begin{equation*}
f(G)=\sum_{i=1}^d \left(1-\frac2{k_i}\right),
\end{equation*}
with the convention that $1/\oo = 0$.
We shall use the following two propositions.

\begin{proposition}[{\cite[p.\ 2827]{DR09}}]\label{th91}
Let $G=(V,E)\in\TLF_3$.
\begin{letlist}
\item If $f(G)<2$, $G$ is finite and has a planar TLF embedding in the sphere.
\item If $f(G)=2$, $G$ is infinite and has a planar TLF embedding in the Euclidean plane.
\item If $f(G)>2$, $G$ is infinite and has a planar TLF embedding 
in the hyperbolic plane (the Poincar\'e disk).
\end{letlist}
Moreover, all faces of the above embeddings are regular polygons.
\end{proposition}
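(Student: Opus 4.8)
The plan is to combine the combinatorial Gauss--Bonnet formula with an explicit developing-map construction. For a vertex $v$ of the cubic transitive graph $G$ with type-vector $\arc{k_1,k_2,k_3}$, set $\kappa(v):=2\pi-\sum_{i=1}^3\pi(1-2/k_i)=\pi(2-f(G))$; this is the angle deficit at $v$ that one would see if each of the three incident faces were a regular Euclidean polygon of the relevant size, an unbounded face being read as an apeirogon of interior angle $\pi$, consistently with $1-2/\oo=1$. A double-counting argument --- each face of size $k$ meeting exactly $k$ corners, and $\sum_F k(F)=2|E|=3|V|$ for a cubic graph --- yields $\sum_v\kappa(v)=2\pi\chi(G)$ whenever $G$ is finite, where $\chi(G)=|V|-|E|+|F|$ and hence equals $2$.

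First I would settle which geometry occurs. If $f(G)<2$ then $\kappa(v)>0$ is a positive constant, and an infinite $G$ would force $\sum_v\kappa(v)=\oo$, impossible for a planar graph since uniformly positive combinatorial curvature forces finiteness; hence $G$ is finite with $\chi(G)=2$, a spherical map. If $f(G)>2$ then $\kappa(v)<0$, so $G$ cannot be finite (that would force $\sum_v\kappa(v)=4\pi>0$), hence $G$ is infinite; and if $f(G)=2$ then $\kappa(v)=0$, which again precludes finiteness since a finite cubic planar map has $\sum_v\kappa(v)=4\pi\ne 0$.

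Next I would produce the embedding with regular-polygon faces by a developing map into the model surface $X$ --- the round sphere, the Euclidean plane, or the hyperbolic plane according to the sign of $2-f(G)$. The point is that $X$ carries a regular $k_i$-gon with prescribable interior angle $\alpha_i$: in the sphere $\alpha_i$ ranges over $(\pi(1-2/k_i),\pi)$, in the Euclidean plane $\alpha_i=\pi(1-2/k_i)$ exactly, and in the hyperbolic plane over $(0,\pi(1-2/k_i))$, with unbounded faces realised as ideal or horocyclic polygons of angle $0$ or $\pi$. Since $\sum_i\pi(1-2/k_i)=\pi f(G)$, in the Euclidean case the three angles at a corner already sum to $2\pi$, and in the spherical and hyperbolic cases one selects a common edge length making $\sum_i\alpha_i=2\pi$ by the intermediate value theorem. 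Placing such a corner at a base vertex and developing along the edges of $G$, transitivity of $\Aut(G)$ keeps the local pictures coherent, the relation $\sum_i\alpha_i=2\pi$ makes the developing map a local homeomorphism, and the simple connectedness of $X$ upgrades it to a genuine embedding. Local finiteness together with bounded face-angles and a positive lower bound on edge lengths then shows that the vertex images do not accumulate in $X$, so the embedding is TLF; by construction all faces are regular polygons.

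The main obstacle is the developing step: one must verify that the monodromy around every face --- hence around every cycle --- of $G$ is trivial, so that the developing map is well defined and injective, and one must carry the unbounded faces through the argument without spoiling the angle bookkeeping or the TLF property, which in the hyperbolic case means ruling out vertices escaping to the boundary circle at finite ``time'', a consequence of the uniform lower bound on edge length. The finiteness claim for $f(G)<2$ is the other delicate point, resting on the classification of vertex-transitive planar graphs of uniformly positive combinatorial curvature.
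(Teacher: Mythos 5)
The first thing to say is that the paper contains no proof of Proposition \ref{th91} to compare with: it is quoted verbatim from \cite[p.\ 2827]{DR09}, and the citation is the paper's entire argument. Your strategy --- combinatorial Gauss--Bonnet to decide the geometry, then a developing map realizing every face as a regular polygon of a common edge length chosen so that the three angles at each vertex sum to $2\pi$ --- is the natural route and is in the spirit of the construction underlying \cite{DR09}. The parts you do carry out are sound: the identity $\sum_v\kappa(v)=2\pi\chi$ for finite maps is correct, and it does rule out finiteness when $f(G)\ge 2$.

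However, two steps are genuinely missing rather than merely sketched. First, in the case $f(G)<2$ your finiteness argument is circular: you exclude infinite $G$ by appealing to ``uniformly positive combinatorial curvature forces finiteness,'' which is exactly the assertion at issue, and you then rest the ``delicate point'' on the classification of positively curved vertex-transitive planar graphs, a result at least as strong as the proposition itself. An actual argument is needed here (for instance a boundary-controlled Euler-formula count over the subcomplex of faces meeting a large ball, or an explicit appeal to a theorem of DeVos--Mohar type), and note that when some $k_i=\oo$ the boundary contributions are not controlled face-by-face, so this is not automatic. Second, the developing step is labelled ``the main obstacle'' but never executed, and the implication as stated is false: a local homeomorphism into a simply connected surface need not be injective or surjective (restrict any covering to a proper open subset), so ``$\sum_i\alpha_i=2\pi$ plus simple connectedness of $X$'' does not yield an embedding. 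The correct route is to metrize the union $S$ of the closed faces of the given TLF embedding by the chosen regular polygons, verify that $S$ is a complete, simply connected surface of the relevant constant curvature (possibly a proper convex piece, e.g.\ a strip, when faces of size $\oo$ occur), and then invoke the covering-space/Killing--Hopf argument to identify $S$ with (a convex subset of) the model space; completeness, the treatment of unbounded faces, and the non-accumulation of vertex images all need explicit verification. Until these points are closed your text is a plausible outline rather than a proof; given that the paper itself only cites \cite{DR09}, quoting that classification directly is the honest alternative.
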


There is a moderately extensive literature concerning the function $f$ and the Gauss--Bonnet 
theorem for graphs. See, for example, \cite{chen^2,HigY,Keller}.
 
\begin{proposition}\label{th92}
The type-vector of an infinite graph $G \in \TLF_3$ 
is one of the following:
\begin{Alist}
\item $\arc{m,m,m}$ with $m \ge 6$,
\item $\arc{m,2n,2n}$ with $m \ge 3$ odd, and 
$m^{-1}+n^{-1}\le\frac12$,
\item $\arc{2m,2n,2p}$ with $m,n,p\ge 2$ and
$m^{-1}+n^{-1}+p^{-1}\le 1$. 
\end{Alist} 
\end{proposition}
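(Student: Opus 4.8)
The plan is to combine the classification of Proposition \ref{th91} with elementary arithmetic. Since $G$ is infinite and cubic, Proposition \ref{th91} tells us that the type-vector $\arc{k_1,k_2,k_3}$ satisfies $f(G)=\sum_{i=1}^3(1-2/k_i)\ge 2$, i.e.
\begin{equation}\label{eq:th92-1}
\frac1{k_1}+\frac1{k_2}+\frac1{k_3}\le \frac12,
\end{equation}
with each $k_i\in\{3,4,5,\dots\}\cup\{\oo\}$. So the combinatorial heart of the proof is simply to enumerate all multisets $\{k_1,k_2,k_3\}$ (unordered, since the type-vector is defined modulo permutation) satisfying \eqref{eq:th92-1}, subject to one further parity constraint coming from planarity: \emph{around any face of size $k$, the cyclically-alternating structure of a cubic planar graph forces certain $k_i$ to be even}. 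Concretely, two faces that share an edge at a vertex $v$ — and in a cubic graph every pair of faces incident to $v$ shares exactly one edge at $v$ — must, if the shared edge-structure is to close up consistently, have compatible parities; the standard fact (see \cite{DR09}) is that in an infinite transitive cubic TLF-planar graph, an odd face size can occur at most once in the type-vector, and if it does occur the other two entries are equal and even. This is exactly the case split into A, B, C.

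First I would dispose of the parity input. If all three $k_i$ are even, write $k_i=2m,2n,2p$: this is case C, and \eqref{eq:th92-1} becomes $\tfrac1{2m}+\tfrac1{2n}+\tfrac1{2p}\le\tfrac12$, i.e. $m^{-1}+n^{-1}+p^{-1}\le 1$, with $m,n,p\ge 2$ (since $k_i\ge 3$ even forces $k_i\ge 4$). If exactly one $k_i$ is odd — say $k_1=m$ odd — then the two faces on either side of the $m$-gon at $v$ meet it along two distinct edges, and transitivity together with the alternating colouring of faces around the $m$-cycle forces those two faces to have the same (necessarily even) size $2n$; this gives case B, with the constraint $m^{-1}+(2n)^{-1}+(2n)^{-1}\le\tfrac12$, i.e. $m^{-1}+n^{-1}\le\tfrac12$, and $m\ge3$ odd. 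Finally, if all three $k_i$ are odd, the same alternating argument around any face yields a contradiction (an odd cycle cannot be $2$-coloured consistently with the face-colouring), \emph{unless} all three faces at $v$ coincide, i.e. $k_1=k_2=k_3=m$; then \eqref{eq:th92-1} reads $3/m\le\tfrac12$, so $m\ge 6$, and combined with $m$ even (which follows when the three faces are equal and we examine a vertex on the far side of an edge) we are back inside case A. This leaves case A as $\arc{m,m,m}$ with $m\ge 6$: here no parity obstruction arises because a single face type is self-compatible, and \eqref{eq:th92-1} alone gives $m\ge 6$.

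The only remaining point is to verify there are no further constraints, i.e.\ that \emph{every} triple in the lists A, B, C is actually realized by some infinite transitive cubic TLF-planar graph — but the proposition as stated only asserts the type-vector is \emph{one of} these, so realizability is not needed and can be omitted (it follows anyway from the tiling constructions of \cite{DR09}). I would therefore structure the write-up as: (1) quote \eqref{eq:th92-1} from Proposition \ref{th91}(b,c); (2) record the parity lemma for cubic planar graphs (odd face sizes are equal and at most one, unless all three are equal); (3) run the three-way case split above, in each case translating \eqref{eq:th92-1} into the displayed inequality.

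The main obstacle I anticipate is making the parity argument in step (2) rigorous and self-contained rather than merely citing \cite{DR09}: one must argue carefully that, around a face $F$ of odd size $k$, the faces adjacent to $F$ alternate between exactly two types, so $k$ odd forces those two types to coincide, and then use vertex-transitivity to propagate this to a statement about the type-vector. This is a short but slightly delicate topological-combinatorial argument about the cyclic structure of faces around a vertex and around a face in a cubic planar graph; everything else is routine arithmetic with \eqref{eq:th92-1}.
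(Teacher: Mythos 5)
Your overall strategy is sound and is in fact more self-contained than the paper's: the paper proves Proposition \ref{th92} by citing \cite[p.\ 2828]{DR09} for the identification of the admissible type-vectors and only extracts the inequalities from the condition $f(G)\ge 2$ of Proposition \ref{th91}, whereas you propose to derive the parity restrictions directly from an alternation argument around faces. The arithmetic half of your plan is correct: $f(G)\ge2$ is equivalent to $k_1^{-1}+k_2^{-1}+k_3^{-1}\le\frac12$, and this translates exactly into the three displayed inequalities in cases A, B, C.

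The parity half, however, contains genuine errors. First, your case split (all entries even; exactly one odd; all three odd) omits the case of \emph{exactly two} odd entries, and this case is not killed by the inequality alone: for example $\arc{5,5,12}$ satisfies $f>2$ yet lies in none of the families A, B, C, so it must be excluded by a parity argument your plan never makes. Second, the ``standard fact'' you quote --- that an odd face size occurs at most once and then the other two entries are equal and even --- is false as stated: $\arc{m,m,m}$ with $m$ odd, e.g.\ the order-$3$ heptagonal tiling $\arc{7,7,7}$, is an infinite member of $\TLF_3$ in which an odd size occurs three times. Correspondingly, in your all-odd case the assertion that $m$ must be even (``examine a vertex on the far side of an edge'') is simply wrong; it is also unnecessary, since family A carries no parity restriction, so it should be deleted rather than argued. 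The clean lemma that makes your approach work is: \emph{if some entry $k_i$ of the type-vector is odd, then the other two entries are equal}. This follows from your alternation argument applied to a face $F$ of size $k_i$: across consecutive boundary edges of $F$ the neighbouring faces realise the other two entries alternately, so if those entries are distinct then $k(F)$ is even. Applying this lemma to \emph{every} odd entry yields the full trichotomy with no cases missed: either all entries are equal (A, any parity, $m\ge6$); or some entry is odd and not all are equal, which forces the form $\arc{m,2n,2n}$ with $m$ odd (B); or no entry is odd (C). With the lemma stated in this form and the two-odd case covered, your proof goes through; as written, it does not.
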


Recall that the elements of  a type-vector lie in 
$\{3,4,\dots\}\cup\{\oo\}$.

\begin{proof}
See \cite[p.\ 2828]{DR09} for an identification of the type-vectors
in $\sT_3$. The inequalities on $m,n,p$  arise from the condition 
$f(G) \ge 2$.
\end{proof}

\subsection{Overview and preliminary results}

Let $G \in \TLF_3$  be infinite.
By Proposition \ref{th91}, $f(G) \ge 2$. 
If $f(G)=2$ then, by Proposition
\ref{th92}, the possible type-vectors are precisely those 
with type-vectors $\arc{6,6,6}$, $\arc{3,12,12}$, $\arc{4,8,8}$, $\arc{4,6,12}$, 
$\arc{4,4,\oo}$,
that is, the hexagonal lattice \cite{ds}
and its Fisher graph \cite[Thm 1]{GrL2}, 
the square/octagon lattice \cite[Example 4.2]{GrL3}, 
the Archimedean lattice $\arc{4,6,12}$
of Example \ref{examples}(c), Example \ref{ex:arch4}, and Remark \ref{rem:arch-hex2},
and the doubly infinite ladder  of Figure \ref{fig:2}.
It is explained in the above references that each of 
these has $\mu\ge\phi$.

It suffices, therefore, to prove Theorem \ref{tm93} when
 $G \in \TLF_3$ is infinite with $f(G) > 2$.
By Proposition \ref{th92}, 
the cases to be considered are:
\begin{Alist}
\item $\arc{m,m,m}$ where $m > 6$,
\item $\arc{m,2n,2n}$ where $m\ge 3$ is odd and $m^{-1}+n^{-1}<\frac12$,
\item $\arc{2m,2n,2p}$ where $m,n,p\ge 2$ and $m^{-1}+n^{-1}+p^{-1}<1$.
\end{Alist}
These cases are covered in the following order, as indexed by section number.
\begin{enumerate}
\item [\S\ref{caseA}.] $\min\{k_i\}\ge 5$, $\arc{k_1,k_2,k_3}\ne \arc{5,8,8}$,
\item [\S\ref{caseA2}.] $\min\{k_i\}=3$,
\item [\S\ref{caseB}.] $\arc{4,2n,2p}$ where $p \ge n \ge 4$ and $n^{-1}+p^{-1}<\frac12$,
\item [\S\ref{caseC}.] $\arc{4,6,2p}$ where $p \ge 6$,
\item [\S\ref{caseD}.] $\arc{5,8,8}$.
\end{enumerate}
Note that Section \ref{caseC} includes the case of the
Archimedean lattice 
$\AA=\arc{4,6,12}$ with $f(\AA)=2$ (see also 
Example \ref{examples}(c)).
  
Let the graph $G$ lie in one of the last five categories. 
We identify $G$ with a specific planar, TLF embedding in the hyperbolic plane every face 
of which is a regular polygon. The required proof in each case is similar in overall approach to that of
Theorem \ref{thm:1}. 
Let $\WW_n$ be the set of eastward $n$-step SAWs from $0$ 
on the singly-infinite ladder  $\LLp$
of Figure \ref{fig:2}. Fix a root $v \in V$,
and let $\Si_n(v)$ be the set of $n$-step SAWs on $G$ starting at $v$. 
We shall construct an injection from $\WW_n$ to $\Si_n(v)$, and the inequality $\mu(G) \ge \phi$ will follow
by \eqref{eq:3}.

Let $w\in \WW_n$. For each of the five categories above, we shall explain how the word $w$ encodes
an element of $\Si_n(v)$.
In building an element of $\Si_n(v)$ sequentially, at each stage there is a 
choice between two new edges, which, in the sense of the embedding,
we may call `right' and `left' (when viewed from the previous edge).
The key step is to show that the ensuing paths on $G$ are indeed SAWs
so long as the cumulative differences between the aggregate numbers of right and
left steps remain sufficiently small.

Some preliminary lemmas follow. 
Let $G \in \sT_d$ be infinite,
where $d \ge 3$.
A cycle $C$ of $G$ is called \emph{clockwise} if its orientation
after embedding is clockwise. Suppose a walker traverses $C$ clockwise.
On arriving at a vertex $w$ of $C$, the walker faces $d-1$ possible exits from $w$,
the rightmost of which is designated \lq right' and the leftmost \lq left'
(the other $d-3$ are neither right nor left).
Let $r=r(C)$  (\resp, $l=l(C)$) 
be the number of right (\resp, left)  turns taken by the walker as it traverses $C$ clockwise, and let
\begin{equation}\label{eq:mudef2}
\rho(C)=r(C)-l(C).
\end{equation}

\begin{lemma}\label{lem:st}
Let $G \in \sT_d$ be infinite with $d \ge 3$.
Let $C$ be a cycle of $G$, and let 
$\sF:=\{F_1,F_2,\dots,F_s\}$ be the set of faces enclosed by $C$.
There exists $F \in \sF$ such that the 
boundary of $\sF\setminus F$ is a cycle of $G$.
The set of edges lying in $\partial F \setminus C$ forms a path. 
\end{lemma}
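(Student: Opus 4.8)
\textbf{Proof plan for Lemma \ref{lem:st}.}

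The plan is to argue by induction on the number $s$ of faces enclosed by $C$, combining a planarity/peeling argument with the local structure of a TLF embedding. For the base case $s=1$, the single face $F_1$ has $\partial F_1 = C$, and $\sF \setminus F_1 = \es$ has empty (hence trivially cyclic) boundary, while $\partial F_1 \setminus C$ is empty and so vacuously a path. For the inductive step, assume $s \ge 2$. The key observation is that among the faces enclosed by $C$ there must exist one, say $F$, that is \emph{peripheral} in the following sense: the intersection $\partial F \cap C$ is a single arc (a connected path) of $C$, and hence $\partial F \setminus C$ is a single path $P$ whose two endpoints lie on $C$ and whose interior edges lie strictly inside $C$. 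Removing $F$ then replaces the arc of $C$ along $\partial F$ by the path $P$, producing a new closed walk $C'$; one checks $C'$ is in fact a cycle (no repeated vertices) because $P$ meets $C$ only at its two endpoints, and $C'$ encloses exactly the $s-1$ faces of $\sF \setminus F$. Applying the inductive hypothesis to $C'$ completes the argument.

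First I would make precise the notion of a peripheral face and prove it exists. Consider the finite planar subgraph $H$ consisting of $C$ together with all vertices and edges strictly enclosed by $C$; since $G$ is TLF, this region contains only finitely many vertices, so $H$ is a finite planar graph with outer face the unbounded region outside $C$. Dualize: the faces $F_1,\dots,F_s$ of $\sF$ together with the outer face form the vertices of the planar dual $H^*$, which is connected. Pick $F \in \sF$ whose dual vertex is adjacent to the outer-face vertex in $H^*$ and which can be ``removed from the boundary'' — concretely, a face $F$ such that $\partial F$ and $C$ share at least one edge, and such that after deleting the interior of that shared portion the region stays simply connected. Existence of such an $F$ with $\partial F \cap C$ \emph{connected} is the crux; a clean way is to walk along $C$ in (say) clockwise order and track, for each maximal arc of $C$, which enclosed face lies immediately inside it, then take $F$ to be a face realizing an arc that is not ``interrupted'' — more carefully, since the dual $H^*$ minus the outer vertex still supports a spanning structure, one can always find a leaf-like face $F$ whose contact with the outer boundary $C$ is along a single arc.

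Next I would verify that removing $F$ yields a genuine cycle. Write $\partial F = A \cup P$ where $A = \partial F \cap C$ is the shared arc and $P = \partial F \setminus C$; by the peripherality of $F$, $P$ is a path meeting $C$ exactly at the two endpoints of $A$. The closed walk $C'$ obtained from $C$ by replacing $A$ with $P$ is then self-avoiding: the only possible repeated vertices would be interior vertices of $P$ lying on $C$, which is excluded by construction. Moreover $C'$ bounds the region formerly bounded by $C$ with $F$ excised, which is precisely the union $\sF \setminus F$, a simply connected region (removal of the peripheral face does not disconnect the interior, again by planarity). Hence $C'$ is a cycle of $G$ enclosing the $s-1$ faces of $\sF \setminus F$, and the induction applies to give a face $F'$ of $\sF \setminus F$ whose removal from $\sF \setminus F$ leaves a cyclically-bounded region — but this is a statement about $\sF \setminus F$, and the lemma as stated only asks for the existence of \emph{one} such $F$ for $\sF$ itself, so in fact the inductive machinery is only needed to guarantee the peripheral face exists in general; alternatively one concludes directly: the peripheral face $F$ found above already satisfies both conclusions of the lemma, namely $\partial(\sF \setminus F)$ is the cycle $C'$ and $\partial F \setminus C = P$ is a path.

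\textbf{The main obstacle} will be proving cleanly that a peripheral face — one whose intersection with $C$ is a single arc — always exists. It is tempting to assume this, but in a general planar embedding an enclosed face can touch the outer cycle $C$ along several disjoint arcs, and one must rule out the pathological case where \emph{every} enclosed face touches $C$ in a disconnected set. The resolution is a counting/minimality argument: among all faces touching $C$, choose one minimizing the number of arcs of contact, or equivalently work with the block-cut structure of $H$ and observe that an innermost block whose boundary is a cycle must have a face meeting that cycle along a single arc; since $G$ is simple, transitive, and TLF, the embedded region $H$ inherits enough finiteness and planarity to force this. I expect this to be the only genuinely delicate point; once the peripheral face is in hand, the rest is routine planar bookkeeping.
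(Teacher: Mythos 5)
Your reduction of the lemma to the existence of a \lq peripheral' face is the right reformulation, and your observation that the induction is then superfluous is correct. But the existence of that face \emph{is} the lemma: everything else is bookkeeping, and you have not proved it. You flag it yourself as ``the main obstacle'' and offer several candidate strategies without carrying any of them out, and as stated they do not close the gap. Choosing a face minimizing the number of arcs of contact with $C$ gives no reason why that minimum should be $1$; to make such an argument work you would have to descend into one of the subregions cut off by a multi-arc face (using that two distinct maximal contact arcs are separated by at least one full edge of $C$, so the subregion again contains a face sharing an edge with $C$) and induct on the number of enclosed faces — none of which appears in your proposal. Likewise ``a leaf-like face in the dual'' is not self-justifying: a leaf of a spanning tree of the dual need not meet $C$ in a single arc. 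This last idea is in fact the paper's route: it forms the dual of the set of faces of $\sF$ sharing an edge with $C$ (omitting the infinite face), picks a face $F$ corresponding to a degree-one vertex of a spanning tree, deduces the first claim from connectedness of the remaining subtree, and obtains the path property separately, by noting that otherwise the interior of $C$ would be disconnected. The leaf is used for connectivity of what remains, not to force single-arc contact, which is exactly the step your sketch leaves unproved.

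A second, smaller issue: your definition of peripherality correctly requires the path $P=\partial F\setminus C$ to meet $C$ only at its two endpoints, but your existence discussion speaks only of arcs of edge-contact. For $d\ge 4$ a face can meet $C$ at an isolated vertex while sharing no incident edge of $C$ there; such a contact pinches the boundary of $\sF\setminus F$ into a figure-eight, so it is not a cycle, and your argument that ``the only possible repeated vertices would be interior vertices of $P$ lying on $C$, which is excluded by construction'' presupposes precisely what must be established. (For $d=3$ this cannot happen, since any enclosed face incident to a vertex of $C$ must contain one of the two $C$-edges at that vertex, but the lemma is stated for all $d\ge 3$.) So the proposal is a correct framing with the central existence claim, and this vertex-contact case, left open.
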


\begin{proof}
Let $C$ be a cycle of $G$, and let
$\sF'\subseteq\sF$ be the subset of faces that lie in the bounded
component of $\sP \setminus C$, and that share an edge with $C$.
Let $I$ be the (connected) subgraph of $G$ comprising 
the edges and vertices of the faces in $\sF'$, and
let $I_\rd$ be its dual graph (with the infinite face omitted).
Then $I_\rd$ is finite and connected, and  thus has some 
spanning tree $T$ which is non-empty. Pick a vertex $t$ of $T$ with
degree $1$, and let $F$ be the corresponding face.
The first claim follows since the removal of $t$ from $T$ 
results in a connected subtree.   The second claim holds since,
if not, the interior of $C$ is disconnected, 
which is a contradiction. 
\end{proof}

\begin{lemma}\label{l95}
Let $G \in \sT_d$ be infinite with $d \ge 3$.
For any cycle  $C=(c_0,c_1,\dots,c_n)$  of $G$,
\begin{equation}\label{eq:rl}
\rho(C)\begin{cases} =6+\displaystyle\sum_{i=1}^s \bigr[k(F_i)-6\bigr] &\text{if } d=3,\\
\ge 4+\displaystyle\sum_{i=1}^s \bigr[k(F_i)-4\bigr] &\text{if } d\ge 4,
\end{cases}
\end{equation}
where $\sF=\{F_1,F_2,\dots,F_s\}$ is the set of faces enclosed by $C$.
\end{lemma}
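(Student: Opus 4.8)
\textbf{Proof proposal for Lemma \ref{l95}.}

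The plan is to prove the identity (for $d=3$) and the inequality (for $d\ge 4$) by induction on $s$, the number of faces enclosed by $C$, using Lemma \ref{lem:st} to peel off one boundary face at a time. For the base case $s=1$, the cycle $C$ is the boundary of a single face $F_1$, traversed clockwise. Walking clockwise around a regular $k$-gon, every turn is a rightward turn (no leftward or neutral turns occur, since the interior lies to the right), so $r(C)=k(F_1)$ and $l(C)=0$, giving $\rho(C)=k(F_1)$. When $d=3$ this equals $6+[k(F_1)-6]$, and when $d\ge 4$ it is at least $4+[k(F_1)-4]$ (in fact equal), so the base case holds in both cases.

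For the inductive step, assume the result for all cycles enclosing fewer than $s$ faces, and let $C$ enclose $\sF=\{F_1,\dots,F_s\}$ with $s\ge 2$. By Lemma \ref{lem:st}, there is a face $F=F_j\in\sF$ such that the boundary $C'$ of $\sF\setminus F$ is a cycle, and the edges of $\partial F\setminus C$ form a path $P$. I would then compare the clockwise traversals of $C$ and $C'$: they agree except along the portions where $F$ meets the boundary. Write $j$ for the length of the path $C\cap\partial F$ (the edges of $\partial F$ lying on $C$) and $j'=k(F)-j$ for the length of $P=\partial F\setminus C$; note $j\ge 1$ and $j'\ge 1$. The key bookkeeping step is to track how the right/left turn counts change when $C$ is replaced by $C'$: the $j$ edges of $C$ on $\partial F$ (and the turns at their endpoints interior to this arc) are removed, the $j'$ edges of $P$ are inserted, and the two turns at the endpoints where $C\cap\partial F$ meets the rest of $C$ are modified. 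A direct local analysis at each vertex — using that all faces are regular polygons so the turn type (right/left/neutral) at a vertex is determined by how many faces of the enclosed region are incident there on each side — shows that
\begin{equation}\label{eq:peel}
\rho(C) = \rho(C') + \bigl(k(F)-6\bigr) \text{ if } d=3, \qquad \rho(C)\ge \rho(C') + \bigl(k(F)-4\bigr) \text{ if } d\ge 4.
\end{equation}
Indeed, for $d=3$ each interior vertex of the removed arc was a right turn for $C$, contributing $j-1$ to $r(C)$; inserting $P$ adds $j'-1=k(F)-j-1$ right turns plus the turn at each end, and the two end-vertices switch from right turns (for $C$) to left turns (for $C'$), or vice versa, contributing the correction. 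Summing the arithmetic gives exactly $k(F)-6$; the $d\ge 4$ case is the same computation except that some turns that would be "right" in the cubic case may instead be neutral, which can only decrease $\rho(C')$ relative to $\rho(C)-(k(F)-4)$, yielding the stated inequality. Combining \eqref{eq:peel} with the inductive hypothesis applied to $C'$ (which encloses $s-1$ faces, namely $\sF\setminus F$) gives
\begin{equation*}
\rho(C) = 6 + \sum_{i\ne j}\bigl[k(F_i)-6\bigr] + \bigl[k(F)-6\bigr] = 6 + \sum_{i=1}^s\bigl[k(F_i)-6\bigr]
\end{equation*}
when $d=3$, and the analogous inequality when $d\ge 4$, completing the induction.

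The main obstacle I anticipate is the careful local analysis behind \eqref{eq:peel}: one must verify that at the two "junction" vertices where the peeled-off path $P$ attaches to the remaining cycle $C'$, the change in turn-type is correctly accounted for, and that no subtlety arises when the enclosed region is non-convex or when several faces of $\sF$ meet at a single vertex. Using that every face is a \emph{regular} polygon (Proposition \ref{th91}) pins down the turn angles, so "right", "left", and "neutral" turns are unambiguous and additive, which is what makes the inductive peeling clean; the $d=3$ case is an equality precisely because every non-backtracking turn is either right or left, while for $d\ge 4$ the existence of $d-3$ neutral directions is exactly what forces the inequality rather than equality. A cross-check on the base cases and on a simple example such as two adjacent hexagons (where $\rho(C)=6+0+0=6$ for $d=3$) would confirm the signs and constants.
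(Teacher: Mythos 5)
Your argument is essentially the paper's own proof: the same induction on the number $s$ of enclosed faces, the same base case, and the same peeling step via Lemma \ref{lem:st}, with your displayed peeling relation being exactly the paper's \eqref{eq:rl2} once one notes $\rho(C_F)=k(F)$, where $C_F$ is the boundary cycle of the peeled face. The only blemish is in your local bookkeeping sketch, where the right/left labels are swapped (at the two junction vertices $C$ turns left while $C'$ and $C_F$ turn right, and at the interior vertices of the peeled path $C'$ turns left while $C_F$ turns right); with these corrected, the count $(j-1)+(j'-1)-4=k(F)-6$ gives precisely your claimed relation for $d=3$, and the corresponding inequality for $d\ge 4$, as in the paper.
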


\begin{proof}
The proof is by induction on the number $s=s(C)$ of faces
enclosed by $C$.
It is trivial when $s=1$ that  $r(C)=k(F_1)$ and $l(C) = 0$, and 
\eqref{eq:rl} follows in that case.

Let $S \ge 2$ and assume that \eqref{eq:rl} holds for all $C$ with $s(C) < S$. 
Let $C=(c_0,c_1,\dots,c_n)$ be such that 
$s(C)=S$, and pick $F\in\sF$ as in Lemma \ref{lem:st}. Let $\pi$
be the path of edges in $\pd F \setminus C$, as illustrated in Figure \ref{fig:rightleft2}.

\begin{figure}[htbp]
\centerline{\includegraphics*[width=0.4\hsize]{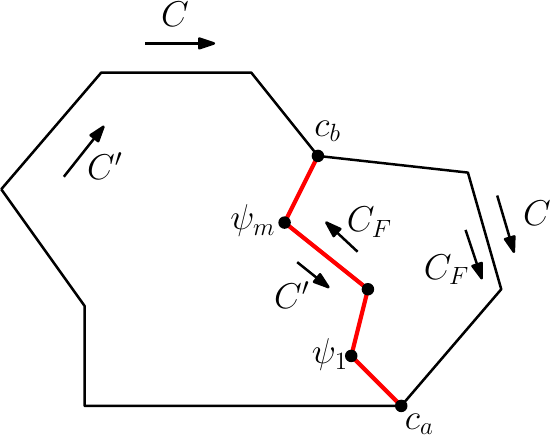}}
   \caption{An illustration of the cycles $C$, $C'$, $C_F$, with the path $\pi$ from $c_a$ to $c_b$.}
   \label{fig:rightleft2}
\end{figure}

Let $C_F$ (\resp, $C'$)  be the boundary cycle of $F$
(\resp, $\sF \setminus F$), each viewed clockwise.
We write $\pi$ in the form 
$\pi=(c_a, \psi_1,\psi_2,\dots, \psi_m, c_b)$ where $a \ne b$, $\psi_i\notin C$. 
We claim that
\begin{equation}\label{eq:rl2}
\rho(C) \begin{cases}
=  \rho(C')  + \rho(C_F)-6 &\text{if } d=3,\\
\ge\rho(C')  + \rho(C_F)-4 &\text{if } d \ge 4.
\end{cases}
\end{equation}
The induction step follows from \eqref{eq:rl2} by applying the induction hypothesis
to $C'$ and noting that $\rho(C_F)=k(F)$.

We prove \eqref{eq:rl2} by considering the contributions
made to its left and right sides by vertices
in the cycles $C , C', C_F$. Any  vertex 
$y \in C\setminus\{c_a,c_b\}$ contributes equal amounts to the left and right sides.
We turn, therefore, to vertices in the remaining path $\pi$.
\begin{numlist}
\item
The cycle $C_F$ (\resp, $C'$) takes a right  (\resp, left)
turn at each vertex $\psi_i$. The net contribution 
from $\psi_i$ to the right (\resp, left) side of \eqref{eq:rl2} is 
$1-1=0$ (\resp, $0$).
\item
Consider the turns made by $C,C',C_F$  at a vertex $x\in\{c_a,c_b\}$.
\begin{letlist}
\item Suppose $d=3$. At $x$, $C_F$ takes a  right turn, $C'$ takes a right turn, and $C$ takes a left turn. The net contribution 
from $x$ to the right (\resp, left) side of \eqref{eq:rl2} is 
$1+1=2$ (\resp, $-1$).
\item Suppose $d \ge 4$. 
At $x$,  $C_F$ takes a  right turn.
Furthermore, if $C'$ takes a right turn,
then $C$ does not take a left turn. 
The net contribution from $x$ to $[ \rho(C')+ \rho(C_F)]-\rho(C)$
is at most $2$.
\end{letlist}
\end{numlist}
We sum the above contributions, noting that case 2 applies for exactly
two values of $x$, to obtain \eqref{eq:rl2}. The proof is complete.
\end{proof}

\begin{lemma}\label{l96}
Let $G \in \sT_d$ be infinite with type-vector $\arc{k_1,k_2,\dots,k_d}$,
and let $C$ be a cycle of $G$.
\begin{letlist}
\item
If $d=3$ and  $\min\{k_i\}\geq 6$, then
$\rho(C)\geq 6$.
\item If $d=3$ and $\arc{k_1,k_2,k_3}=\arc{5,2n,2n}$ with $n\geq 5$,
then $\rho(C) \ge 5$.
\item If $d \ge 4$ and $\min\{k_i\} \ge 4$, then $\rho(C)\ge 4$.
\end{letlist}
\end{lemma}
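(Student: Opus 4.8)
The plan is to read off (a) and (c) directly from Lemma \ref{l95}, and to spend the work on (b). For (a): if $d=3$ and $\min_i k_i\ge 6$, then every face $F_i$ enclosed by $C$ has $k(F_i)\ge 6$, so each summand $k(F_i)-6$ in the identity $\rho(C)=6+\sum_{i=1}^s[k(F_i)-6]$ of \eqref{eq:rl} is non-negative and $\rho(C)\ge 6$. For (c): if $d\ge 4$ and $\min_i k_i\ge 4$, then $k(F_i)-4\ge 0$ for every enclosed face, so the inequality $\rho(C)\ge 4+\sum_{i=1}^s[k(F_i)-4]$ of \eqref{eq:rl} gives $\rho(C)\ge 4$. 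Both are one-line consequences of Lemma \ref{l95}.

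For (b) the type-vector is $\arc{5,2n,2n}$ with $n\ge 5$, so every vertex of $G$ lies on exactly one pentagon, whence distinct pentagons of $G$ are pairwise vertex-disjoint; this is the only structural fact I would use. Let $C$ enclose the faces $\sF=\{F_1,\dots,F_s\}$, of which $p$ are pentagons and $q=s-p$ are $2n$-gons. Since $k(F_i)\in\{5,2n\}$, Lemma \ref{l95} gives $\rho(C)=6-p+(2n-6)q$, and because $2n-6\ge 4$ it suffices to prove $p\le 1+(2n-6)q$, equivalently $\rho(C)\ge 5$. I would argue by induction on $s$. The case $s=1$ is clear, since then $\rho(C)=k(F_1)\ge 5$. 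For $s\ge 2$: if $q=0$ then $\sF$ is a set of at least two pairwise vertex-disjoint pentagons tiling a connected topological disk, which forces two of them to share an edge, a contradiction; hence $q\ge 1$. I would then peel (in the sense of Lemma \ref{lem:st}) a face $F$ that is a $2n$-gon, let $C'$ be the boundary cycle of $\sF\setminus F$, and apply \eqref{eq:rl2} in the case $d=3$ together with $\rho(C_F)=k(F)=2n$ to get $\rho(C)=\rho(C')+(2n-6)$; the induction hypothesis for $C'$ (which encloses $s-1$ faces) then yields $\rho(C)\ge 5+(2n-6)\ge 9$, closing the induction.

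The main obstacle is exactly that last peeling step: one must be able to peel a \emph{$2n$-gon} rather than a pentagon when $s\ge 2$, since peeling a pentagon only gives $\rho(C)=\rho(C')-1\ge 4$, which is too weak. First one should observe that some $2n$-gon touches $C$: if every face meeting $C$ were a pentagon then, since consecutive edges of $C$ border enclosed faces sharing a common vertex of $C$ and two distinct pentagons may not share a vertex, all of $C$ would border one and the same pentagon, forcing $s=1$. Given a $2n$-gon on the boundary, one must upgrade the spanning-tree argument of Lemma \ref{lem:st} so that the peelable face it produces can be chosen to be a $2n$-gon; here vertex-disjointness of the pentagons is used again, as the pentagon-vertices of the dual graph of the faces meeting $C$ are pairwise non-adjacent and each pentagon is entirely surrounded by $2n$-gons, which prevents the $2n$-gon-vertices of that dual from all being cut-vertices. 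An alternative to the topology is a direct count: vertex-disjointness forces every $2n$-gon to be adjacent to at most $n$ enclosed pentagons, and every enclosed pentagon (when $s\ge 2$) to be adjacent to at least one enclosed $2n$-gon, so $p\le nq\le 1+(2n-6)q$ whenever $n\ge 6$, and also when $n=5$ with $q\le 1$; the residual case $\arc{5,10,10}$ with $q\ge 2$ is the genuine hard point, where one needs either the refined peeling above or a sharper analysis of how pentagons can cluster around a single decagon.
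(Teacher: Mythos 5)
Parts (a) and (c) are fine and coincide with the paper's proof: they are immediate from \eqref{eq:rl}. The problem is part (b), and you have in fact located the gap yourself: the case $\arc{5,10,10}$ (i.e.\ $n=5$) with two or more enclosed decagons is left open, and neither of your proposed repairs closes it. The ``refined peeling'' cannot work as stated, because there need not exist any enclosed $2n$-gon $F$ whose removal leaves a region bounded by a \emph{single} cycle: consider a decagon $D$ with several pairwise vertex-disjoint pentagons glued to non-adjacent edges of $D$, all inside $C$ (or two adjacent decagons each carrying their own pentagons). The dual spanning tree of Lemma \ref{lem:st} then has only pentagon leaves, and removing any decagon disconnects the enclosed region, so its boundary is not a cycle and your inductive step (which needs $\rho(C)=\rho(C')+2n-6$ with $C'$ a single cycle enclosing $s-1$ faces) cannot be performed; peeling a pentagon only yields $\rho(C)\ge 4$, as you note. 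Your counting fallback $p\le nq$ is likewise too weak exactly when $n=5$ and $q\ge 2$.

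The paper's proof of (b) gets around this by changing both the induction variable and the peeling step. One inducts on the number $M(C)$ of enclosed $2n$-gons. Since every vertex of $C$ lies on at most one enclosed pentagon, if $M(C)\ge 1$ some $2n$-gon $F$ shares an edge with $C$ (your observation). One then removes $F$ from the inside of $C$ \emph{without} requiring the remainder to be bounded by one cycle: the symmetric difference of $C$ and $\partial F$ decomposes into edge-disjoint cycles $C_1,\dots,C_m$, each with $M(C_i)<M(C)$, and \eqref{eq:rl} gives
$$
\rho(C)=6+[2n-6]+\sum_{i=1}^m\bigl[\rho(C_i)-6\bigr]\ \ge\ 2n-m
$$
by the induction hypothesis $\rho(C_i)\ge 5$. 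The key extra ingredient, which replaces your unavailable refined peeling, is the bound $m\le n$: each $C_i$ shares an edge with $\partial F$, and no two of these shared edges have a common vertex, so a $2n$-cycle can meet at most $n$ of them. Hence $\rho(C)\ge 2n-n=n\ge 5$, which is exactly what closes the $n=5$ case. In short, the deficit of $-1$ per pentagon is absorbed per sub-cycle rather than per face, and the number of sub-cycles produced by deleting one $2n$-gon is at most $n$; without this device (or some substitute for the $\arc{5,10,10}$, $q\ge 2$ configuration) your argument does not prove the lemma.
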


\begin{proof}
(a,\,c) These are immediate consequences of \eqref{eq:rl}.

\par\noindent
(b) Suppose $\arc{k_1,k_2,k_3}=\arc{5,2n,2n}$ with $n\geq 5$, 
and let $M=M(C)$ be the number of size-$2n$ faces enclosed
by a cycle $C$. 
We shall prove $\rho(C) \ge 5$ 
by induction on $M(C)$.
If $M=0$, then $C$ encloses exactly one size-$5$ face, and 
$\rho(C)=5$.
Let $S \ge 1$, and assume $\rho(C) \ge 5$ for any cycle $C$ with $M(C) <S$.

Let $C$ be a cycle with $M(C)=S$. Since
every vertex of $C$ is incident  to no more than one size-$5$ face
inside $C$, $C$ contains some size-$2n$ face $F$ with at least
one edge in common with $C$. 
Let $C'$ be the boundary of the set obtained by
removing $F$ from the inside of $C$; that is, $C'$ may be viewed as the sum of
the cycles $C$ and $\partial F$ with addition modulo $2$. 
Then $C'$ may be expressed as the edge-disjoint union of
cycles $C_1,C_2,\dots,C_m$ satisfying $M(C_i)<S$ for $i=1,2,\dots,m$. 

By \eqref{eq:rl} and the induction hypothesis,
\begin{align*}
\rho(C) &=6+[2n-6]+\sum_{i=1}^m [\rho(C_i)-6]\\
&\ge 2n-m.
\end{align*}
Each $C_i$ shares an edge with $\partial F$, and no two such edges
have a common vertex. Therefore, $m \le n$, and the induction step is complete since $n \ge 5$.
\end{proof}

\subsection{Proof that $\mu\ge\phi$ when $\min\{k_i\}\ge 5$ and $\arc{k_1,k_2,k_3}
\neq \arc{5,8,8}$} 
\label{caseA}

This case covers the largest number of instances. Certain other special families of type-vectors
will be considered in Sections \ref{caseA2}--\ref{caseD}.
By Proposition \ref{th92}, it suffices to assume
\begin{equation}\label{assume1}
\text{either $\min\{k_i\}\geq 6$, 
or $\arc{k_1,k_2,k_3}=\arc{5,2n,2n}$ with $n\geq 5$}.
\end{equation}

We shall construct an injection from the set $\WW_n$ to the set $\Si_n(v)$ of SAWs on $G$ starting at $v\in V$.
For $w \in \WW_n$, we shall define an $n$-step
SAW $\pi(w)$ on $G$, and the  map $\pi:\WW_n\to \Si_n(v)$ will be an injection.
The idea is as follows.  With $G$ embedded in the plane, one may think 
of the steps of a SAW on $G$ (after its first edge) as 
taking a sequence of right and left turns.
For given $w \in \WW_n$, we will explain how the letters H and V in $w$ are mapped
to the directions right/left.  

Let $n \ge 1$ and $w =(w_1w_2\cdots w_n)\in \WW_n$, so that in particular $w_1=$ H. 
We shall construct the SAW $\pi=\pi(w)$ via an intermediate SAW $\pi'$
which is constructed iteratively as follows.
In order to fix an initial direction, we choose a $2$-step SAW $(v',v,v'')$ 
of $G$ starting at some neighbour $v'$ of $v$, and 
we assume in the following that
the turn in the path $(v',v,v'')$ is rightwards (the other case is 
similar). 
We set $\pi'(w)=(v',v,v'')$ if $n=1$. and we call this rightwards turn the \emph{first} turn of $\pi'$.
The first letter of $w$ is $w_1=$ H, and the second is either H or V,
and the latter determines whether the second turn of $\pi'$ is the same 
as or opposite to the previous turn.
We adopt the rule that:
\begin{equation}
\begin{aligned}
\text{if $(w_1w_2) =$ (HV),\q}&\text{the second turn is the same (rightwards) as the previous},\\
\text{if $(w_1w_1) =$ (HH),\q}&\text{the second turn is opposite (leftwards) to the previous}.\\
\end{aligned}\label{eq:rule1}
\end{equation}

For  $k\ge 3$, the $k$th turn of $\pi'$ is either to the right or the left, and is either the same or 
opposite to the $(k-1)$th turn. Whether it is the same or opposite is determined as follows:
\begin{equation}
\begin{aligned}
\text{when $(w_{k-2}w_{k-1}w_k) =$ (HHH),\qq}&\text{it is opposite},\\
\text{when $(w_{k-2}w_{k-1}w_k) =$ (HHV),\qq}&\text{it is the same},\\
\text{when $(w_{k-2}w_{k-1}w_k) =$ (HVH),\qq}&\text{it is opposite},\\
\text{when $(w_{k-2}w_{k-1}w_k) =$ (VHH),\qq}&\text{it is the same},\\
\text{when $(w_{k-2}w_{k-1}w_k) =$ (VHV),\qq}&\text{it is opposite}.
\end{aligned}\label{eq:rule2}
\end{equation}

When  the iterative construction is complete, a path 
$\pi'=(v'=\pi'_{-1},v=\pi'_0,v''=\pi'_1,\dots,\pi'_n)$ on $G$ ensues.
Since $\pi'$ proceeds by right or left turns,
it is  non-backtracking. The following claim will be useful in showing
it is also self-avoiding.

\begin{lemma}\label{cm}
Let $i\in\{0,1,\dots,n\}$.
For any subpath of $\pi'$ beginning at $\pi'_i$, 
the numbers of right turns and left turns differ by at most $3$. 
\end{lemma}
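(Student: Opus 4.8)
The plan is to bound $\rho(C)$ from below for any cycle $C$ formed by a subpath of $\pi'$ together with a short "return" path, and then combine this with Lemma \ref{l96}, which guarantees $\rho(C)\geq 5$ (or $\geq 6$) for every genuine cycle of $G$. Concretely, suppose for contradiction that some subpath $\psi=(\pi'_i,\pi'_{i+1},\dots,\pi'_j)$ of $\pi'$ has $|r(\psi)-l(\psi)|\geq 4$, where $r,l$ count right and left turns \emph{within} $\psi$. Without loss of generality take $r(\psi)-l(\psi)\geq 4$ (the other sign is handled by reflecting, i.e.\ reading the path backwards or swapping the roles of right and left). First I would show that $\psi$ is itself a SAW: this follows by the induction already in place (or, more carefully, one argues that the first self-intersection along $\pi'$ would close a cycle $C$ to which Lemma \ref{l96} applies, forcing $\rho(C)\geq 5$, while the turning count accumulated so far is too small to sustain that — so intersections cannot occur before the turning deficit is large, which is exactly what Lemma \ref{cm} is being used to control). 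In the clean version, Lemma \ref{cm} is proved \emph{simultaneously} with self-avoidance of $\pi'$, by induction on the length of the subpath.

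The heart of the argument is the following. Since $\psi$ is self-avoiding and non-backtracking, its two endpoints $\pi'_i$ and $\pi'_j$ are distinct vertices of $G$; because $G$ is transitive (hence has no terminal vertices) and, in the embedding, faces are bounded regular polygons, there is a shortest path $\tau$ in $G$ from $\pi'_j$ back to $\pi'_i$. The concatenation $C=\psi\cup\tau$, after removing any backtracking, decomposes into one or more cycles of $G$. I would orient $C$ so that $\psi$ is traversed in its forward direction and apply the additivity of $\rho$ over an edge-disjoint cycle decomposition (as used in Lemma \ref{l95}): $\rho(C)=\sum_k \rho(C_k)$. The key accounting point is that the turns of $C$ at the interior vertices of $\psi$ are exactly the turns of $\pi'$ there, contributing $r(\psi)-l(\psi)\geq 4$ to $\rho(C)$; the remaining contributions come from the $O(1)$ vertices of $\tau$ and the two junction vertices $\pi'_i,\pi'_j$, each contributing at most a bounded amount (at most $1$ in absolute value per turn, since $d=3$ means each turn is $\pm 1$). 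Choosing $\psi$ to be a \emph{minimal} offending subpath — so that every proper subpath has turning difference at most $3$ — forces the return path $\tau$ to be short: in fact one can take $\tau$ of length at most a small constant, so $C$ encloses only a bounded, controlled set of faces.

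The contradiction is then extracted by comparing two computations of $\rho(C)$. On one hand, by Lemma \ref{l95}, $\rho(C)=6+\sum_{i}[k(F_i)-6]\geq 6$ whenever $\min\{k_i\}\geq 6$; and by Lemma \ref{l96}(b), $\rho(C)\geq 5$ in the case $\arc{5,2n,2n}$ with $n\geq 5$ — in particular $\rho(C)$ is \emph{positive and of a definite size}. On the other hand, reading $\psi$ \emph{backwards} gives a subpath $\psi^{\mathrm{rev}}$ with $r(\psi^{\mathrm{rev}})-l(\psi^{\mathrm{rev}})=-(r(\psi)-l(\psi))\leq -4$, and attaching the reverse return path produces a cycle $C'$ (with the opposite orientation) satisfying $\rho(C')\leq -4+O(1)$, i.e.\ $\rho(C')$ is negative. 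But reversing the orientation of a cycle negates $\rho$, so $\rho(C')=-\rho(C)$, and we would need $-\rho(C)$ to be both $\geq 5$ (from the forward cycle $C'$ viewed as a cycle of $G$, applying Lemma \ref{l96} again) and equal to $-\rho(C)\leq -4+O(1)$; tracking the constants (using that the return path is short and $d=3$) the two bounds are incompatible. Hence no offending subpath exists and the difference is at most $3$.

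The main obstacle I expect is bookkeeping at the junction vertices and along the return path $\tau$: one must verify that the "$O(1)$" correction terms are genuinely small enough — concretely that a minimal offending subpath forces $\tau$ to have length $\leq 1$ or $2$ — so that the turning excess of $\geq 4$ along $\psi$ cannot be cancelled. This is where the hypothesis \eqref{assume1} (excluding $\arc{5,8,8}$ and requiring $\min\{k_i\}\geq 6$ or $\arc{5,2n,2n}$ with $n\geq 5$) does real work: it is precisely what makes Lemma \ref{l96} give $\rho\geq 5$, and the slack between $5$ and the threshold $3$ in Lemma \ref{cm} is exactly the room needed to absorb the boundary corrections. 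The combinatorics of which faces a short return path can enclose is finite and case-checkable, but it is the one genuinely fiddly point; the rest is the additivity of $\rho$ and sign-reversal under reorientation, both already established above.
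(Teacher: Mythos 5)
There is a genuine gap: your argument never uses the only hypothesis that makes the lemma true, namely that $\pi'$ is the specific path generated from a word $w\in\WW_n$ (first letter H, no two consecutive V's) by the turn rules \eqref{eq:rule1}--\eqref{eq:rule2}. The turn sequence of $\pi'$ is determined by $w$ alone, and the bound \lq\lq difference at most $3$'' is a combinatorial property of that encoding, not a geometric property of $G$: in a graph all of whose faces have size at least $6$, a path of five consecutive right turns along the boundary of one face is a perfectly good non-backtracking SAW whose right/left counts differ by $5$. Hence no contradiction of the kind you seek can be extracted from Lemmas \ref{l95} and \ref{l96}; a subpath with turn excess $4$ sitting inside a cycle with $\rho\ge 5$ is not inconsistent --- it is exactly what happens when a walk nearly closes a large face. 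The paper's logic runs the other way: Lemma \ref{cm} is proved purely from the word structure, and only afterwards is Lemma \ref{l96} invoked to conclude that a subpath of $\pi'$ cannot close a cycle (a cycle would force $|\rho|\ge 5$ or $6$, while the encoding permits at most $4$ even counting the extra closing turn).

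Two specific steps also fail on their own terms. The claim that a minimal offending subpath forces a short return path $\tau$ is unjustified: a long subpath whose turns alternate (difference staying at most $1$) and which then takes a few extra right turns at its end is a minimal offending subpath, yet its endpoints can be arbitrarily far apart, so the \lq\lq$O(1)$'' boundary corrections cannot be controlled. And the concluding sign-reversal computation only restates that reversing the orientation of a cycle negates $\rho$; it produces no incompatibility. The actual proof is elementary word combinatorics: decompose the relevant subword of $w$ into blocks VH$^k$V, check from \eqref{eq:rule2} that an odd block contributes zero to the right-minus-left count and leaves the subsequent turn pattern unchanged, that even blocks contribute $\pm 3$ and cancel in pairs, and then verify the bound directly for the reduced words H$^a$VH$^b$ and H$^a$VH$^{2r}$VH$^b$.
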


\begin{proof}
A subpath of $\pi'$ corresponds to some subword $w'$ of $w$, and we may
assume the length of $w'$ is at least three. Let $k\ge 1$.
A $k$-\emph{block} of $w'$ is defined to be a subword $B$ of $w'$ of the form VH$^k$V, where H$^k$
denotes $k$ consecutive appearances of H. A \emph{block} is a $k$-block for some $k \ge 1$. 
A $k$-block $B$ is called \emph{even} (\resp, \emph{odd}) according to the parity of $k$. 

A $k$-block $B$ generates $k+1$ turns in $\pi'$ corresponding to the letters H$^k$V.
These $k+1$ turns are determined by the $k+1$ triples
HVH, VHH, HHH, $\dots$, HHV (with $k-2$ appearances of HHH).  By inspection of \eqref{eq:rule2},
the corresponding turns are related to their predecessors by the sequence
osoo$\,\cdots\,$os, where o (\resp, s) means `opposite' (\resp, `same'),
that is, with $k-1$ opposites and $2$ sames. Suppose, for illustration, that the turn
immediately prior to the block was R, where R (\resp, L) denotes right (\resp, left).
Then the corresponding sequence of turns begins LLRL$\,\cdots$ 
\begin{letlist}
\item If $B$ is odd, then, in the corresponding $k+1$ turns made by $\pi'$, the
numbers of right and left turns are equal. Moreover, if the first turn is to the right 
(\resp, left), then the last turn is to the  left (\resp, right).
\item If $B$ is even, the numbers of right and left turns differ by $3$. 
Moreover, the first turn is to the left if and only if the last turn is to the left, 
and in that case there are $3$ more left turns than right turns.
\end{letlist}

Let $B$ be an odd block.
By (a), $B$ makes no contribution to the aggregate difference between the
number of right and left turns. Furthermore, the first turn of $B$ equals the first turn following $B$
(since the last turn of $B$ is opposite to the first, and the 
following subword HVH results in a turn equal to the first).
We may therefore consider $w'$ with all odd blocks removed
(which is to say, an odd $k$-block is replaced by a single V), and we assume
henceforth that $w'$ has no odd blocks.

Using a similar argument for even blocks based on (b) above, 
the effects of two even blocks 
cancel each other, and we may therefore remove any even 
number of even blocks from $w'$ (with a single V remaining) without altering the aggregate difference. 
After performing these reductions, we obtain from $w'$
a reduced word $w''$ with form
H$^a$VH$^b$, H$^a$VH$^{2r}$VH$^b$, or H$^r$, 
where $a \ge 0$, $r \ge 1$, 
$b \ge 0$. 
We consider each of these cases separately. 

Let $\De$ denote the difference (in absolute value)
between the numbers of left and right turns, and write 
$(\,)_m$ for a sequence of length $m$.
Each turn of $w''$ is related to its previous turn according to
\eqref{eq:rule2}. Therefore, once the first turn of $w''$ is determined, 
the rest follow by sequential application of \eqref{eq:rule2}. 
The first turn depends on the character (V or H) prior to $w''$,
but its value is immaterial to the value of $\De$. We may therefore
choose the previous character arbitrarily.

A. \emph{The case }$w''= $ H$^a$VH$^b$. Suppose that $w''$ is preceded by H.
\begin{romlist}
\item Let $a \ge 2$. The corresponding sequence is (ooo$\cdots$)$_{a-1}$s(osoo$\cdots$)$_b$, and $\De \le 2$.

\item Let $a=1$. The sequence is s(osoo$\cdots$)$_b$, so that $\De\le 2$.

\item Let $a=0$. The sequence  is   (osoo$\cdots$)$_b$, so that $\De \le 2$.
 
\end{romlist}

B. \emph{The case }$w''= $ H$^a$VH$^{2r}$VH$^b$.  
Suppose that $w''$ is preceded by H.
\begin{romlist}
\item Let $a \ge 2$. The sequence is (ooo$\cdots$)$_{a-1}$s(osoo$\cdots$)$_{2r}$s(osoo$\cdots$)$_b$
so that $\De \le 2$.

\item Let $a=1$. The sequence is s(osoo$\cdots$)$_{2r}$s(osoo$\cdots$)$_b$, so that $\De\le 2$.

\item Let $a=0$. The sequence  is   (osoo$\cdots$)$_{2r}$s(osoo$\cdots$)$_b$, so that $\De \le 3$.
 
\end{romlist}

C. \emph{The case }$w''= $ H$^r$.  We may suppose that $r\ge 4$,
since otherwise $\De\le 3$ trivially. 
Suppose that $w''$ is preceded by H.
The sequence is (ooo$\cdots$)$_{r-1}$,
so that $\De \le 1$.

In every case $\De \le 3$, and the proof is complete.
\end{proof}

Write $\pi'(w)=(v', v=x_0,v''=x_1,\dots,x_n)$,
and remove the first step to obtain a 
SAW $\pi(w)=(v=x_0,x_1,\dots,x_n)$.
By Lemmas \ref{l96}(a,\,b) and \ref{cm}, 
subject to \eqref{assume1}, 
$\pi(w)$ contains no cycle and is thus a SAW.
This is seen as follows.
Suppose $\nu=(x_i, x_{i+1}, \dots, x_j=x_i)$
is a cycle. The cycle has one more turn than the path, and hence,
by Lemma \ref{cm}, $|\rho(\nu)| \le 4$, in contradiction of
Lemma \ref{l96}(a,\,b).
In conclusion,  $\pi$ maps $\WW_n$ to $\Si_n(v)$. 

The map $\pi: \WW_n \to \Si_n(v)$ is an injection since, by examination
of \eqref{eq:rule1}--\eqref{eq:rule2}, $\pi(w)\ne \pi(w')$ if $w \ne w'$.
We deduce  by \eqref{eq:3} that $\mu(G)\ge\phi$.


\subsection{Proof that $\mu\ge\phi$ when $\min\{k_i\} = 3$}\label{caseA2}
Assume $\min\{k_i\}=3$.
By Proposition \ref{th92} and the assumption $f(G)>2$, 
the type-vector is $\arc{3,2n,2n}$ for some $n\geq 7$. 
On contracting each triangle to a single vertex, we obtain
the graph $G'=\arc{n,n,n}$; therefore,  $G$ is a Fisher graph of  
$G'$. 
By Proposition \ref{lem:f4}(a),
\begin{equation*}
\frac{1}{\mu(G)^2}+\frac{1}{\mu(G)^3}=\frac{1}{\mu(G')}.
\end{equation*}
It is proved in Section \ref{caseA} that $\mu(G')\ge \phi$, and the inequality $\mu(G)\ge\phi$
follows (see also \cite{GrL2}).
 
\subsection{Proof that $\mu\ge\phi$ for $\arc{4,2n,2p}$ with $p\geq n\geq 4$
 and $n^{-1}+p^{-1}<\frac12$} 
\label{caseB}
 
Let $G=(V,E)\in\TLF_3$ be infinite with type-vector $\arc{4,2n,2p}$ where $p\geq n\geq 4$ and 
$n^{-1}+p^{-1}<\frac{1}{2}$.
Note that $G$ has girth $4$, and every vertex is incident to exactly one size-$4$ face. 

From $G$, we obtain a new graph $G'$  by contracting 
each size-$4$ face to a vertex. 
For $u,v\in V$ lying in different size-$4$ faces $F_u$, $F_v$ of $G$, 
there exists $\a\in\Aut(G)$ that  maps $v$ to $v$, and hence maps $F_u$ to $F_v$.
Therefore, $\a$ induces an automorphism of $G'$, so that $G'$ is transitive.
We deduce  that $G'\in\TLF_4$, and in addition $G'$  is infinite with girth $n\ge 4$ and 
type-vector $\arc{n,p,n,p}$.
We shall make use of $G'$ later in this proof.

Let $v \in V$. We will construct an injection from $\WW_n$
to $\Si_n(v)$ in a manner similar to the argument 
following \eqref{assume1}.  
An edge of $G$ is called \emph{square} if it lies in a size-$4$ face,
and \emph{non-square} otherwise.  
Let $w =(w_1w_2\cdots w_n) \in \WW_n$.   We shall construct a 
\emph{non-backtracking} $n$-step 
walk $\pi=\pi(w)$ on $G$ from $v$, and then show it is a SAW.
For $k=1$, set $\pi(w)=(v,v')$ where $\langle v,v'\rangle$ is the unique non-square
edge of $G$ incident to $v$.
We perform the following construction for $k = 2,3,\dots, n$, in which the edges
of $\pi$ are denoted $e_1,e_2,\dots,e_n$ in order.

\begin{numlist}
\item Suppose $(w_{k-1}w_k)=$ (HV). 
The edge $e_k$ is chosen to be square according to the following rules.
   
\begin{letlist} 
\item If the edge $e_{k-1}$ of $\pi$ corresponding to $w_{k-1}$ is
square, then the next edge $e_k$ of $\pi$ is square. That is,  $e_{k-1}$
and $e_k$ form a length-$2$ path on the same size-$4$ face of $G$.
  
\item Suppose $e_{k-1}$ is non-square. Then the next edge $e_k$
is one of the two possible square edges, chosen as follows.
In contracting $G$ to $G'$,
the walk $(\pi_0,\pi_1,\dots,\pi_{k-1})$ contracts to a walk 
$\pi'=\pi'(w)$ on $G'$. 
Find the most recent turn at which $\pi'$ turns either right or left. 
If, at that turn, $\pi'$ turns left (\resp, right), 
the walk $\pi$ on $G$ turns left (\resp, right).
If no turn of $\pi'$ is rightwards or leftwards, then (for definiteness) $\pi$ turns left.
\end{letlist}
  
\item Suppose $(w_{k-1}w_k)=$ (HH).
  
\begin{letlist}
\item If the edge $e_{k-1}$ of $\pi$ corresponding to $w_{k-1}$ is
square, then the next edge $e_k$ of $\pi$ is the unique possible  non-square edge. 
  
\item Suppose $e_{k-1}$ is non-square.  
Then $e_k$
is one of the two possible square edges, chosen as follows.
In the notation of 1(b) above,
 find the most recent turn at which $\pi'$ turns either right
 or left. If at that turn, $\pi'$ turns left (\resp, right), 
 the walk $\pi$ on $G$ turns right (\resp, left).
 If $\pi'$ has no such turn, then $\pi$ turns right.
\end{letlist}
  
\item Suppose $(w_{k-1}w_k)=$ (VH), so that, in particular, $k \ge 3$. 
The edge $e_{k-1}$ of 
$G$ corresponding to $w_{k-1}$ must be square. 
If $e_{k-2}$ is square 
(\resp, non-square), then $e_k$ is the unique possible non-square (\resp, square) edge. \end{numlist}

  \begin{figure}
\centerline{\includegraphics[width=0.6\textwidth]{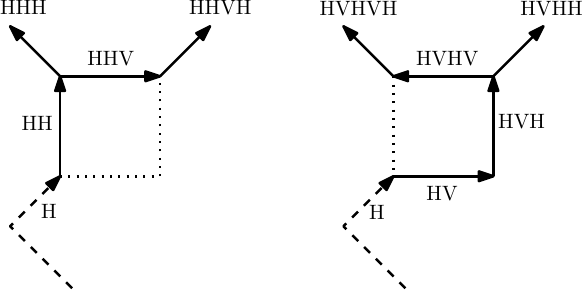}}
\caption{The dashed line is the projected SAW on $G'$. After
a right (\resp, left) turn, the projection either moves straight or turns left (\resp, right).}  
\label{hv1}
\end{figure}

We claim that the  mapping 
$\pi:\WW_n \to \Si_n(v)$ is an injection. 
By construction, $\pi(w)=\pi(w')$ if and only if $w=w'$,
and, furthermore, $\pi(w)$ is non-backtracking. 
It remains to show that each $\pi(w)$ is a SAW. 
In showing this as follows, we shall make use of the projected walk $\pi'(w)$ on the graph $G'$.

We begin the proof that $\pi=\pi(w)$ is a SAW
with some observations concerning the above construction, illustrated in part by Figure \ref{hv1}. 
\begin{romlist}
\item Every non-square edge of $\pi$ corresponds to the letter H. Thus, $\pi'=\pi'(w)$ takes a step
only (but not invariably) when H appears.
\item Each non-square edge of $\pi$ is followed by a square edge of some size-$4$ face $F$. 
Having touched a size-$4$ face 
$F$, the walk $\pi$ proceeds around $F$ before departing along 
the unique non-square edge incident with the point of departure.
\item The walk $\pi$ never traverses consecutively  more then three 
edges of any $F$. In addition, $\pi'$ is
non-backtracking.
\item The projected walk $\pi'$ takes steps on $G'$. The steps of $\pi'$
can be rightwards, straight on, or leftwards. 
If we pay no attention to the straight-on steps, then each left step is followed immediately  by a right
step, and \emph{vice versa}.
\end{romlist}

By the above, the numbers of right and left turns of $\pi'$ have 
difference at most $1$, and moreover the same holds for any subwalk $\nu$ of $\pi'(w)$.
By Lemma \ref{l96}(c) or directly, 
no such $\nu$ can form a cycle. Hence $\pi'(w)$ (and therefore $\pi(w)$ also) is a SAW.
The proof is complete.  

\begin{figure}
\centerline{\includegraphics[width=0.35\textwidth]{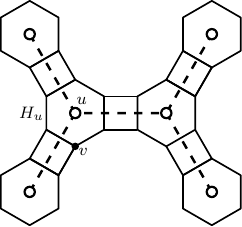}}
\caption{The graph $G$ with an embedded (dashed)
copy of the graph $P = \arc{p,p,p}$.}  
\label{111}
\end{figure}
  
\subsection{Proof that $\mu\ge\phi$ for $\arc{4,6,2p}$ with $p\geq 6$}
\label{caseC}

Let $G\in\TLF_3$ be infinite with type-vector $\arc{4,6,2p}$ 
where $p\geq 6$. 
(We include the case $p=6$, 
being the Archimedean lattice of Figure \ref{fig:arch}.)
Associated with $G$ is the graph $P:=\arc{p,p,p}$
as drawn in  Figure \ref{111}. 
As illustrated in the figure,
each vertex $u$ of $P$
lies in the interior of some size-$6$ face of $G$ denoted $H_u$. Let
$u$ be a vertex of $P$ and let $v$ be a vertex of $H_u$.
Let $\pi=(u_0=u,u_1,\dots,u_n)$ 
be a SAW on $P$ from $u$. We shall explain
how to  associate with $\pi$ a family of SAWs on $G$ from $v$.
The argument is similar to that of the proof of 
Proposition \ref{lem:f4}.

A hexagon $H$ of $G$ has six edges, which we denote  
according  to approximate compass bearing. For example, $p_\w(H)$ 
is the edge on  the west side of $H$, and similarly
$p_{\nw}$, $p_{\nea}$, $p_\e$, $p_{\se}$, $p_{\sw}$.
For definiteness, we assume that $H_u$ has
orientation as in Figure \ref{111}, and $v\in p_{\sw}(H_u)$, as in Figure \ref{112}.

\begin{figure}
\centerline{\raisebox{27pt}{\includegraphics[width=0.32\textwidth]{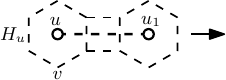}}
\q\includegraphics[width=0.5\textwidth]{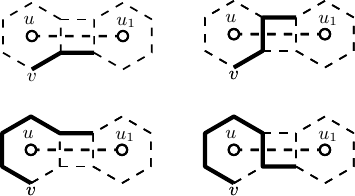}}
\caption{The step $(u,u_1)$ on $P$ may be mapped to
any of the four SAWs on $G$ from $v$, as drawn on the right.}  
\label{112}
\end{figure}

Let $\Si_n(u)$ be the set of $n$-step
SAWs on $P$ from $u$, the first edge of which is 
either north-westwards or eastwards 
(that is, away from $p_{\sw}(H_u)$).  \emph{Suppose the 
first step of the SAW $\pi\in \Si_n(u)$ is to the neighbour $u_1$
that lies eastward of $u$} (the other  cases are similar). 
With the step $(u, u_1)$, we may associate any of four SAWs on $G$
from $v$ to $p_{\w}(H_{u_1})$, namely those illustrated in Figure \ref{112}.
These paths have lengths $2$, $3$, $5$, $6$.
If $u_1$ lies to the north-west of $u$, the corresponding four paths 
have lengths $3$, $4$, $4$, $5$.

We now iterate the above construction. At each step of $\pi$,
we construct a family of $4$ SAWs on $G$ that extend the walk 
on $G$ to a new hexagon. When this process
is complete, the ensuing paths on $G$ are all SAWs, and they are distinct.

Let $Z_P(\zeta)$ (\resp, $Z_G(\zeta)$) be
the generating function of SAWs on $P$ from $u$  (\resp, on $G$
from $v$), subject to above italicized assumption.
In the above construction, each step of $\pi$ is replaced
by one of four paths, with lengths lying in either $(2,3,5,6)$
or $(3,4,4,5)$, depending on the initial vertex of the segment. 
Since
$$
\zeta^2+\zeta^3+\zeta^5+\zeta^6 \ge \zeta^3+2\zeta^4+\zeta^5
\q [= \zeta^3(1+\zeta)^2],\qq \zeta\in \RR,
$$
we have, by the argument that led to \eqref{eq:mudiff2}, that
\begin{equation}\label{eq:rad}
Z_P(\zeta^3(1+\zeta)^2) \le Z_G(\zeta), \qq \zeta \ge 0.
\end{equation}

Let $z>0$ satisfy
\begin{equation}\label{eq:gl54}
z^3(1+z)^2=\frac1{\mu(P)}.
\end{equation}
Since $1/\mu(P)$ is the radius of convergence of $Z_P$,
\eqref{eq:rad} implies $z \ge 1/\mu(G)$, which is to say that
\begin{equation}\label{eq:gl55}
\mu(G) \ge \frac 1z.
\end{equation}
As in Section \ref{caseA}, $\mu(P)\ge\phi$. It suffices for $\mu(G)\ge\phi$, therefore, to show that
the (unique) root in $(0,\oo)$ of
$$
x^3(1+x)^2=\frac1{\phi}
$$
satisfies $x \le 1/\phi$, and it is easily checked that, in fact,
$x=1/\phi$.

\begin{remark}[Archimedean lattice {$\AA=\arc{4,6,12}$}]
\label{rem:arch-hex2}
The inequality $\mu(\AA)\ge\phi$ may be strengthened.
In the special case $p=6$, we have that $\mu(P)=\sqrt{2+\sqrt 2}$;
see \cite{ds}. By \eqref{eq:gl54}--\eqref{eq:gl55},
$\mu(G) \ge 1.676$.
\end{remark}

\subsection{Proof that $\mu\ge\phi$ for $\arc{5,8,8}$}
\label{caseD}

Let $G\in\TLF_3$ be infinite with type-vector $\arc{5,8,8}$.
The proof in this case is essentially the same as that of Section \ref{caseB},
but with squares replaced by pentagons.

Let $G'$ be the simple graph obtained from $G$ by contracting each size-$5$ face of $G$ to a vertex. 
As in the corresponding step at the beginning of Section \ref{caseB},
we have that $G'\in\TLF_5$ is infinite with 
type-vector $\arc{4,4,4,4,4}$. 
A midpoint of $G$ is called \emph{pentagonal} if it belongs
to a size-5 face, and \emph{non-pentagonal} otherwise.

We opt to consider SAWs that start and end at midpoints of edges.
Let $m$ be the midpoint of some non-pentagonal edge of $G$, and let
$\Si_n(m)$ be the set of $n$-step SAWs on $G$ from $m$. 
We will find an injection from $\WW_n$ to
$\Si_n(m)$. Let $w=(w_1w_2\cdots w_n)\in \WW_n$.
We construct as follows a non-backtracking path $\pi=\pi(w)$ 
on $G$ starting from $m$. The first step of $\pi(w)$ is
$(v,v')$ where $v'$ is an arbitrarily chosen midpoint neighbouring $m$.
We write $\pi=(\pi_0,\pi_1,\dots,\pi_n)$.

For any walk $\pi'$ of $G'$, let $\rho(\pi')=r(\pi')-l(\pi')$,
where $r(\pi')$ (\resp, $l(\pi')$) is the number of right (\resp, left) turns of $\pi'$.
Since walks move between midpoints, each step of $\pi'$ involves a turn,
and thus the terminology is consistent with its previous use.

We iterate the following for $k=2,3,\dots,n$ (cf.\ the construction of Section \ref{caseB}).
\begin{numlist}
\item Suppose $(w_{k-1}w_k) = $ (HV).
The midpoint $\pi_k$ is chosen to be pentagonal according to the following rules.
\begin{letlist}

\item
 If $\pi_{k-1}$ is pentagonal, the 
next point $\pi_k$ is also pentagonal.

\item 
Suppose $\pi_{k-1}$ is non-pentagonal.
On contracting $G$ to $G'$, the 
path on $G$, so far, gives rise to a walk $\pi'$ on $G'$. 
If $\rho(\pi')<0$ (\resp, $\rho(\pi')\geq 0$), then the next turn
of $\pi$ is to the left (\resp, right).

\end{letlist}

\item Suppose $(w_{k-1}w_k) = $ (HH).
\begin{letlist}

\item If $\pi_{k-1}$ is pentagonal, then 
$\pi_k$ is non-pentagonal.

\item Suppose $\pi_{k-1}$ is non-pentagonal.
In the notation of 1(b) above, if $\rho(\pi')<0$ (\resp, $\rho(\pi')\geq 0$), then the next turn
of $\pi$ is to the right (\resp, left).
\end{letlist}

\item Suppose $(w_{k-1}w_k) = $ (VH), and note that
$\pi_{k-1}$ is necessarily pentagonal. If $\pi_{k-2}$ is pentagonal (\resp,
non-pentagonal), then $\pi_k$ is non-pentagonal (\resp, pentagonal). 
\end{numlist}

\begin{figure}
\centerline{\includegraphics[width=0.7\textwidth]{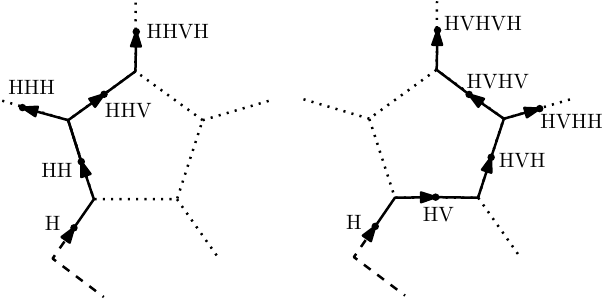}}
\caption{The dashed line is the projected SAW $\pi'$ on $G'$, assumed 
in the figure to satisfy $\rho(\pi')\ge 0$.
When $\rho(\pi')\ge 0$ (\resp, $\rho(\pi')<0$),  
the projection may move  leftwards but not rightwards 
(\resp, rightwards but not leftwards) at its next pentagon.}  
\label{hvp}
\end{figure}

We claim that the  mapping 
$\pi:\WW_n \to \Si_n(m)$ is an injection, and this claim is justified
very much as in the corresponding step of Section \ref{caseB}. 
It is straightforward that $\pi$ is an injection from $\WW_n$ to 
the set of $n$-step non-backtracking paths of $G$ from $m$,
and it suffices to show that any $\pi(w)$ is self-avoiding. 

Points (i)--(iv) of Section \ref{caseB} are replaced in the current setting by the following,
illustrated in Figure \ref{hvp}.
\begin{romlist}
\item Any step of $\pi$ leading to a non-pentagonal midpoint corresponds to the letter H. 
Thus, $\pi'=\pi'(w)$ takes a step
only (but not invariably) when H appears.
\item Each non-pentagonal midpoint of $\pi$ is followed by a pentagonal midpoint of some size-$5$ face $F$. 
Having touched a size-$5$ face 
$F$, the walk $\pi$ proceeds around $F$ before departing to the unique non-pentagonal 
midpoint available from the point of departure.
\item The walk $\pi$ never uses consecutively  more then three 
midpoints of any $F$. In addition, $\pi'$ is non-backtracking.
\item The projected walk $\pi'$ takes steps on $G'$. The steps of $\pi'$
can be rightwards,  leftwards, or \lq between'. 
If we pay no attention to the \lq between' steps, then each left step is followed immediately  by a right
step, and \emph{vice versa}.
\end{romlist}

As in Section \ref{caseB}, we need to show that, after contracting each pentagon to a vertex, 
the ensuing non-backtracking walk $\pi'(w)$ is a SAW on $G'$. For any subwalk 
$\nu$ of $\pi'(w)$, it may be checked (see (iv) above,
as in the proof of Section \ref{caseB}),
that its numbers of right and left turns differ by at most $1$. 
By Lemma \ref{l96}(c) or directly, $\nu$ cannot form a cycle. 
Hence $\pi'(w)$ (and therefore $\pi(w)$ also) is a SAW, and the proof is complete.

\section{Groups with two or more ends}\label{sec:ends}

\subsection{Groups and ends}
The number of \emph{ends} of a connected graph $G$ is the supremum over its finite subgraphs
$H$ of the number of infinite components that remain after removal of $H$. 
We recall from \cite[Prop.\ 6.2]{BM91} that the number of ends of an infinite transitive graph
is invariably $1$, $2$,  or $\oo$. Moreover, a two-ended (\resp, $\oo$-ended)
graph is necessarily amenable (\resp, non-amenable).
The number of ends of a finitely generated group is the number of 
ends of any of its Cayley graphs.

We present two principal theorems in this section concerning
Cayley graphs of $2$-ended and $\oo$-ended groups, with further results
in Section \ref{sec:mult}. Theorems \ref{thm:2endgp} and \ref{thm:minimal}
are proved, \resp, in Sections \ref{sec:pf2end} and \ref{sec:pf-infty}.
As in \cite{GrL3}, all Cayley graphs in this paper are in their \emph{simple} form, that is,
multiple edges are allowed to coalesce. 

\begin{theorem}\label{thm:2endgp}
Let $\Ga=\langle S\mid R\rangle$ be a finitely
presented group with  two ends. Any Cayley graph $G$ of
$\Ga$ with degree $3$ or more satisfies $\mu(G) \ge \phi$.  
\end{theorem}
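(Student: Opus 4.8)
The plan is to reduce to the cubic case, invoke the classification of two-ended groups, and treat the two resulting structures separately. First, if $G$ has degree $d\ge4$ then $\mu(G)\ge\sqrt{d-1}\ge\sqrt3>\phi$ by \cite[Thm 1.1]{GL-Comb}, so it suffices to treat $G\in\sG_3$. A finitely presented (indeed finitely generated) group with two ends is virtually $\ZZ$; by Stallings' ends theorem together with the structure of virtually cyclic groups, $\Ga$ contains a finite normal subgroup $N$ such that $\Ga/N$ is isomorphic to either $\ZZ$ or the infinite dihedral group $D_\oo$. These two cases are handled in turn.

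If $\Ga/N\cong\ZZ$, the composite $F\colon\Ga\twoheadrightarrow\Ga/N\cong\ZZ$ is a surjective homomorphism onto $\ZZ$, hence a \GHF, and therefore (by \cite{GL-Cayley}) a transitive \ghf\ of $G$: it is $\Ga$-difference-invariant, and since $\Ga=\langle S\rangle$ some generator $s$ has $F(s)\ne0$, so every vertex $v$ has neighbours $vs$ and $vs^{-1}$ strictly above and below it. Theorem \ref{thm:1}(b) then gives $\mu(G)\ge\phi$.

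The substantive case is $\Ga/N\cong D_\oo$, where $\Ga$ may have finite abelianization and so need not surject onto $\ZZ$. Let $\Ga_0\normal\Ga$ be the preimage of the index-$2$ cyclic subgroup of $D_\oo$, so $[\Ga:\Ga_0]=2$, $\Ga_0/N\cong\ZZ$, and there is a surjection $F_0\colon\Ga_0\to\ZZ$; fix $\tau\in\Ga\smi\Ga_0$ and note that conjugation by $\tau$ inverts $\Ga_0/N$. I would then set $h\colon V(G)=\Ga\to\RR$ to equal $F_0$ on $\Ga_0$ and $c-F_0(\tau^{-1}\,\cdot\,)$ on $\tau\Ga_0$; a short computation shows $h$ is $\Ga_0$-difference-invariant, and for the right constant $c$ the local condition of Definition \ref{def:height} holds, so that $(h,\Ga_0)$ is a \ghf\ with $\Ga_0$ acting with exactly two orbits. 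Plain averaging over the two cosets, as in Proposition \ref{prop1}, is of no use here because the inversion makes the two summands cancel, so I would finish by exploiting the fine structure of $h$ directly. Two facts are available: no transitive cubic graph with a cut-edge is two-ended, so $G$ has no cut-edge; and $h$ changes by a bounded amount along every edge while growing linearly along each of the two ends. Using these, one verifies that $h$, or a harmonic modification of it, meets the increment conditions \eqref{eq:1}--\eqref{eq:2} (cf.\ Remark \ref{rem:lesser}), placing $G$ in $\Qh$ so that Theorem \ref{thm:1}(a) applies; should the required ratio bound fail, one uses instead the finite girth $g\ge3$ of $G$ and checks the weaker hypotheses \eqref{qm}--\eqref{ag} of Theorem \ref{thm:1}(c). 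Equivalently and more robustly, one builds the injection $\WW_n\hookrightarrow\Si_n(0)$ of Definition \ref{saw-def} by hand, routing each eastward ladder walk along the bi-infinite strip that $G$ forms and using the absence of a cut-edge to realise each ladder rung; then $\mu(G)\ge\phi$ follows from \eqref{eq:3}.

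I expect the main obstacle to lie precisely in this last point in the $D_\oo$ case: verifying that the lifted walks are self-avoiding. The argument behind Theorem \ref{thm:1}(a) keeps $M_u$ constant along the walk by appealing to transitivity of $\sH$, which is unavailable once $\Ga_0$ acts with two orbits; the replacement must combine the two-orbit increment data of $h$ with the girth bound and the absence of small cuts in $G$, and getting the bookkeeping right — controlling the running excess of ``up'' over ``down'' and ``sideways'' steps, much as in Lemma \ref{cm} — is where the genuine work resides.
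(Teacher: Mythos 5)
Your reduction to the cubic case, your treatment of the quotient-$\ZZ$ case via a \GHF\ and Theorem \ref{thm:1}(b), and your diagnosis that plain coset-averaging as in Proposition \ref{prop1} cancels in the dihedral case are all sound. But in the $D_\oo$ case the proposal stops exactly where the proof has to begin, and the steps you sketch there do not go through as stated. The piecewise function $h$ you define from $F_0$ and its reflected copy is not harmonic, so Theorem \ref{thm:1}(a)/(d) is not available for it, and nothing forces it (or an unspecified ``harmonic modification'') to satisfy \eqref{eq:1}--\eqref{eq:2}, nor the hypotheses \eqref{qm} and \eqref{ag} of part (c): since $\Ga_0$ has two orbits, the increments of $h$ genuinely depend on the orbit, and the constancy of $M_u$ that drives those arguments is lost. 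Your fallback --- routing eastward ladder walks ``along the bi-infinite strip'' using the absence of a cut-edge --- is not an argument: a two-ended cubic Cayley graph is not a ladder, no injection is specified, and self-avoidance of the lifted walks (which you yourself flag as the real work) is never established. Note also that the ladder $\LL$ itself has $\mu=\phi$, so the bound is attained in this class and any correct argument must be exactly tight; hand-waving here cannot be absorbed by slack.

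What the paper does instead, and what is missing from your write-up, is the following. Take $\beta$ of infinite order with $\sH=\langle\beta\rangle\normal\Ga$ of finite index and $\beta$ preserving the ends (\cite{Co70}); extend $F(\beta^n)=n$ to a harmonic, \hdi\ function $h$ on $\Ga$ by \cite[Thm 3.4(ii)]{GL-Cayley}; then replace the naive average by the \emph{signed} average $g(\a)=\sum_i\sgn(\kappa_i)h(\kappa_i\a)$, where $\sgn(\kappa_i)=\pm1$ records whether $\kappa_i$ preserves or swaps the ends. This is precisely the fix for the cancellation you observed: $g$ is harmonic, $\Ga$-\emph{skew}-difference-invariant, and non-constant (shown via a cut/flow argument giving $s(g)=|I|\,s(h)>0$). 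Skew-invariance means every vertex sees, up to a global sign, the same three increments $a,b,c$ with $a+b+c=0$ and (after scaling) $|a|,|b|\le c=1$; one then counts \emph{maximal} SAWs (walks whose endpoint is a strict maximum of $g$ along the walk, which guarantees self-avoidance of the extensions), splitting into the cases $(a,b,c)=(0,-1,1)$, $(-\tfrac12,-\tfrac12,1)$, and $b<a<0$, and obtaining in each case a Fibonacci-type recursion whose growth rate is at least $\phi$. Your proposal contains neither the skew-invariant harmonic function, nor the maximality device that replaces the lost transitivity, nor the case analysis and recursions; these constitute the proof, so the gap is genuine.
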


We have only partial results for $\infty$-ended Cayley graphs of 
finitely generated groups $\Ga=\langle S \mid R\rangle$, as given in Theorem \ref{thm:minimal}.
As usual, we consider only finite generator sets $S$ with $\id \notin S$
and which are \emph{symmetric} in that $S=S^{-1}$. 
A generator set $S$ is called \emph{minimal} 
if no proper subset is a generator set.
Stallings \cite{Stall1,Stall2} proved that a group with two or  infinitely many ends
is either an HNN extension or an amalgamated product (see Section \ref{sec:mult}
for further details, and for an explanation of the above terms).
The proof of Theorem \ref{thm:minimal}  is found  in Section \ref{sec:pf-infty}.

\begin{theorem}\label{thm:minimal}
Let $\Gamma$ be a finitely generated group with infinitely many ends, and 
let $m$ $(\ge 3)$ be the minimum cardinality of a generator set.
\begin{letlist}
\item If $m \ge 4$, then $\mu(G) \ge \sqrt 3$ $(>\phi)$ for any Cayley graph $G$ of $\Ga$.
\item Suppose $m=3$ and $\Ga$ is an HNN extension. Any Cayley graph $G$ of $\Ga$ satisfies
$\mu(G) \ge\phi$. 
\item Suppose $\Ga=H *_C K$ is an amalgamated product
with generator set $S \subseteq H\cup K$ satisfying $|S|=3$.
There exists a 
minimal generator set $S' \subseteq H\cup K$
with $|S'|=3$ whose Cayley graph $G'$ satisfies $\mu(G') \ge \phi$.
\item If $\Ga=H*_CK$ is an amalgamated product,  there exists
a minimal generator set $S$ whose Cayley graph $G$ satisfies $\mu(G) \ge \phi$.
\end{letlist}
\end{theorem}

Further results are presented in 
Theorem \ref{moe},
and will be used in the proof of Theorem \ref{thm:minimal}.
We do not know whether the above two theorems can be extended 
to multiply ended transitive \emph{graphs}. Indeed, we have no example of a
$2$-ended, transitive, cubic graph that is not a Cayley graph (see \cite{Wat}). 

\subsection{Proof of Theorem \ref{thm:2endgp}}\label{sec:pf2end}
We are grateful to Anton Malyshev 
for his permission to present his ideas in this proof.
Let $\Ga$ be as in the statement of the theorem, and recall from 
\cite[Thm 1.6]{Co70} (see also \cite{Ho72,Ox72,Stall1}) 
that there exists $\beta \in \Ga$ with infinite order
such that the infinite cyclic subgroup $\sH:=\langle \beta\rangle$ 
of $\Ga$ has finite index, and $\be$ preserves the ends of $\Ga$. By Poincar\'e's theorem for subgroups, we may 
choose $\beta$ such that $\sH \normal \Ga$.  We write $\om_1$ 
for the end of $\Ga$ containing the ray $\{\be^k\id: k =1,2,\dots\}$, and $\om_0$ for its other end. 

Let $F:\sH\to\ZZ$ be given by $F(\be^n)=n$, and let
$G$ be a locally finite Cayley graph of $\Ga$. 
By \cite[Thm 3.4(ii)]{GL-Cayley},
there exists a harmonic, \hdi\ function $h:\Ga\to\RR$ that
agrees with $F$ on $\sH$.

Let $g$ be a harmonic function on $G$.
For an edge $\vec e=[u,v\rangle$ of $G$ endowed
with an orientation from $u$ to $v$, we write $\De g(\vec e) = g(v)-g(u)$.
A \emph{cut} of $G$ is a finite set of edges that separates the two ends of $G$; a cut is \emph{minimal}
if no strict subset is a cut. The ($g$-)\emph{size} of a cut $C$ is given as the aggregate $g$-flow across $C$, that is,
\begin{equation*}
s_C(g) = \sum_{\vec e \in C} \De g(\vec e),
\end{equation*}
where the sum is over all edges in $C$ oriented such that initial vertex (\resp, final vertex) 
of each edge is connected in $G \smi C$ to  $\om_0$ (\resp, $\om_1$).
Here are two observations of which the first is standard.
(See \cite[Chap.\ 1]{G-pgs} for a general account of flows and 
electrical networks.)
\begin{letlist}
\item 
Since $g$ is assumed harmonic, $s_C(g)$ is constant for all minimal cuts $C$.
(\emph{Outline proof.} 
Let $C_1$, $C_2$  be minimal cuts, and let
$D$ be a minimal cut such that each
$C_i$ lies in the connected component of $G \smi D$ containing $\om_1$. 
Let $i\in\{1,2\}$ and let $F_i$ be the 
union of bounded components of
$G \smi (C_i\cup D)$. By summing $\De g(\vec e)$ over all oriented edges $\vec e=[u,v\rangle$ of $G$ with $u \in F_i$
(so that each undirected edge of $F_i$ appears twice, once with each orientation),
and using the fact that $g$ is harmonic, we find that $s_{C_i}(g)=s_D(g)$. The claim follows.) 
We write $s(g):= s_C(g)$ for the \emph{size} of $g$, that is, 
the common $g$-size of all minimal cuts $C$.

\item 
We have that $s(h)\ne 0$. 
(\emph{Outline proof}. 
Assume $s(h)=0$, so that $s_C(h)=0$ for all minimal cuts $C$. Let $C$ be a minimal cut and find $k \ge 1$ such that
$\id$ and $\beta$ lie in the same bounded component $F$ of $G\smi(\be^{-k}C \cup \be^k C)$. Let $F'$ be obtained from $G$ by identifying all endpoints of
edges of $\be^{-k}C$ which do not lie in $F$ (\resp, 
all endpoints of edges of $\be^k C_2$ not lying in $F$) as  a single composite vertex
$c^-$ (\resp, $c^+$). Since $s_{\be^{-k}C}(h)=s_{\be^kC}(h)=0$, 
the function $h$ is harmonic on $F'$, and in addition the total $h$-flow 
through $F'$ from $c^-$ to $c^+$ is zero.
It follows that $h$ is constant on $F$. In particular, $h(\id)=h(\beta)$,
a contradiction.) 
\end{letlist}

We now develop the argument of Proposition \ref{prop1}. 
Let $\{\kappa_i : i\in I\}$ be a set of
representatives of the cosets of $\sH$, so that $\Ga/\sH=\{\kappa_i\sH: i \in I\}$ and $|I|<\oo$.
For $\kappa \in \Ga$, we write $\sgn(\kappa)=1$ (\resp, $\sgn(\kappa)=-1$)
if the ends of $\Ga$ are $\kappa$-invariant (\resp, the ends are swapped under $\kappa$). 
Note that
\begin{equation}\label{eq:swap}
s(\kappa h) = \sgn(\kappa)s(h)
\end{equation}
where $\kappa h(\a):= h(\kappa\a)$ for $\a\in\Ga$. 

Let $g:\Ga\to\RR$ be given by
\begin{equation}\label{eq:swap2}
g(\a) = \sum_{i\in I} \sgn(\kappa_i)h(\kappa_i\a), \qq \a\in\Ga.
\end{equation}
Since $g$ is a sum of harmonic functions, it is harmonic. 
Furthermore,  (as in the proof of Proposition \ref{prop1}),
$g$ is $\Ga$-\emph{skew}-difference-invariant in that
\begin{equation}\label{eq:skew}
g(\a v)-g(\a u) = \sgn(\a)[g(v)-g(u)], \qq u,v \in \Ga,\ \a \in \Ga.
\end{equation}
By \eqref{eq:swap} and \eqref{eq:swap2}, $s(g)=|I| s(h)\ne 0$, whence
$g$ is non-constant.

Let $a$, $b$, $c$ denote the values of $g(v)-g(u)$ for  $v\in\pd u$.
By \eqref{eq:skew}, $a$, $b$, $c$ are independent of the
choice of $u$ up to negation, and, since $g$ is harmonic,
$a + b + c = 0$. By re-scaling and re-labelling where necessary,  since $g$ is non-constant
we may assume $|a|,|b|\le c = 1$. The directed edge $\vec e=[u,v\rangle$ is \emph{labelled} 
with the corresponding letter (with ambiguities handled as below), and is allocated weight $\De g(\vec e)$. 
Thus, a directed edge labelled $d$ has weight $\pm d$.

A SAW $\pi=(\pi_0,\pi_1,\dots,\pi_n)$ is called \emph{maximal} if
$g(\pi_k) < g(\pi_n)$ for $k < n$. We shall construct a family of maximal SAWs 
$\pi$ of sufficient cardinality to yield the claim. Choose $(\pi_0,\pi_1)$ such that
$g(\pi_1)=g(\pi_0)+1$.
There are three possibilities for the vector $(a,b,c)$.
\begin{letlist}
\item  Suppose $(a, b, c) = (0,-1, 1)$. 
For $n \ge 1$, a maximal SAW $\pi=(\pi_0,\pi_1,\dots,\pi_n)$ can be extended to two
distinct maximal SAWs by adding either (i)  the directed edge $[\pi_n,w\rangle$ 
with weight $1$, 
or (ii) the  directed edge $[\pi_n,w\rangle$ with weight $0$, 
followed by the edge $[w,x\rangle$ with 
weight $1$.
The number  $w_n$ of such walks of length $n$ from a given starting point
satisfies $w_{n}=w_{n-1}+w_{n-2}$, whence $\mu \ge \phi$.

\item Suppose $(a, b, c) = (-\tfrac12,-\tfrac12, 1)$.
Since there are no odd cycles comprising 
only edges with weight $\pm\tfrac12$, 
the labels of such edges, $\langle u,v\rangle$ say, 
may be arranged in such a way 
that $[u,v\rangle$ and $[v,u\rangle$ receive the same label.  
A maximal SAW $\pi=(\pi_0,\pi_1,\dots,\pi_n)$ that ends with a $c$-edge can 
be extended by following sequences of
additional directed edges labelled one of $ac$, $bc$, $abac$, $babc$,
thus creating a new walk denoted $\pi'$. 
Since $\pi$ is maximal, we have that $h(\pi_n)-h(\pi_{n-1})=1$.
By tracking the signs of the weights of any  additional edge, we see 
that any such $\pi'$ is both self-avoiding and maximal.  
The number $w_n$ of such SAWs with length $n$ from a given starting point 
satisfies $w_n=2w_{n-2}+2w_{n-4}$.
Therefore, $\lim_{n\to\oo} w_n^{1/n}$ equals the root in $[1,\oo)$
of the equation $x^{4} = 2(x^{2}+1)$,
namely $x = \sqrt{1+\sqrt 3}>\phi$.

\item
Suppose $b<a<0$, $-a-b=c=1$. 
There are no
cycles comprising only directed edges labelled either $a$ or $b$.
A maximal SAW $\pi=(\pi_0,\pi_1,\dots,\pi_n)$ that ends with a $c$-edge can 
be extended by following edges labelled either (i) $ac$, or 
(ii) $bc$, $babc$, $bababc$, and so on; 
any such extension results in a maximal SAW, as in case (b) above. 
The number $w_n$ of such SAWs with an
odd length $n$ from a given starting point 
satisfies $w_{n}= 2w_{n-2} + w_{n-4} + w_{n-6} + \cdots+w_1$. 
It is easily checked that $w_n \ge C\phi^n$, as required.
\end{letlist}
The proof is complete.

\subsection{Multiply ended graphs}\label{sec:mult}

Let $\Ga$ be an infinite, finitely generated group. By 
 Stalling's splitting theorem (see \cite{Stall1,Stall2}),
$\Ga$ has two or more ends if and only if one of the following two properties holds.
\begin{romlist}
\item  
$\Gamma=\langle S,t \mid R,\,t^{-1}C_1t=C_2\rangle$ is an HNN extension,
where $H=\langle S\mid R\rangle$ is
a presentation of the group $H$, $|S|<\oo$,
$C_1$ and $C_2$ are isomorphic finite subgroups of $H$, 
and  $t$ is a new symbol.
\item $\Ga=H*_C K$ is a free product with amalgamation, 
where $H$, $K$ are groups, and $C\ne H,K$ is a finite group.
\end{romlist}
A further characterization of $\oo$-ended groups is provided in \cite{Stall2}
(see also \cite[Sect.\ 1.3]{Cannon}).

Part (i) above may be taken as the definition of an HNN extension, 
named as an acronym of the authors of \cite{hnn}. 
The amalgamated product $H*_C K$ of part (ii)  is 
an extension of the notion of the free product, and is defined as follows.
Let $H$, $K$, $C$ be groups, 
and let $\phi:C\to H$, $\psi: C \to K$ be injective homomorphisms.
Let $N$ be the smallest normal subgroup of the free product $H * K$ containing all
elements of the form $\phi(c)\psi(c)^{-1}$, $c \in C$. Then  $H*_C K$ is
defined to be the quotient group $(H*K)/N$.
 
Readers are referred to \cite{BMR,LS77,MKS}
for the background and properties of amalgamated products.
Although the group $C$, in (ii), is not required to
be a subgroup of either $H$ or $K$, when we 
speak of $C$ as such a subgroup we mean the image of $C$ under the corresponding map 
($\phi$ or $\psi$, as appropriate).   
We will generally assume that $C \ne H,K$. 
We next remind the reader of the normal form theorem (Theorem \ref{l23}) for amalgamated products, 
and then we summarise the results of this section in Theorem \ref{moe}. 

\begin{theorem}[Normal form,
{{\cite[Sect.\ 2.2]{BMR}}, \cite[p.\ 187]{LS77}, \cite[Cor.\ 4.4.1]{MKS}}] \label{l23}
\mbox{}
\begin{letlist}
\item Every  $g \in H*_C K$ can be written in the \emph{reduced form}
$g=cv_1\cdots v_n$
where $c \in C$, and the  $v_i$ lie in either $H\setminus C$ or $K\setminus C$ and they alternate between 
these two sets. 
The length $l(g):=n$ of $g$ is uniquely determined, and $l(g)=0$ if and only if $g\in C$.
Two such expressions of the form $v_1\cdots v_n$,
$w_1\dots w_n$  represent 
the same element in $H*_C K$ if and only if there exist $c_0\ (=\id),c_2,\dots, c_n\ (=\id)\in C$ such that
$w_k=c_{k-1}v_k c_k^{-1}$.

\item 
Let $A$ (\resp, $B$) be a set of right coset representatives of (the image of) $C$ in $H$ (\resp, $K$),
where the representative of $C$ is $\id$.
Every  $g \in H*_{C}K$ can be expressed uniquely in the \emph{normal form}
$g=cx_1\cdots x_n$
where $c \in C$, and the  $x_i$ lie in either $A$ or $B$, and they alternate between 
these two sets. 
\end{letlist}
\end{theorem}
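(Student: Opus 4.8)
The plan is to prove part (b) by the classical permutation (van der Waerden) trick, and then to obtain part (a) as a consequence; the alternative route via the action of $\Ga$ on its Bass--Serre tree would also work but is less self-contained. Existence of the normal form is the easy half of (b): fix right-coset decompositions $H=\bigsqcup_{a\in A}Ca$ and $K=\bigsqcup_{b\in B}Cb$, with $\id$ representing $C$ in each. Any element of $H*_CK$ is a finite product of elements of $H$ and $K$; pushing every $C$-factor to the far left (rewriting $hc$ as $c'a'$ and $kc$ as $c'b'$ as one goes) and amalgamating consecutive syllables that lie in the same factor, one arrives at an expression $g=cx_1\cdots x_n$ of the stated form.

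For uniqueness, let $W$ be the set of formal tuples $\mathbf x=(c;x_1,\dots,x_n)$ with $n\ge 0$, $c\in C$, each $x_i$ in $(A\setminus\{\id\})\cup(B\setminus\{\id\})$, and the $x_i$ alternating between the sets $A\setminus\{\id\}$ and $B\setminus\{\id\}$. I would define a left action of $H$ on $W$ by the explicit rule: given $h\in H$, if $n=0$ or $x_1\in B$ write $hc=c'a'$ with $c'\in C$, $a'\in A$ and put $h\cdot\mathbf x=(c';x_1,\dots,x_n)$ when $a'=\id$, and $(c';a',x_1,\dots,x_n)$ otherwise; if $x_1\in A$ write $hcx_1=c'a'$ with $c'\in C$, $a'\in A$ and put $h\cdot\mathbf x=(c';x_2,\dots,x_n)$ when $a'=\id$, and $(c';a',x_2,\dots,x_n)$ otherwise. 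Define a left action of $K$ on $W$ symmetrically. The verifications needed are: (i) each rule really is a group action ($\id$ acts trivially, and $(h_1h_2)\cdot\mathbf x=h_1\cdot(h_2\cdot\mathbf x)$, checked by running through the cases above --- the only delicate case being associativity when one of the intermediate $a'$ equals $\id$); and (ii) the $H$-action and $K$-action restrict to the same action of the amalgamated subgroup $C$ --- immediate, since for $\g\in C$ both send $(c;x_1,\dots)$ to $(\g c;x_1,\dots)$. By the universal (pushout) property of $H*_CK$, these two actions then combine into a single action of $H*_CK$ on $W$. A short induction on $n$ now shows that if $g=cx_1\cdots x_n$ is in normal form then $g\cdot(\id;\,)=(c;x_1,\dots,x_n)$; hence $n$, the syllables $x_i$, and the leading element $c$ are all determined by $g$ alone, which is the uniqueness assertion. (The ``if'' direction of the last sentence of (b) is the telescoping identity $\prod_k(c_{k-1}v_kc_k^{-1})=v_1\cdots v_n$, using $c_0=c_n=\id$; the ``only if'' is what has just been shown.)

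Part (a) follows. Given a reduced expression $v_1\cdots v_n$, write $v_i=c_i'x_i$ with $x_i$ the coset representative of $Cv_i$ and $c_i'\in C$; since a reduced expression has no syllable in $C$, each $x_i\ne\id$, and hence pushing the $C$-factors to the left produces no cancellation and yields a normal form of length exactly $n$. By (b), $n=l(g)$ is well defined, and $l(g)=0$ iff $g\in C$. For the last clause, if $v_1\cdots v_n=w_1\cdots w_n$ are two reduced expressions of the same element, reducing both to normal form and invoking the uniqueness from (b) gives $Cv_k=Cw_k$ for every $k$; unwinding the leftward pushes then exhibits $c_0=\id,c_1,\dots,c_n=\id\in C$ with $w_k=c_{k-1}v_kc_k^{-1}$. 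I expect the only genuine labour to be verification (i): checking that the displayed formulas do define left actions of $H$ and of $K$ on $W$, with care in the cancellation cases; everything after that is formal.
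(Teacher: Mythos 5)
The paper does not prove this statement at all: it is quoted as a classical result with citations to \cite{BMR}, \cite{LS77} and \cite{MKS}, so there is no internal proof to compare against. Your argument is essentially the standard textbook proof of the normal form theorem for amalgamated products --- existence by pushing the $C$-factors to the left along fixed right-coset representatives, uniqueness by the van der Waerden permutation trick (letting $H$ and $K$ act on the set of formal normal-form tuples, checking the two actions agree on $C$, and invoking the universal property of $H*_CK$) --- which is exactly the route taken in the cited sources, and it is sound; the action you write down is the correct one, and the induction $g\cdot(\id;\,)=(c;x_1,\dots,x_n)$ does yield uniqueness once the action axioms are verified. One small correction in your deduction of part (a): uniqueness of the normal form does \emph{not} give $Cv_k=Cw_k$. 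When you convert $v_1\cdots v_n$ to normal form from the right via $v_kd_{k+1}=d_kx_k$ (with $d_{n+1}=\id$), the representative $x_k$ extracted at position $k$ is that of the coset $C\,v_kd_{k+1}$, not of $Cv_k$; what uniqueness gives is $x_k=y_k$ and $d_1=e_1$ for the two conversions. Your ``unwinding'' step is then the genuine argument: from $x_k=d_k^{-1}v_kd_{k+1}=e_k^{-1}w_ke_{k+1}$ one sets $c_k:=e_{k+1}d_{k+1}^{-1}$, obtaining $w_k=c_{k-1}v_kc_k^{-1}$ with $c_0=c_n=\id$, so the conclusion stands and the erroneous intermediate claim is simply not needed (indeed $w_k=c_{k-1}v_kc_k^{-1}$ gives $Cw_k=Cv_kc_k^{-1}$, which differs from $Cv_k$ in general). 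With that repair, and with the (admittedly tedious) verification of the action axioms carried out, your proof is complete and is the same proof the paper's references supply.
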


Here are the results of this section.

\begin{theorem}\label{moe}
\mbox{}
\begin{romlist}
\item 
Let $\Gamma=\langle S,t \mid R,\,t^{-1}C_1t=C_2\rangle$ be an HNN extension as above. 
Any locally finite Cayley graph $G$ of $\Gamma$ admits a group height function
(see \cite{GL-Cayley}).
If such $G$ is cubic, then $\mu(G) \ge \phi$.
\item 
Let $\Gamma=H *_C K$ be an amalgamated product as above.
\begin{letlist}
\item Suppose $C=\{\id\}$.
Let $S \subseteq H\cup K$
be a generator set of $\Ga$ satisfying $|S|\ge 3$.
The corresponding Cayley graph $G$ satisfies $\mu(G)\geq \phi$. 
\item Suppose $C\neq \{\id\}$. Any generator set $S$ satisfying
both 
\begin{numlist} 
\item $S\cap C\neq \es$, $|S|\geq 3$, and
\item there exists $s_1\in S$ (\resp, $s_2\in S$)
with a normal form beginning with an element of $H\setminus C$
(\resp, an element of $K\setminus C$),
\end{numlist}
generates a Cayley graph $G$ with $\mu(G)\geq \phi$.
\item Suppose $C\neq \{\id\}$ and $C$ is a normal subgroup of both $H$ and $K$. 
Any generator set $S$ satisfying $S\cap C\neq \es$ 
generates a Cayley graph $G$ with $\mu(G)\geq \phi$.
\end{letlist}
\end{romlist}
\end{theorem}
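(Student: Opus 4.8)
The plan is to treat the HNN case (i) by a homomorphism to $\ZZ$, and the amalgamated‑product cases (ii) by constructing an injection from the set $\WW_n$ of eastward SAWs on $\LLp$ into $\Si_n(\id)$, in the spirit of Theorem \ref{thm:1}. In every case we may assume $G$ is cubic, since a Cayley graph in simple form has degree $|S|$, and $|S|\ge 4$ already gives $\mu(G)\ge\sqrt3>\phi$ by \cite{GL-Comb}. For (i), write $\Ga=\langle S,t\mid R,\,t^{-1}C_1t=C_2\rangle$ and put $F(t)=1$ and $F(s)=0$ for $s\in S$; since $C_1,C_2\subseteq\langle S\rangle$, every defining relator has $F$‑value $0$ (those from $R$ lie in $\langle S\rangle$, and in each relator of the form $t^{-1}ct\cdot\theta(c)^{-1}$ the two occurrences of $t^{\pm1}$ cancel), so $F$ extends to a surjective homomorphism $\Ga\to\ZZ$. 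By \cite{GL-Cayley} this $F$ is a transitive graph height function on any locally finite Cayley graph $G$ of $\Ga$, and Theorem \ref{thm:1}(b) then gives $\mu(G)\ge\phi$ when $G$ is cubic.

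For the amalgam $\Ga=H*_CK$, I would use the Bass--Serre tree $T$ of the splitting, whose edges are the cosets of $C$ and whose two vertex‑orbits are the cosets of $H$ and of $K$. The Cayley graph $G$ lies over $T$: the walk $\pi(w)$ to be built will be tracked by the $T$‑distance $d_T$ of its current vertex from a fixed base vertex, and the action of each generator on $T$ — controlled by the normal form of Theorem \ref{l23} — determines how $\pi(w)$ moves through $T$. The main step is to encode each $w\in\WW_n$ as an $n$‑step SAW $\pi(w)$ on $G$ for which $d_T$ is non‑decreasing, is \emph{strictly} increasing along each H‑step that pushes the walk into a new part of $T$, and is constant along each V‑step (which stays within the bounded part of $G$ lying over a single edge of $T$). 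Since in a word of $\WW_n$ every V is immediately followed by an H, and since each part of $G$ over a given edge of $T$ is visited on only one excursion, the monotonicity of $d_T$ forces $\pi(w)$ to be self‑avoiding; distinct words give distinct paths because the H/V pattern can be read off $\pi(w)$. This produces $|\Si_n(\id)|\ge|\WW_n|=\eta_n$, whence $\mu(G)\ge\phi$ by \eqref{eq:3}.

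It then remains to exhibit, under each hypothesis of (ii), the two ingredients the construction requires: a deterministic ``forward'' move realising the bag‑changing H‑steps, and at least one ``vertical'' alternative realising the V‑steps, arranged so that the number of admissible one‑step extensions at least obeys the Fibonacci recursion $\eta_n=\eta_{n-1}+\eta_{n-2}$. In (ii)(a) we have $C=\{\id\}$ and, since $|S|=3$, $\Ga=H*\ZZ/2$ where one factor $\ZZ/2$ has its single involutory generator and $H$ has $|S_H|=2$: the $\ZZ/2$‑edges are bridges and each $H$‑bag is a two‑generated Cayley graph, so inside a bag one may traverse $1$ edge in either of two ways, or $2$ edges in either of two ways, before leaving by its unique outgoing bridge; the resulting per‑bag generating factor is at least $2\zeta^2+2\zeta^3$, whose positive root lies below $1/\phi$. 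In (ii)(b), $s_1$ and $s_2$ provide the forward moves (used alternately, $s_1$ pushing one unit deeper on the $H$‑side of $T$ and $s_2$ on the $K$‑side), while $c_0\in S\cap C$ supplies the vertical move; the normal‑form conditions are precisely what guarantees that $s_1$ and $s_2$ genuinely increase $d_T$. In (ii)(c), normality of $C$ in both $H$ and $K$ makes $C$ normal in $\Ga$, with $\Ga/C=(H/C)*(K/C)$ a trivial‑amalgam free product; one runs the (ii)(a)‑type construction on $\Ga/C$ and lifts it through the fibres of the quotient $G\to G/C$, using $c\in S\cap C$ to move within a fibre, which preserves self‑avoidance and can only increase the walk count.

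The step I expect to be the main obstacle is the self‑avoidance verification in cases (ii)(b) and (ii)(c): the forward and vertical moves — and, in (b), the alternation between $s_1$ and $s_2$ — must be chosen so that $d_T$ increases strictly across every bag‑changing step and no part of $G$ lying over an edge of $T$ is ever re‑entered, which requires a careful analysis of how the generators act on the normal forms (equivalently, on $T$) and, in case (c), of how the $C$‑fibres are embedded in $G$.
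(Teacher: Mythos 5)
Your part (i) coincides with the paper's argument (a group height function sending $t\mapsto 1$, $S_H\mapsto 0$, then Theorem \ref{thm:1}(b)), and your treatment of (ii)(a) is a legitimate alternative: since the $\ZZ_2$-edges are cut edges and the bags form a tree, the per-bag factor $2\zeta^2+2\zeta^3$ does give $\mu\ge\sqrt{1+\sqrt3}>\phi$, which is in fact stronger than the paper's route (an explicit injection of $\WW_n$ via the normal form theorem, with the case $s^2=s_1s_2=\id$ reduced to the Fisher graph of the tree). The trouble lies in (ii)(b) and, above all, (ii)(c), exactly where you flag ``the main obstacle'': at those points your proposal does not contain the argument, and the mechanisms you sketch would fail as stated. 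In (ii)(b), the hypothesis controls only the \emph{first} letters of the normal forms of $s_1,s_2$; using them ``alternately'' does not keep the concatenated word reduced, because the normal form of $s_1$ may \emph{end} in $K\smi C$ (this is forced, for instance, when $s_2=s_1^{-1}$), so an $s_1$-step followed by an $s_2$-step can cancel and the tree-distance $d_T$ need not increase. The paper's construction chooses at each H-step between $s_1$ and $s_2$ according to the factor ($H\smi C$ or $K\smi C$) in which the \emph{last} syllable of the previous label lies (looking back two steps after a V-step), and then derives a contradiction from Theorem \ref{l23} after pushing the $C$-letters leftwards; some such adaptive rule, together with that reduction argument, is what your sketch is missing.

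For (ii)(c) the quotient-and-lift plan does not go through as described. In $\Ga/C=(H/C)*(K/C)$ the image of $s$ is trivial, so the induced generating set is only $\{\bar s_1,\bar s_2\}$, and these images need not lie in the factors $H/C$, $K/C$ (e.g.\ $s_1$ may have normal form $h_1k_1h_2$); hence the (ii)(a) bag-and-bridge construction is not available in the quotient, and the Fibonacci count must come from mixing quotient moves with fibre moves, which is what has to be justified. More importantly, ``preserves self-avoidance'' begs the question: the projected walk is self-avoiding only if the relevant words are nontrivial modulo $C$, i.e.\ one must prove that $s_1^j\notin C$ (when $s_1s_2=\id$) and $(s_1s_2)^j,(s_1s_2)^{j-1}s_1,(s_2s_1)^{j-1}s_2\notin C$ (when all generators are involutions). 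This is the content of the paper's Lemma \ref{lem:tt}, proved by expressing some $t_1t_2$ with $t_1\in H\smi C$, $t_2\in K\smi C$ as a word in $S$, using normality of $C$ to strip the $s$'s, and comparing lengths $l(\wt t^{\,n})=2n$; nothing in your proposal supplies this, and without it the projected walk could wind around a finite cycle and re-enter fibres. Two smaller omissions in (c): you need $|S\cap C|=1$ (if $|S\cap C|=2$, normality forces $\Ga$ to be finite), and the subcase where the normal forms of $s_1,s_2$ begin on opposite sides should be routed back through (ii)(b), as the paper does.
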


\begin{proof}[Proof of Theorem \ref{moe}(i)]
Let $H=\langle S \mid R\rangle$, and 
let $h:\Gamma\to\ZZ$ be the unique function satisfying  $h(\id)=0$ and, for $\g\in \Gamma$,
\begin{align*}
h( \g t)-h(\g) &=1,\\
h(\g s)-h(\g)&=0.\qq  s\in S.
\end{align*}
By \cite[Thm 4.1]{GL-Cayley} applied to the unit vector
in $\ZZ^{S\cup\{t\}}$ with $1$ in the entry labelled $t$, 
$h$ is a group height function (and hence a transitive \ghf) on any locally finite Cayley graph of $\Gamma$. 
When $G$ is cubic, the inequality $\mu(G)\ge \phi$ follows by 
Theorem \ref{thm:1}(b).
\end{proof}

We turn to the proof of Theorem \ref{moe}(ii). 
By \cite[Thm 1]{GL-Comb}, 
we have $\mu(G)\geq \sqrt{3}>\phi$ if the generator-set $S$
satisfies $|S|\ge 4$. 
We may, therefore, assume henceforth that $|S|=3$.
It is straightforward to check the following lemma.

\begin{lemma}\label{l94}
Let $\Gamma$ be a finitely generated group with two or more ends. 
Let $S=\{s,s_1,s_2\}$ be a generator set whose
Cayley graph $G$ is cubic. Then,
subject to possible permutation of the generators, exactly one of the following holds.
\begin{Alist}
\item $s^2=s_1^2=s_2^2=\id$.
\item $s^2=s_1s_2=\id$.
\end{Alist}
\end{lemma}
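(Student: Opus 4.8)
The plan is to note that the statement reduces to a combinatorial fact about the set $S$ under inversion, so I would first distil the hypotheses to their essential content. By the standing conventions of this section, $S$ is a finite symmetric generating set with $\id\notin S$. For such an $S$, every vertex $v$ of the Cayley graph $G$ has neighbour-set $vS=\{vs:s\in S\}$, which has exactly $|S|$ elements (left translation by $v$ is a bijection) and none equal to $v$ (since $\id\notin S$); hence $G$ is $|S|$-regular, and the hypothesis that $G$ is cubic is equivalent to $|S|=3$. In particular $s$, $s_1$, $s_2$ are pairwise distinct. It therefore suffices to prove the following: if $S=\{s,s_1,s_2\}$ consists of three distinct elements of $\Ga\setminus\{\id\}$ and $S=S^{-1}$, then, after a suitable permutation of $s,s_1,s_2$, exactly one of the two cases A, B holds. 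The hypothesis that $\Ga$ has two or more ends plays no role in this; it merely records the context in which the lemma is applied.

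Next I would examine the inversion map $\iota\colon S\to S$, $\iota(x)=x^{-1}$. It is well defined and bijective since $S=S^{-1}$, and $\iota^2=\mathrm{id}$, so $\iota$ is an involution of a three-element set; consequently $\iota$ is either the identity permutation or a single transposition. If $\iota=\mathrm{id}_S$, then $x=x^{-1}$ for every $x\in S$, i.e.\ $s^2=s_1^2=s_2^2=\id$, which is case A (and needs no relabelling, the case being symmetric in the three generators). If instead $\iota$ is a transposition, relabel $s,s_1,s_2$ so that $\iota$ fixes $s$ and interchanges $s_1$ with $s_2$; then $s^2=\id$ and $s_2=s_1^{-1}$, that is, $s_1s_2=\id$, which is case B. Since these are the only two possibilities for $\iota$, at least one of A, B holds after a suitable permutation.

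Finally I would verify the exclusivity asserted by the word ``exactly''. In case B we have $s_1s_2=\id$ with $s_1\ne s_2$, so $s_1^{-1}=s_2\ne s_1$ and hence $s_1^2\ne\id$; therefore no labelling of $S$ can also realise case A. Conversely, if case A held then all three generators would be involutions, and two distinct involutions cannot satisfy $xy=\id$ (that would force $x=y^{-1}=y$), so case B is excluded. Hence exactly one of A, B occurs, as required. The whole argument is elementary: the only points deserving any care are the identification of ``cubic'' with $|S|=3$ and the bookkeeping in the transposition case, and neither is a genuine obstacle.
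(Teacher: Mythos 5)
Your proof is correct, and it supplies exactly the routine verification that the paper omits (the lemma is stated there with only the remark that it is ``straightforward to check''): the inversion involution on a three-element symmetric set is either the identity or a transposition, giving cases A and B respectively, with distinctness of the generators ruling out any overlap. Nothing further is needed; the two-or-more-ends hypothesis indeed plays no role in this particular lemma.
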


\begin{proof}[Proof of Theorem \ref{moe}(ii)(a) when $|S|=3$]
When $C=\{\id\}$, $\Ga$ is the free product of $H$ and $K$. 
In this case, $S\cap H$ (\resp, $S \cap K$) generates $H$ (\resp, $K$).
To see this, let $h \in H \smi \{\id\}$. If $h$ is not a word in the alphabet $S \cap H$, 
then it has a shortest representation as a product (of elements in $S$) including at least one
element of $S \cap H$ and one of $S \cap K$. This contradicts Theorem \ref{l23}(a).

Since $S$ is symmetric, without loss of generality we may write $S=\{s,s_1,s_2\}$ with $s\in H$, 
$s_1,s_2\in K$, and either A or B of Lemma \ref{l94} holds.
It suffices to construct  an injection from $\WW_n$ into the set of $n$-step SAWs on $G$ starting from $\id$.  
Let $\langle x,y\rangle$ be an edge of $G$,
so that $x^{-1}y\in S$. When endowed with an orientation, 
the now  oriented edge $[x,y\rangle$
is said to be \emph{labelled} by the generator $x^{-1}y$.

\noindent
{\bf Assume A of Lemma \ref{l94} holds.} Let $w \in \WW_n$, and construct
a SAW $\pi=(\pi_0,\pi_1,\dots,\pi_n)$ on $G$ as follows. We set $\pi_0=\id$, $\pi_1=s$, and
we iterate the following construction for $k=2,3,\dots,n$.
\begin{numlist}
\item If $w_k= $ V, the $k$th edge of $\pi$ is that labelled $s_2$.

\item If $(w_{k-1}w_k) = $ (HH), the $k$th edge is that labelled $s$ (\resp, $s_1$) if the $(k-1)$th edge
is labelled $s_1$ (\resp, $s$).

\item  If $(w_{k-1}w_k) = $ (VH), the $k$th edge is that labelled $s$.
\end{numlist}
The outcome may be expressed in the form $\pi=p_1s_2p_2s_2\cdots$ where each 
$p_i=ss_1ss_1\cdots$ is
a word of alternating $s$ and $s_1$. 
The letters in $\pi$ alternate between the two sets
$H\smi \{\id\}$ and $K\smi \{\id\}$
with the possible exception of isolated appearances of $s_1s_2$,
each of which is in $K \smi \{\id\}$. Now $s_1s_2\ne \id$, whence $s_1s_2\in K\smi \{\id\}$.
By Theorem \ref{l23}(a), the map $w \mapsto \pi$ is an injection.

\noindent
{\bf Assume B of Lemma \ref{l94} holds.}
Let $G_n$ be the Cayley graph of $\ZZ_2\,*\,\ZZ_n$ for $3\le n < \oo$.
We have that $H\cong \ZZ_2$, and the Cayley graph of $K$ is a cycle of length at least $4$. Therefore, $G$ is isomorphic to $G_n$
for some $n \ge 4$. 
The exact value $\mu(G)$ may be deduced from \cite[Thm 3.3]{Gilch}, 
but it suffices here to note that
\begin{equation}\label{eq:fisher3}
\mu(G)\geq \mu(G_3)>\phi.
\end{equation}
The above strict inequality holds since $G_3$ is the graph obtained from the cubic tree by the Fisher 
transformation of \cite{GrL2} (see 
item H of Section \ref{sec:exs}).
\end{proof}

\begin{proof} [Proof of Theorem \ref{moe}(ii)(b) when $|S|=3$]
Let $\Gamma$, $G$, $S$, $s_1$, $s_2$ be as given, and $s \in S\cap C$. We  may write $S=\{s,s_1,s_2\}$, and either A or B of Lemma \ref{l94} holds.
By assumption 2, we have that $s_1,s_2 \notin C$;
since $s^{-1}\in C$, it follows that $s^2=\id$.

Under A, the normal form of $s_1$ 
(\resp, $s_2$) begins and ends with elements of $H \smi C$ (\resp, $K\smi C$).
Under B, the normal form of $s_1$ 
(\resp, $s_2$) ends with an element of $K \smi C$ (\resp, $H\smi C$).
 
Let $w\in \WW_n$, and construct a SAW $\pi$ on $G$ as follows.
We set $\pi_0=\id$, $\pi_1=s_1$, and we iterate the following for $k=2,3,\dots,n$.
\begin{numlist}
\item Suppose $w_k= $ V. The $k$th edge of $\pi$ is that labelled $s$.

\item Suppose $(w_{k-1}w_k) = $ (HH).
The $k$th edge is that labelled $s_1$ (\resp, $s_2$) if the $(k-1)$th edge
is labelled by the member of $\{s_1,s_2\}$ whose normal form 
ends with an element of $K\setminus C$ (\resp, $H\setminus C$).

\item  Suppose $(w_{k-1}w_k) = $ (VH).
The $k$th edge is that labelled $s_1$ (\resp, $s_2$) if the $(k-2)$th step 
is labelled  by the member of $\{s_1,s_2\}$ whose normal form ends with an 
element of $K\setminus C$ (\resp, $H\setminus C$).
\end{numlist}
We claim that the resulting $\pi$ is a SAW. If not, there exists a representation of
the identity of the form 
\begin{equation}\label{eq:identity}
\id = p_1sp_2s \cdots s p_r,
\end{equation}
where $r \ge 1$, and each $p_i$ is a non-empty alternating product of elements of $H\smi C$ and $K\smi C$
such that $p_1p_2\cdots p_r$ is such a product also, with some aggregate length $L\ge 1$ 
(we allow also that $p_1$ and/or $p_r$ may equal $\id$). 
Each $s$ in \eqref{eq:identity} lies in $C$, and we may move these
members of $C$ to the left end of the product by the following procedure. 
Consider a consecutive pair of elements in \eqref{eq:identity}
of the form $p_i s$, with $p_i\in H\smi C$ (\resp, $p_i\in  K\smi C$).
Now, $p_i s$ lies in some right coset of $C$ in $H$ (\resp, in $K$),
whence $p_i s = c_i a_i$ for some $c_i\in C$ and  $a_i \in A$ 
(\resp, $a_i \in B$), where we have used the 
notation of Theorem \ref{l23}(b).  Replacing $p_i s$ by $c_ia_i$,
and  iterating this procedure, we obtain a normal form
$c' v_1v_2\cdots v_L$, which cannot equal the identity since $L \ge 1$.  
This contradicts \eqref{eq:identity}, and the claim of part (b) follows. 
\end{proof}

\begin{proof}[Proof of Theorem \ref{moe}(ii)(c) when $|S|=3$]
We write $S=\{s,s_1,s_2\}$. 
Clearly, $|S\cap C|\leq 2$, since $C$ is a 
proper subgroup of both $H$ and $K$.

Assume that $|S\cap C|=2$, and let $\{s_1,s_2\}=S\cap C$ and $\{s\}=S\setminus C$. Since $s^{-1}\notin C$, it follows
by Lemma \ref{l94} that $s^2=\id$. 
Since $C$ is a normal subgroup of both $H$ and $K$, we have
by Theorem \ref{l23} that $\a C\a^{-1}=C$ for $\a\in \Ga$.
Since $S$ generates $\Ga$, every $g\in \Ga$ may be expressed as a word in the alphabet $\{s,s_1,s_2\}$,
and hence in the form $g=c_1 s c_2 s\cdots s c_r$ with $c_i\in C$. By the normality of $C$,
$g=cs^k$ for some $c \in C$, $k \in \NN$. However, $s^2=\id$,
so that there are only finitely many choices for $g$, a contradiction.

Therefore, we have $|S\cap C|=1$, and we write $\{s\}=S\cap C$ and $\{s_1,s_2\}=S\smi C$. 
Either A or B of Lemma \ref{l94} holds.

If one of $\{s_1,s_2\}$ has a normal form starting with an element in 
$H\setminus C$, and the other has a normal form starting with an 
element in $K\setminus C$, then the claim follows  by Theorem \ref{moe}(ii)(b). 
For the remaining case, we may assume without loss of 
generality that the normal forms of both $s_1$ and $s_2$ 
start with elements in $H \smi C$.  It follows that, under either A and B, 
both normal forms end in $H\smi C$.

Here is an intermediate lemma, proved later in this section.

\begin{lemma}\label{lem:tt}
For $j\in \NN$,
\begin{equation}\label{eq:claim2}
\begin{aligned}
\text{if A holds,}&\q (s_1s_2)^j, (s_1s_2)^{j-1}s_1, (s_2s_1)^{j-1} s_2\notin C,\\
\text{if B holds,}&\q s_1^j\notin C.
\end{aligned}
\end{equation}
\end{lemma}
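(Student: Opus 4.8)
The plan is to map $\Ga$ onto an ordinary free product and thereby reduce the statement to elementary facts about the infinite dihedral group. Since $C$ is normal in both $H$ and $K$, the quotient maps $H\to H/C$ and $K\to K/C$ agree on $C$ (each kills it), and hence induce a surjective homomorphism
$$
\theta:\Ga=H*_C K\longrightarrow \ol H*\ol K,\qq \ol H:=H/C,\ \ol K:=K/C.
$$
I would first record that $\ker\theta=C$. By the reduced form of Theorem \ref{l23}(a), any $g\in\Ga$ may be written $g=cv_1\cdots v_n$ with the $v_i$ alternating between $H\smi C$ and $K\smi C$, so $\theta(g)=\ol v_1\cdots\ol v_n$, where the $\ol v_i$ are nontrivial and alternate between $\ol H$ and $\ol K$; this is a reduced word of length $n$ in $\ol H*\ol K$, so $\theta(g)=\id$ if and only if $n=0$, that is, if and only if $g\in C$. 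Consequently it suffices to prove that $\theta$ maps each of the listed elements to a nontrivial element. I would also note that $\ol H$ and $\ol K$ are both nontrivial, since $C$ is a proper subgroup of each, so $\ol H*\ol K$ is infinite and non-abelian (if $p\in\ol H\smi\{\id\}$ and $q\in\ol K\smi\{\id\}$ then $pq\ne qp$, by uniqueness of reduced forms), and in particular non-cyclic.

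Write $a:=\theta(s_1)$ and $b:=\theta(s_2)$, and recall $\theta(s)=\id$ because $s\in C$; since $S$ generates $\Ga$ and $\theta$ is onto, $\{a,b\}$ generates $\ol H*\ol K$. Case B of Lemma \ref{l94} cannot in fact arise under the present hypotheses: it would give $s_2=s_1^{-1}$, hence $b=a^{-1}$ and $\ol H*\ol K=\langle a\rangle$ would be cyclic, a contradiction; so in that case there is nothing to prove. Assume, then, that case A holds, so that $s,s_1,s_2$ are involutions and $a^2=b^2=\id$. Then $\langle a,b\rangle=\ol H*\ol K$ is an infinite group generated by two involutions, hence infinite dihedral, and in particular $ab$ has infinite order. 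Composing $\theta$ with the parity homomorphism $\ol H*\ol K\to\ZZ_2$ sending both $a$ and $b$ to the nontrivial element, one sees that any product of an odd number of factors from $\{a,b\}$ is nontrivial; hence $(ab)^{j-1}a$ and $(ba)^{j-1}b$ are nontrivial for every $j\ge1$, while $(ab)^j\ne\id$ for $j\ge1$ because $ab$ has infinite order. Pulling these back through $\theta$ shows that $(s_1s_2)^j$, $(s_1s_2)^{j-1}s_1$ and $(s_2s_1)^{j-1}s_2$ all lie outside $C=\ker\theta$, which is the assertion of the lemma.

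I expect the only point needing genuine care to be the identification $\ker\theta=C$: this is precisely what converts the claim into the routine dihedral bookkeeping above, and it rests on the reduced form theorem together with the normality of $C$ in both factors. A secondary, easily overlooked, point is the observation that case B of Lemma \ref{l94} is incompatible with the standing assumption that the normal forms of both $s_1$ and $s_2$ begin with an element of $H\smi C$, so that in practice only case A carries content.
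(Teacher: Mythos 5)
Your proof is correct, but it takes a genuinely different route from the paper's. The paper works combinatorially inside $\Ga$ itself: it picks $t_1\in H\smi C$, $t_2\in K\smi C$, writes $t_1t_2$ as a word in $S$, uses normality of $C$ in $H$ and $K$ to push all occurrences of $s$ to the left, and then compares lengths of reduced forms (so that $l(\wt t^{\,n})=2n$) to exclude membership in $C$, with a separate conjugation step to dispose of the odd words $(s_1s_2)^{j-1}s_1$ and $(s_2s_1)^{j-1}s_2$. You instead pass to the quotient: since $C$ is normal in both $H$ and $K$ it is normal in all of $\Ga$, the induced surjection $\theta:\Ga\to(H/C)*(K/C)$ has kernel exactly $C$ (your reduced-form argument for this is just the free-product case of Theorem \ref{l23}(a) and is sound), and under A the image is an infinite group generated by the two involutions $\theta(s_1),\theta(s_2)$, hence infinite dihedral, where all three assertions are immediate. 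This is shorter and more conceptual, it sidesteps entirely the question of where the normal forms of $s_1,s_2$ begin and end (recall they need not lie in $H\cup K$), and it yields an observation the paper does not make: under the hypotheses of Theorem \ref{moe}(c), case B of Lemma \ref{l94} cannot occur at all, since it would force $\Ga/C\cong (H/C)*(K/C)$ to be cyclic while a free product of two nontrivial groups is non-abelian; thus the case-B half of the lemma is vacuously true and the paper's case-B computation is redundant. One small correction to your closing remark: the impossibility of B is not a consequence of the standing assumption that the normal forms of $s_1$ and $s_2$ begin in $H\smi C$ (an element whose normal form begins and ends in $H\smi C$ has an inverse with the same property, so that assumption by itself is compatible with $s_2=s_1^{-1}$); the correct reason is precisely the cyclicity argument in the body of your proof, which uses that $S$ generates $\Ga$ and that $C$ is a proper subgroup of both factors.
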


We shall construct an injection from  the set
$\WW_n$ into the set of $n$-step SAWs on $G$ from $\id$. 
For $w\in \WW_n$, we construct a SAW $\pi$ on $G$ with $\pi_0=\id$, $\pi_1=s_1$ as follows.
\begin{numlist}
\item Each letter V in $w$ corresponds to an edge in $\pi$ with label $s$.
\item Assume A holds. 
The letters H in $w$ correspond to the 
elements of the sequence $(s_1,s_2,s_1,s_2,\dots)$,
in order. That is, for $k\ge 1$,
the $(2k-1)$th (\resp, $(2k)$th) occurrence of H
corresponds to $s_1$ (\resp, $s_2$).
 
\item Assume B holds. The letters H in $w$ correspond to edges labelled $s_1$.
\end{numlist}
We show next that the resulting walks are self-avoiding.

\noindent
{\bf Assume B holds.} 
If one of the corresponding walks fails to be self-avoiding, there exists a representation of the identity as
\begin{equation}\label{eq:1234}
\id = s_1^{k_1}ss_1^{k_2}s \cdots s s_1^{k_r},
\end{equation}
where $r \ge 1$, $k_1,k_r \in \NN\cup\{0\}$, $k_i \in \NN$ for $2\le i < r$,
and $K=k_1+\dots+k_r\ge 1$.
Since $C$ is normal,  we have $\id = c s_1^K$ for some $c\in C$.
This contradicts \eqref{eq:claim2}, and we deduce that each such $\pi$ is self-avoiding.

\noindent
{\bf Assume A holds.}
The above argument remains valid with adjusted \eqref{eq:1234}, and yields that 
$\id = c t$ for some $c\in C$ and 
$$
t \in \bigl\{ (s_1s_2)^j, (s_2s_1)^j,(s_1s_2)^{j-1} s_1,  (s_2s_1)^{j-1} s_2 : j \in \NN\bigr\}.
$$
Therefore, $t=c^{-1}\in C$, in contradiction of \eqref{eq:claim2}.
We deduce that each such $\pi$ is self-avoiding.
\end{proof}

\begin{proof}[Proof of Lemma \ref{lem:tt}]
Let $t_1\in H\setminus C$ and $t_2\in K\setminus C$, 
so that  
\begin{equation*}
l([t_1t_2]^n)=2n, \qq n \in \NN.
\end{equation*}
Since $S$ generates $H*_C K$, we can express $t_1t_2$ as a word in
the alphabet $\{s,s_1,s_2\}$, denoted $t(s,s_1,s_2)$.
Let $\wtilde{t}$ be the word obtained from $t(s,s_1,s_2)$ by 
removing all occurrences of $s$ and using the group relations 
on $S$ to reduce the outcome to a minimal form. 
More precisely, since $s\in C$ and $C$ is normal in $H$ and $K$, every occurrence of $s$ in $t(s,s_1,s_2)$
may be moved leftwards to obtain $t(s,s_1,s_2)=ct'(s_1,s_2)$ for some $c\in C$ and some 
word $t'(s_1,s_2)$. On reducing $t'$ by the group relations
on $S$, we obtain $\wtilde t$, and note that
\begin{equation}\label{eq:new777}
\begin{aligned}
\text{if A holds,}\q&
\text{$\wt t$ is an alternating product of $s_1$ and $s_2$.}\\
\text{if B holds,}\q&\text{$\wt t \in \{s_1^k,s_2^k: k \in \NN\}$.}
\end{aligned}
\end{equation}
Since $\wtilde{t}=c^{-1}t_1t_2$, we have $l(\wtilde{t})=\ell(t_1t_2)=2$.
By the normality of $C$ again, there exists $c'\in C$ such that
$l((\wtilde t)^n) =l(c'[t_1t_2]^n) =2n$.
In particular, by Theorem \ref{l23}(a),
\begin{equation}\label{lt}
(\wtilde t)^n\notin C, \qq  n \in\NN.
\end{equation}

\noindent
{\bf Suppose A holds.} By \eqref{eq:new777},
\begin{equation}
\wtilde t\in
\bigl\{(s_1s_2)^k, (s_2s_1)^k,(s_1s_2)^{k-1} s_1,(s_2s_1)^{k-1} s_2
: k\in \NN\bigr\}.
\label{s1s2}
\end{equation}
If $\wtilde t \in\{ (s_1s_2)^{k-1} s_1, (s_2s_1)^{k-1} s_2: k \in \NN\}$, 
we have $(\wtilde{t})^2=1$, which contradicts \eqref{lt}. Therefore,
\begin{equation*}
\wtilde t\in
\bigl\{(s_1s_2)^k, (s_2s_1)^k: k\in \NN\bigr\}.
\end{equation*}
If $(s_1s_2)^j\in C$ for some $j\in\NN$, then 
\begin{equation*}
(\wtilde t)^j\in \bigl\{ [(s_1s_2)^j]^k, [(s_2s_1)^j]^k: k \in \NN\bigr\}\subseteq C,
\end{equation*}
which contradicts \eqref{lt}. Hence $(s_1s_2)^j\notin C$ for $j\in\NN$, as required. 
Suppose next that $c:=(s_1s_2)^{j-1}s_1\in C$ for some $j \in\NN$.
Since $C$ is a normal subgroup of both $H$ and $K$, we have $s_2cs_2^{-1}=s_2(s_1s_2)^{j}\in C$. 
Therefore, $(s_1s_2)^{2j}\in C$, which contradicts \eqref{lt} as above.
A similar argument holds for the case $c:=(s_2s_1)^{j-1}s_2$.
The first statement of \eqref{eq:claim2} is proved.

\noindent
{\bf Suppose B holds.} A similar argument is valid
by \eqref{eq:new777}, as follows. 
Suppose $\wtilde t=s_1^k$ (a similar argument holds in the other case, using the fact
that $s_1s_2=\id$).
If $s_1^j\in C$ for some $j\in \NN$, then 
$(\wtilde t)^j=(s_1^j)^k\in C$,
in contradiction of \eqref{lt}. The second statement of \eqref{eq:claim2} follows.
\end{proof}

\subsection{Proof of Theorem \ref{thm:minimal}}\label{sec:pf-infty}

Since $\Ga$ has infinitely many ends, any generator set $S$ has cardinality 3 or more. 
In particular, $m \ge 3$. Part (a) follows by \eqref{eq:lowerbound}.
Part (b) follows by Theorem \ref{moe}(i). 
We turn to part (c),
and assume henceforth that $\Ga=H*_C K$ is an amalgamated product as in Section \ref{sec:mult}.
  
Let $S\subseteq H\cup K$ be a generator set of $\Ga$ with $|S|=3$. 
We may assume $S$ is minimal, as follows. If $S$ is not minimal, it has
a proper subset which is a generator set, in contradiction of the fact that $|S|=m=3$.
 
Since $C$ is a proper subset of both $H$ and $K$, there exist
$s_1\in S\cap (H\setminus C)$ and $s_2\in S\cap (K\setminus C)$. 
Let $s\in S\smi\{s_1,s_2\}$ and, without loss of generality, assume $s\in H$. 
If $s\in C$, the inequality $\mu\ge\phi$  follows by Theorem \ref{moe}(ii)(b). 
We may, therefore, assume henceforth that $s\in H\setminus C$, 
so that 
\begin{equation}\label{eq:hk}
s,s_1\in H\smi C, \q s_2\in K\smi C.
\end{equation}
By Lemma \ref{l94}, 
one of the following occurs.
\begin{Alist}
\item $s^2=s_1^2=s_2^2=\id$.
\item $s_2^2=ss_1=\id$.
\end{Alist}

\noindent
{\bf Assume A holds}.
\begin{letlist}
\item
Suppose $ss_1\in C$. 
If $ss_1=s_1s$, then $S':=\{ss_1,s_1,s_2\}$ is a minimal generator set 
with Cayley graph $G'$ satisfying $\mu(G')\ge\phi$ by Theorem \ref{moe}(ii)(b). 

Suppose $ss_1\ne s_1s$, and let $\om$ be the order of $ss_1$, that is, the least $k$ such that $(ss_1)^k=\id$.
Since $ss_1\ne s_1s$, we have that $\om \ge 3$, and hence
\begin{equation}\label{eq:new3}
\begin{aligned}
s,ss_1s,ss_1ss_1s &\text{ are distinct elements of } H \smi C, \text{ and}\\
s_1,s_1ss_1, s_1ss_1ss_1 &\text{ are distinct elements of } H\smi C.
\end{aligned}
\end{equation}
Let $\Pi$ be the set of finite labelled walks on the Cayley graph $G$ of $S$ starting at $\id$ and satisfying:
\begin{numlist}
\item the first edge is labelled $s_2$,
\item between any two consecutive appearances of edges 
labelled $s_2$, there appears one of the six words
in \eqref{eq:new3}, and nothing further,
\item after the final appearance of $s_2$, there appears one of the words in \eqref{eq:new3}.
\end{numlist} 

We claim that members of $\Pi$ are SAWs on $G$, and we prove this next.
A walk $\pi\in \Pi$ is a word in the alphabet $S$ with the form
$\pi=s_2a_1s_2a_2s_2\cdots s_2a_r$ where $r \ge 1$ and $a_1,a_2,\dots,a_r\in H\smi C$.
Now, $\pi$ is a SAW if and only if no subword of $\pi$ equals the identity $\id$.
Any subword containing the letter $s_2$ has length (in the sense of Theorem \ref{l23}) at least one, 
and is therefore not the identity.  Let $\nu$ be a subword not containing $s_2$,
so that $\nu$ is a subword of some $a_i$. By inspection of \eqref{eq:new3}, and the fact that $ss_1$ has
order three or more,  we have that $\nu\ne \id$.

The generating function \eqref{eq:genfn} corresponding to
the set $\Pi$ is
$$
Z(\zeta) = \sum_{k=1}^\oo f(\zeta)^k \q\text{where} \q f(\zeta)=\zeta(2\zeta+2\zeta^3+2\zeta^5).
$$
By a simple calculation, $f(1/\phi)>1$, whence
\begin{equation}\label{eq:new6}
Z(1/\phi) =\oo.
\end{equation}
Therefore, $\mu(G)  \ge \phi$.

\item
Suppose $ss_1 \in H\setminus C$. We construct an 
injection from $\WW_n$ into the set of $n$-step SAWs on $G$ from $\id$
as follows.
Let $w \in \WW_n$, and let $\pi$ denote  the following walk on $G$.  Set $\pi_0=\id$, $\pi_1=s_2$.
\begin{numlist}
\item
At each occurrence of V in $w$, $\pi$ traverses the edge labelled $s_1$.
\item
Any run of the form H$^r$ in $w$ corresponds to a walk $s_2,s_2s,s_2ss_2,s_2ss_2s,\dots$ of 
length $r$ in $\pi$.
\end{numlist}
The resulting $\pi$ traverses the  edges of $G$ in the manner of a word of the form
$\a=(a_1s_1a_2s_1\cdots s_1a_r)$ where each $a_i$ is a word starting with $s_2$ and 
alternating $s$ and $s_2$
(we allow $a_r$ to be empty). By \eqref{eq:hk}, each $a_i$ is in the reduced form of Theorem \ref{l23}(a).
At each occurrence of $s_1$ in $\a$, there may be a consecutive appearance of generators in $H \smi C$ 
taking the form $ss_1$. 
At each such instance, we may group $ss_1$ as a single element of $H \smi C$, thus 
obtaining a reduced form for $\a$.

If $\pi$ is not self-avoiding, some non-trivial subword of $\a$ equals the identity $\id$. By
Theorem \ref{l23}, this subword must have length $0$, which cannot occur. Therefore, $\pi$ is a SAW.
\end{letlist}

\noindent
{\bf Assume B holds}, so that $s_1^{-1}=s$.
If $C=\{\id\}$,  the claim follows by Theorem \ref{moe}(ii)(a). Assume $C\neq\{\id\}$.
Consider the minimal generator set $\wtilde{S}=\{s_2,u:=s_2s_1,v:=ss_2\}$
with corresponding Cayley graph $\wtilde{G}$. 
We  construct an injection $f$ 
from $\WW_n$ into the set of $n$-step SAWs on 
$\wtilde{G}$ from $\id$, as follows. Let $w\in\WW_n$
and construct $\pi=f(w)$ as follows. Set $\pi_0=\id$, $\pi_1=u$,
and let $k \ge 2$.
\begin{numlist}

\item If $w_k=$ V, the $k$th edge
of $\pi$  is labelled $s_2$.

\item If $w_k=$ H, the $k$th edge of $\pi$ lies in $\{u,v\}$.

\begin{romlist}
\item If $(w_{k-1}w_k)=$ (HH), the $k$th edge of $\pi$ has the same 
label as the $(k-1)$th.

\item If $(w_{k-1}w_k)=$ (VH), the $k$th edge
of $\pi$ is labelled as the inverse of that of the $(k-2)$th.
\end{romlist}
\end{numlist}
The resulting $\pi$ has the form of the word 
\begin{equation}\label{eq:new778}
\a=(u^{k_1}s_2 v^{k_2}s_2\cdots s_2 x^{k_r})
\end{equation}
where $r\ge 2$,
$k_i\ge 1$ (we allow $k_r=0$), the powers of $u$ and $v$ alternate,  and $x \in \{u,v\}$ as appropriate.
The walk $\pi$ is the image $\pi=f(w)$ where 
$w=$ H$^{k_1}$VH$^{k_2}$V$\cdots $VH$^{k_r}$.
By considering the various possibilities (as follows), 
we obtain that every non-trivial subword
of $\a$ has non-zero length, and hence $\pi$ is a SAW.  

Here is a brief amplification of the last stage. Suppose
the walk $\pi$ contains some cycle. Then
the word $\a$
of \eqref{eq:new778} contains a subword of the form 
$\beta=t_1^{l_1}s_2t_2^{l_2}s_2 \cdots s_2 x^{l_m}$
that satisfies $\beta=\id$,
where $m \ge 2$, $l_i>0$ (we allow $l_1=0$ and $l_m=0$,
but not both),
$\{t_1,t_2\}=\{u,v\}$ and the powers of $u$ and $v$ alternate, 
and $x\in\{u,v\}$ is 
chosen accordingly. The only
cancellations that can arise in $\beta$ from the group relations on
$S$ (under case B) are of the form $s_2s_2=\id$.
Such a product appears only where either (i) $\beta$ ends with the sequence
$vs_2$ (so that $l_m=0$), or (ii) some $s_2$ is preceded by $v$ 
and followed
by $u$, thus forming the subsequence $vs_2u$. 
At each such occurrence, exactly one cancellation occurs.
The resulting word $\beta'$ (after such cancellations)  
is an alternating product of terms in $H\smi C$ and $K \smi C$,
with strictly positive length. 
(For example, if $\beta=v^{t_1}s_2 u^{t_2}s_2 v^{k_3}s_2$
where $t_1,t_2,t_3>0$, then 
$\beta'=v^{t_1-1}ss_2s_1 u^{t_2-1}s_2v^{t_3-1}s$,
and the last
product, when expanded in terms of the generators $s$, $s_1$, $s_2$,
is in reduced form.) By Theorem \ref{l23}(a),
$\be'\ne\id$, a contradiction. We conclude that $\pi$ is a SAW. 
The proof of part (c) is complete.

Finally, we prove part (d) of the theorem.
If $\Ga$ has a minimal  generator set $S$ satisfying $|S|\geq 4$, the 
corresponding Cayley graph $G$ satisfies $\mu(G)\geq \sqrt{3}>\phi$
by \eqref{eq:lowerbound}.
We may, therefore, assume that \emph{every} minimal generator set of $\Ga$ has cardinality $3$.

By considering the reduced form of Theorem \ref{l23}(a), 
we may find some generator set $S$ satisfying $S\subseteq H\cup K$,  
and, by passing to subsets if necessary, we may assume $S$ is minimal.
By the above, $|S|=3$.
By part (c), there is a minimal generator set $S'
\subseteq H\cup K$ whose Cayley graph $G'$ has $\mu(G')\ge \phi$. 

\begin{remark}\label{rem:10}
Theorem \ref{thm:minimal}(c) falls short of the assertion that $\mu\ge\phi$
\emph{for all} Cayley graphs of amalgamated products with three generators.
There are nevertheless some partial results in this direction. Let $S=\{s,s_1,s_2\}$ 
be a generator set satisfying \eqref{eq:hk}, and let $G$ be the corresponding
Cayley graph. If A holds, and either $ss_1\notin C$, or $ss_1\in C$ and $ss_1\ne s_1s$,
then $\mu(G)\ge \phi$. The proof is given above.
If B holds, one may show that $\mu(G) \ge \phi$ so long as $s_1^2 \notin C$.
The proof is  by construction of an injection from
the set $W_n$ of $n$-step
SAWs on the Cayley graph of the free product 
$\ZZ_2 * \ZZ_3=\langle a,b,b^2\mid a^2,b^3\rangle$ starting at a given vertex, into
the set $\Pi_n$ of $n$-step walks $\pi$ on $G$
with $\pi_0=\id$ and satisfying: $\pi$ can be expressed as 
a word of the form $\a=( a_1s_2a_2s_2\cdots s_2a_r)$
where each $a_i$ lies in $T:=\{s,s^2,s_1,s_1^2\}$ 
(we allow $a_1$ and $a_r$ to be empty).
The details are omitted.
\end{remark}

\section*{Acknowledgements} 
GRG thanks Omer Angel for a conversation regarding Question \ref{qn:big}.
The authors are grateful to Anton Malyshev for permission to include
his unpublished work on the Grigorchuk group and $2$-ended groups,
and to Igor Pak for introducing us to AM and his work.
GRG and ZL express their thanks to an anonymous referee for
fastidious readings of the submitted paper, and for suggestions that have led to improvements. 
This work was supported in part
by the Engineering and Physical Sciences Research Council under grant EP/I03372X/1. 
ZL acknowledges support from the Simons Foundation under  grant $\#$351813
and the National Science Foundation under grant  DMS-1608896.

\providecommand{\bysame}{\leavevmode\hbox to3em{\hrulefill}\thinspace}
\providecommand{\MR}{\relax\ifhmode\unskip\space\fi MR }
\providecommand{\MRhref}[2]{%
  \href{http://www.ams.org/mathscinet-getitem?mr=#1}{#2}
}
\providecommand{\href}[2]{#2}

\end{document}